\newcommand{\RomanNumeralCaps}[1]
    {\MakeUppercase{\romannumeral #1}}
\theoremstyle{plain} 
\newtheorem{theorem}{Theorem}[section]
\newtheorem{proposition}[theorem]{Proposition}
\newtheorem{lemma}[theorem]{Lemma}
\newtheorem{corollary}[theorem]{Corollary}
\theoremstyle{definition} \newtheorem{definition}[theorem]{Definition}
\theoremstyle{remark} \newtheorem{remark}[theorem]{Remark}
\newtheorem{notation}[theorem]{Notation}
\def\Hodge{{\rm Hodge}}
\def\deg{{\rm deg}}
\def\point{{\rm point}}
\def\Pic{{\rm Pic}}
\def\PGL{{\rm PGL}}
\def\GL{{\rm GL}}
\newcommand\blfootnote[1]{%
  \begingroup
  \renewcommand\thefootnote{}\footnote{#1}%
  \addtocounter{footnote}{-1}%
  \endgroup
}
\newcommand{\ncom}{\newcommand}
\ncom{\mylabel}[1]{{\rm (#1)}\label{#1}}
\ncom{\Hom}{{\textit{Hom}}}
\ncom{\eop}{{\hfill $\Box$}}
\begin{document}
\baselineskip=16pt

\setcounter{tocdepth}{1}

\title[Tautological classes]{Tautological algebra of the moduli stack of semistable bundles of rank \MakeLowercase{2} on a general curve}

\author[C. Gangopadhyay]{Chandranandan Gangopadhyay}
\author[J. N. Iyer]{Jaya NN  Iyer}

\author[A. Mukherjee]{Arijit Mukherjee}

\address{Department of Mathematics, Shiv Nadar University, NH91, Tehsil Dadri, Greater Noida, Uttar
Pradesh 201314, India}
\email{chandranandan.g@snu.edu.in}

\address{The Institute of Mathematical Sciences, C.I.T
Campus, Taramani, Chennai 600113, India}
\email{jniyer@imsc.res.in}

\address{Department of Mathematics, Indian Institute of Technology Madras, IIT P.O., Chennai, Tamil Nadu - 600 036, India.}
\email{mukherjee90.arijit@gmail.com}

\blfootnote{Mathematics Subject Classification 2020 : 14D20, 14D23, 14H40, 14H51, 14H60.}
\blfootnote{Key words and phrases : Moduli stacks, semistable bundles, tautological classes, Chow groups.}

\begin{abstract}
Our aim is to determine the tautological algebra generated by the cohomology classes of the Brill-Noether loci in the rational cohomology of  the moduli stack $\mathcal{U}_C(n,d)$ of semistable bundles of rank $n$ and degree $d$. We show that for a general smooth projective curve $C$ of genus $g\geq 2$, $d=2g-2$, the tautological algebra of $ \mathcal{U}_C(2,2g-2)$ (resp. the moduli stack  $\mathcal{SU}_C(2,\mathcal{L})$ of semistable bundles of rank $2$ and determinant $\mathcal{L}$ with $\deg(\mathcal{L})=2g-2$) is generated by the divisor classes (resp. the class of the Theta divisor $\Theta$).  This is previously known in rank one situation, called the (classical) Porteous formula.
\end{abstract}
\maketitle

\tableofcontents

\section{Introduction}
\label{sec:Introduction}

Given a moduli space or stack, the cohomology ring is  defined, and determining the generators and structure of the ring is a wide area of interest and study. A more manageable area of study is to look at a class of naturally defined geometric loci, and look at the $\mathbb{Q}$-subalgebra defined by their cohomology classes. This is usually called the $\textit{Tautological algebra}$. These have been investigated on Jacobian varieties (cf. \cite{ACGH}), moduli space of curves (cf. \cite{Fb}), moduli space of abelian varieties (cf. \cite{vdG}), to cite a few examples. In this paper, we would like to consider this area of study on the moduli space of vector bundles on a curve $C$.  

Suppose $C$ is a complex smooth projective curve of genus $g$. The Jacobian variety $J_d(C)$ parametrises isomorphism classes of degree $d$ line bundles on $C$. The classical Brill-Noether subvarieties $W^r_d$ of $J_d(C)$ parametrise line bundles with at least $r+1$ linearly independent sections.  The  questions on  non-emptiness, dimension, and irreducibility of the loci have been classically studied (cf. \cite[Chapter \RomanNumeralCaps{4}]{ACGH}, \cite{Gr-Ha2} and \cite{Su}).
  In other direction, the classical Poincar\'e formula expresses the cohomological classes of $W^0_i$, in terms of the Theta divisor  on $ J(C)$:
$$
[W^0_i] = \frac{1}{(g-i)!} [\Theta]^{g-i} \in H^*(J(C),\mathbb{Q}).
$$
Here we identify $J_d(C)\cong J_0(C)=J(C)$ (cf. \cite[Chapter 1, \S5; p. 25]{ACGH}).

A similar formula holds amongst the cohomology classes of  $W^r_d$, for varying $r$ in $J_d(C)$, for a general smooth curve $C$ (see Theorem \ref{dimension of BN loci for general curve}) :
\begin{equation*}\label{porteousformula}
	[W^r_d]=\prod\limits^r_{\alpha=0}\dfrac{\alpha!}{(g-d+r+\alpha)!}.[\Theta_d]^{g-\rho}.
\end{equation*}
Here $\rho=\dim(W^r_d)$.  When $\rho=0$, the above formula is known as the Castelnuovo formula or the Porteous formula, in general. 

For $n,d\geq 1$ we denote by $\mathcal{U}_C(n,d)$ the moduli stack of semistable vector bundles of rank $n$ and degree $d$.  For $\mathcal L\in {\rm Pic}(C)$ we denote by $\mathcal{SU}_C(n,\mathcal{L})$  the moduli stack of semistable vector bundles of rank $n$ and determinant $\mathcal{L}$.  Their coarse moduli spaces 
${U}_C(n,d)$ and ${SU}_C(n,\mathcal{L})$ have been widely studied. The Brill-Noether loci have been similarly defined and investigated, (cf. \cite{Bi}, \cite{Bi2}, \cite{BGN}, \cite{M1}, \cite{M2} and  \cite{Su}).  Further developments on  the subject of Brill-Noether loci can be found in \cite{La-Ne-St}, \cite{La-Ne-Pr}, \cite{La-Ne3}, \cite{La-Ne4} and \cite{La-Ne5}. However in this paper, we won't be pursuing this line of study.
 
Some questions on the cohomology classes have been raised by C. S. Seshadri  and N. Sundaram in \cite[p. 176]{Su}.  As pointed out by P. Newstead, the virtual cohomology classes can be worked out in terms of the known generators of the cohomology of moduli space when $n$ and $d$ are coprime,  using the determinantal structure of the Brill-Noether loci.  
 In general the question on finding relations amongst the classes are open, except in rank one situation. In this paper,  we illustrate a method to address this question. We consider  rank two situation, and when the degree $d=2g-2$, for any $g\geq 2$.  

Our aim in this paper is to obtain a Poincar\'e type relation amongst the cohomological classes of the geometric Brill-Noether loci, when the curve $C$ is general.  As a first step, in \cite{Mk}, when $g=1$ the relations were found to be similar to the Poincar\'e relations.
The moduli spaces ${U}_C(2, 2g-2)$ and ${SU}_C(2,\mathcal{L})$ are singular varieties, and  the cohomology class of a Brill-Noether locus is well-defined
in the cohomology of the  moduli stack $H^{*}(\mathcal{U}_C(2, 2g-2),\mathbb{Q})$ (resp.
$ H^{*}(\mathcal{SU}_C(2,\mathcal{L}),\mathbb{Q})$). See  \S \ref{CohBN}).  

We show the following:
\begin{theorem}\label{mainThm} Suppose $C$ is a general smooth projective curve of genus $g$, and $g\geq 2$.
The cohomology class of a Brill-Noether locus on the moduli stack $\mathcal{U}_C(2,2(g-1))$ can be expressed as a polynomial on the divisor classes, with rational coefficients.
\end{theorem}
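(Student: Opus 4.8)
The plan is to realise each Brill--Noether locus as a degeneracy locus attached to a universal bundle, to compute its class by Thom--Porteous together with Grothendieck--Riemann--Roch, and then to show that the resulting polynomial collapses onto the divisor classes. Fix a universal bundle $\mathcal{E}$ on $C\times\mathcal{U}$, $\mathcal{U}=\mathcal{U}_C(2,2g-2)$ (it exists on the stack), let $\pi\colon C\times\mathcal{U}\to\mathcal{U}$ be the projection, and choose an effective divisor $D\subset C$ of large degree. The hyperdirect image of $\mathcal{E}$ produces a two term complex $\varphi\colon\mathcal{A}_0\to\mathcal{A}_1$ of locally free sheaves on $\mathcal{U}$ with $\ker\varphi=\pi_*\mathcal{E}$, $\operatorname{coker}\varphi=R^1\pi_*\mathcal{E}$, and $\operatorname{rk}\mathcal{A}_0=\operatorname{rk}\mathcal{A}_1$; the last equality is the arithmetic feature of degree $2g-2$, since $\chi(E)=(2g-2)+2(1-g)=0$. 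The Brill--Noether locus $W$ of semistable bundles with $h^0\ge k$ is precisely the degeneracy locus $\{\operatorname{rk}\varphi\le\operatorname{rk}\mathcal{A}_0-k\}$. For $C$ general, the Brill--Noether dimension theorem for rank two bundles (the analogue of the statement recalled in Theorem~\ref{dimension of BN loci for general curve}) guarantees that $W$ has the expected codimension $k^2$, so the Thom--Porteous formula applies without correction and
$$[W]=\Delta\bigl(c(-\pi_!\mathcal{E})\bigr)\in H^{2k^2}(\mathcal{U},\mathbb{Q}),$$
an explicit Schur determinant in the Chern classes $c_i(\pi_!\mathcal{E})$ of the $K$-theory class $\pi_!\mathcal{E}=R^0\pi_*\mathcal{E}-R^1\pi_*\mathcal{E}$.

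Next I would compute $\operatorname{ch}(\pi_!\mathcal{E})$ by Grothendieck--Riemann--Roch, $\operatorname{ch}(\pi_!\mathcal{E})=\pi_*\bigl(\operatorname{ch}(\mathcal{E})\cdot\operatorname{td}(T_\pi)\bigr)$, where $T_\pi=p_C^*T_C$ and $\operatorname{td}(T_\pi)=1-(g-1)\eta$ with $\eta=p_C^*[\mathrm{pt}]$ satisfying $\eta^2=0$. Normalising $\mathcal{E}$ so that the point component of $c_1(\mathcal{E})$ equals $(2g-2)\eta$, the nilpotence of $\eta$ forces a substantial cancellation in the product $\operatorname{ch}(\mathcal{E})\operatorname{td}(T_\pi)$; expanding $\operatorname{ch}(\mathcal{E})$ in K\"unneth components over $H^*(C)$ and applying $\pi_*$, one obtains each $c_i(\pi_!\mathcal{E})$ as an explicit polynomial in the divisor classes of $\mathcal{U}$ together with the higher Atiyah--Bott/Newstead generators --- the degree $4$ and degree $6$ K\"unneth components of $c_2(\mathcal{E})$, and the mixed classes built from $c_1(\mathcal{E})$. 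Substituting into the Thom--Porteous determinant then writes $[W]$ as a universal polynomial in all of these classes.

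The heart of the matter is to show that, for a general curve, this polynomial actually lies in the subalgebra generated by divisor classes. First I would reduce to fixed determinant: the determinant morphism $\det\colon\mathcal{U}_C(2,2g-2)\to\operatorname{Pic}^{2g-2}(C)$ has fibres isomorphic to $\mathcal{SU}_C(2,\mathcal{L})$, and this splits the problem into a part along $\operatorname{Pic}$, where the classical Poincar\'e formula already expresses everything through a theta class, and a part along the fibre, where the only divisor class is $\Theta$. On $\mathcal{SU}_C(2,\mathcal{L})$ one has to absorb the surviving higher Newstead generators into powers of $\Theta$; for this I would combine the cancellations already imposed by the $d=2g-2$ structure in the GRR computation, the known presentation of $H^*(\mathcal{SU}_C(2,\mathcal{L}),\mathbb{Q})$ by Atiyah--Bott generators modulo the Mumford--Kirwan relations, and the genericity of $C$, which both validates the Thom--Porteous computation and yields the extra relations that make the collapse work.

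I expect this last reduction to be the main obstacle: proving that the particular combinations of the degree $\ge4$ generators produced by the Thom--Porteous determinant can be rewritten in terms of $\Theta$ (respectively, in terms of the divisor classes on $\mathcal{U}$). This is the exact analogue of the phenomenon that the Poincar\'e-type formula for $W^r_d$ on the Jacobian requires $C$ to be general as soon as $r\ge1$, and it is where essentially all of the work, and the hypothesis that $C$ be general, is concentrated.
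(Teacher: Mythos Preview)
Your approach is genuinely different from the paper's, and the step you yourself flag as the ``main obstacle'' is in fact a real gap that your outline does not close.

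The paper does \emph{not} attempt to write the Thom--Porteous class in the Atiyah--Bott/Newstead generators and then argue that the resulting polynomial collapses onto divisor classes.  Indeed, the introduction explicitly records (attributed to Newstead) that one can always write down the virtual Porteous class in the known generators, but that it is ``difficult in general to determine whether a polynomial in the generators gives a non-zero cohomology class,'' and that ``even in rank two, where the generators for the relations are well known, this is difficult.''  So the collapse you are hoping for is precisely what the paper regards as the hard part, and none of the ingredients you list --- the $d=2g-2$ cancellations in GRR, the Mumford--Kirwan presentation, or the genericity of $C$ --- is shown to effect it.  The cohomology of $\mathcal{SU}_C(2,\mathcal{L})$ is \emph{not} generated by $\Theta$ alone; it has higher-degree generators, and there is no a priori reason the Porteous determinant should avoid them.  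A second, smaller gap: you invoke a ``Brill--Noether dimension theorem for rank two bundles'' guaranteeing that $W$ has the expected codimension $k^2$ for general $C$, citing Theorem~\ref{dimension of BN loci for general curve}; but that theorem is for line bundles, and the analogous statement for rank $2$ on a general curve is not available in the form you need, so the passage from the virtual Porteous class to the actual class $[W]$ is also unjustified.

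What the paper does instead is to bypass the generator calculus entirely.  It uses the spectral-curve correspondence: a general $2$-sheeted ramified spectral cover $\pi:\widetilde{C}\to C$ gives a dominant generically finite rational map $\pi_*:J_{4(g-1)}(\widetilde{C})\dashrightarrow U_C(2,2g-2)$, under which the Brill--Noether loci on the moduli side pull back to Brill--Noether loci on $J_{4(g-1)}(\widetilde{C})$.  The crucial input is then a Hodge-theoretic statement about the Jacobian of a \emph{general} spectral double cover: by a Mumford--Tate computation (Biswas), every Hodge class on $J(\widetilde{C})$ is a polynomial in divisor classes.  Hence the Brill--Noether classes on $J_{4(g-1)}(\widetilde{C})$ are automatically polynomials in divisors, with no Porteous/GRR bookkeeping required.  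One then checks that the relevant divisor classes descend to the moduli stack (via the decomposition $J(\widetilde{C})\sim J(C)\times P$ and the Beauville--Narasimhan--Ramanan analysis of the indeterminacy locus), and the injectivity of pullback along a proper generically finite map finishes the argument.  Thus the genericity hypothesis on $C$ enters not through a rank-$2$ expected-dimension theorem, but through the structure of Hodge classes on the Jacobian of a general spectral curve.
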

See Theorem \ref{MT}.
  
Similarly, consider the moduli stack $\mathcal{SU}_C(2,\mathcal{L})$ of semistable bundles of rank $r$ and fixed determinant $\mathcal{L}$ of degree $2g-2$ on $C$.  Then we obtain the following:
\begin{corollary}
The cohomology class of a Brill-Noether locus  $\widetilde{W^{r,\mathcal{L}}_{2,2(g-1)}}$ in the moduli stack $\mathcal{SU}_C(2,\mathcal{L})$ is a polynomial expression on the class of the Theta divisor, with rational coefficients. In other words, the tautological algebra generated by the Brill-Noether loci is generated by the class of the theta divisor. 
\end{corollary}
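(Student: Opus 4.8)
The plan is to obtain the Corollary directly from Theorem~\ref{mainThm} by restricting the Poincar\'e-type relation along the closed immersion $\iota\colon \mathcal{SU}_C(2,\mathcal{L})\hookrightarrow \mathcal{U}_C(2,2(g-1))$ that fixes the determinant (the fiber of the determinant morphism to the Jacobian over the point $[\mathcal{L}]$). First I would record that the fixed-determinant Brill--Noether locus $\widetilde{W^{r,\mathcal{L}}_{2,2(g-1)}}$ is, by construction, the scheme-theoretic preimage $\iota^{-1}(\widetilde{W^{r}_{2,2(g-1)}})$: both are cut out by the same Fitting-ideal (determinantal) condition on the derived pushforward of a universal bundle, as set up in \S\ref{CohBN}, one being the restriction of the other. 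For a general curve $C$ both loci have the expected codimension --- in $\mathcal{U}_C(2,2(g-1))$ and in $\mathcal{SU}_C(2,\mathcal{L})$ respectively --- by the same genericity input that underlies Theorem~\ref{mainThm}; hence the intersection of $\widetilde{W^{r}_{2,2(g-1)}}$ with $\mathcal{SU}_C(2,\mathcal{L})$ is dimensionally transverse, the refined Gysin pullback carries no excess term, and $\iota^{*}[\widetilde{W^{r}_{2,2(g-1)}}] = [\widetilde{W^{r,\mathcal{L}}_{2,2(g-1)}}]$ in $H^{*}(\mathcal{SU}_C(2,\mathcal{L}),\mathbb{Q})$.

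Granting this, apply Theorem~\ref{mainThm} to write $[\widetilde{W^{r}_{2,2(g-1)}}] = P(D_1,\dots,D_k)$ for a polynomial $P$ with rational coefficients in divisor classes $D_1,\dots,D_k$ on $\mathcal{U}_C(2,2(g-1))$. Since $\iota^{*}$ is a ring homomorphism, $[\widetilde{W^{r,\mathcal{L}}_{2,2(g-1)}}] = P(\iota^{*}D_1,\dots,\iota^{*}D_k)$. Now each $\iota^{*}D_i$ is a divisor class on $\mathcal{SU}_C(2,\mathcal{L})$, and the Picard group of this stack is infinite cyclic (Dr\'ezet--Narasimhan), so rationally it is generated by the generalized theta class $[\Theta]$, where $\Theta=\{E: h^{0}(C,E)\neq 0\}$ is the natural theta divisor --- well defined precisely because $\chi(E)=0$ when $\deg\mathcal{L}=2g-2$, and itself nothing but $\widetilde{W^{0,\mathcal{L}}_{2,2(g-1)}}$. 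Thus $\iota^{*}D_i=c_i[\Theta]$ for suitable $c_i\in\mathbb{Q}$, and substituting back yields $[\widetilde{W^{r,\mathcal{L}}_{2,2(g-1)}}]=P(c_1[\Theta],\dots,c_k[\Theta])$, a polynomial in $[\Theta]$ with rational coefficients; that is, the tautological algebra generated by the Brill--Noether loci is generated by $[\Theta]$.

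The step I expect to be the main obstacle is the cycle-class identity $\iota^{*}[\widetilde{W^{r}_{2,2(g-1)}}] = [\widetilde{W^{r,\mathcal{L}}_{2,2(g-1)}}]$, i.e.\ ruling out an excess-intersection correction upon passing to the fixed-determinant substack. For a general curve this should reduce to the expected-codimension statement together with the standard facts that a generic determinantal (Fitting-ideal) subscheme of the expected codimension is Cohen--Macaulay and that its fundamental class is given by the Thom--Porteous formula, which is compatible with base change; alternatively, one can sidestep the issue entirely by rerunning the determinantal argument behind Theorem~\ref{mainThm} directly inside $\mathcal{SU}_C(2,\mathcal{L})$, where the rank-one Picard group forces the output to be a polynomial in $[\Theta]$ from the outset. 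In either route the Corollary ultimately rests on the classical one-dimensionality of $\mathrm{Pic}(\mathcal{SU}_C(2,\mathcal{L}))\otimes\mathbb{Q}$.
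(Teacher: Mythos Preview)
Your proposal is correct and follows essentially the same route as the paper: pull back along the inclusion $j\colon \mathcal{SU}_C(2,\mathcal{L})\hookrightarrow \mathcal{U}_C(2,2(g-1))$, invoke Theorem~\ref{mainThm} to express the class as a polynomial in divisor classes, and then use that $\mathrm{Pic}(\mathcal{SU}_C(2,\mathcal{L}))$ is generated by $\Theta$ (the paper cites Proposition~\ref{Picequiv} together with Dr\'ezet--Narasimhan for this last step). The paper in fact \emph{defines} $\widetilde{W^{r,\mathcal{L}}_{2,2(g-1)}}$ as the intersection $\widetilde{W^{r}_{2,2(g-1)}}\cap \mathcal{SU}_C(2,\mathcal{L})$ and does not pause over the cycle-class identity $j^*[\widetilde{W^{r}_{2,2(g-1)}}]=[\widetilde{W^{r,\mathcal{L}}_{2,2(g-1)}}]$ that you flag as the main obstacle; your discussion of why no excess term arises is more careful than what appears in the paper's own argument.
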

See Corollary \ref{MT2}. This is close to the Porteous formula.

The key idea is to relate the Brill-Noether loci on the moduli space with the Brill-Noether loci on the Jacobian variety of a general spectral curve. 
We utilise the rational map obtained in \cite{BNR} from the Jacobian of a general spectral curve to the moduli space ${U}_C(2,2(g-1))$. Another aspect is to note that the Hodge conjecture holds for the Jacobian of a general two sheeted spectral curve, via a computation of the Mumford-Tate group (cf.  \cite{B}). 
In particular, we note:
\begin{theorem}\label{JSpecthm}
	The tautological algebra of the Jacobian  
	$J(\widetilde{C})$ of a general $2$-sheeted spectral curve $\pi:\widetilde{C}\rightarrow C$ is generated by the divisor classes.
\end{theorem}
See Theorem \ref{Bis}. Note that this is primarily the reason we deal with the rank $2$ case. For the general case, we need the same result for $n$-sheeted covers.

We also use the fact that the moduli stacks are quotient stacks (cf. \cite{Gomez}). Hence, their cohomology is the equivariant cohomology of  (an open subset of) Quot scheme. The Brill-Noether loci are equivariant for the group action $\GL(N)$, see Lemma \ref{GLn-invariantlemma}.  
This enables us to define the Brill-Noether classes in the cohomology of the moduli stack. See \S \ref{CohBN}.

For moduli stacks $\mathcal{U}_C(n,d)$, when $n>2$ and $d\neq 2g-2$, the same proofs and techniques hold. However, to obtain the final conclusion, we need to have that the Hodge conjecture (resp. Theorem \ref{JSpecthm}) holds for Jacobian of a higher  degree general spectral curve
(cf. \cite{Ar}, for unramified coverings). The proofs employed in Theorem \ref{mainThm} will then also hold for higher rank moduli spaces.  For degree $d\neq 2g-2$, the divisor classes on the Jacobian of spectral curve  descend on the moduli space (see Lemma \ref{descend}), in several other cases too. (This was mentioned by Newstead). 

We believe that the techniques used in this paper can be utilised in other contexts too. To our knowledge, no results on Brill-Noether loci are known on moduli stacks. Using the group-equivariance of the Brill-Noether loci, it might be possible to study them further. 


\section{Notations}

All the varieties are defined over complex numbers.
\begin{enumerate}
\item $\mathcal{U}_{C}(n,d)$ denotes the moduli stack of semistable bundles of rank $r$ and degree $d$ over $C$. It admits a good moduli space and we denote it by $U_C(n,d)$.
\item $\mathcal{SU}_{C}(n,\mathcal{L})$ denotes the moduli stack of semistable bundles of rank $r$ and fixed determinant $\mathcal{L}$ of degree $d$ over $C$. $SU_C(n,\mathcal{L})$ denotes its good moduli space.
\item $J(C)$ is the Jacobian variety of isomorphism classes of line bundles of degree $0$ over $C$.
\item $J_{d}(C)$ is the isomorphism classes of line bundles of degree $d$ over $C$.
\item $\mathcal{O}(D)$ denotes the line bundle corresponding to a divisor $D$ on $C$.
\item Given a closed subvariety $W\subset X$, $[W]$ denotes the cohomology class in the integral or rational cohomology group of $X$.
\end{enumerate}
\if
1) $\mathcal{U}_{C}(n,d)$ denotes the moduli space of strong equivalence
classes of semistable bundles of rank $r$ and degree $d$ over $C$.\\
2) $\mathcal{SU}_{C}(n,\mathcal{L})$ denotes the moduli space of strong equivalence
classes of semistable bundles of rank $r$ and fixed determinant $\mathcal{L}$ of degree $d$ over $C$.\\
3) $J(C)$ is the Jacobian variety of isomorphism classes of line bundles of
degree $0$ over $C$.\\
4) $J_{d}(C)$ is the isomorphism classes of line bundles of degree $d$ over $C$.\\
5) $\mathcal{O}(D)$ denotes the line bundle corresponding to a divisor $D$ on $C$.\\
6) Given a closed subvariety $W\subset X$, $[W]$ denotes the cohomology class in the integral or rational cohomology group of $X$.\\
\fi

\section{Spectral curves and Moduli Spaces} \label{spectral}
\label{sec:Spectral curves}

In this section we recall the construction of spectral curve from \cite{BNR} which will be needed in this paper.

\subsection{Spectral curve}
 Let $C$ be a smooth projective curve of genus $g\geq 2$ defined over complex numbers.  
Fix $n\geq 1$. Let $L$ be a line bundle on $C$ and $s=(s_k)$ be sections of $L^k$ for $k=1,2,\cdots,n$.  Let $\pi:\mathbb{P}(\mathcal{O}_C\oplus L^{\ast})\rightarrow C$ be the natural projection map and $\mathcal{O}(1)$ be the relatively ample bundle.  Then $\pi_{\ast}(\mathcal{O}(1))$ is naturally isomorphic to $\mathcal{O}_C \oplus L^{\ast}$ and therefore has a canonical section.  This provides a section of $\mathcal{O}(1)$ denoted by $y$.  By projection formula we have:
	\begin{equation*}
	\pi_{\ast}(\pi^{\ast}L\otimes \mathcal{O}(1))\cong L\otimes \pi_{\ast}(\mathcal{O}(1))\cong L\otimes (\mathcal{O}_C\oplus L^{\ast})=L\oplus \mathcal{O}_C.
	\end{equation*}
Therefore $\pi_{\ast}(\pi^{\ast}L\otimes \mathcal{O}(1))$ also has a canonical section and we denote the corresponding section of $\pi^{\ast}L\otimes \mathcal{O}(1)$ by $x$.  Consider the section
\begin{equation}\label{SC}
x^n+(\pi^{\ast}s_1)yx^{n-1}+\cdots+(\pi^{\ast}s_n)y^n
\end{equation}  
of $\pi^{\ast}L^n\otimes \mathcal{O}(n)$.  Zero scheme of this section is a subscheme of $\mathbb{P}(\mathcal{O}_C\oplus L^{\ast})$ and is called spectral curve of the given curve $C$ and is denoted by $\widetilde{C}_{s}$ or $\widetilde{C}$ in short.  Let $\pi:\widetilde{C}\rightarrow C$ be the restriction of the natural projection  $\pi:\mathbb{P}(\mathcal{O}_C\oplus L^{\ast})\rightarrow C$.  It can be checked that $\pi:\widetilde{C}\rightarrow C$ is finite and its fiber over any point $c\in C$ is a subscheme of $\mathbb{P}^1$ given by
\begin{equation*}
x^n+a_1yx^{n-1}+\cdots+a_ny^n=0,
\end{equation*}
where $(x,y)$ is a homogeneous co-ordinate system and $a_i$ is the value of $s_i$ at $c$.\\
Let $\widetilde{g}$ be the genus of $\widetilde{C}$.  As $\pi_{\ast}(\mathcal{O}_{\widetilde{C}})\cong \mathcal{O}_C\oplus L^{-1}\oplus \cdots \oplus L^{-(n-1)}$, we have the following relation between genus $\widetilde{g}$ of the spectral curve $\widetilde{C}$ and genus $g$ of $C$ using Riemann-Roch theorem
\begin{equation*}
1-\widetilde{g}=\chi(\widetilde{C},\mathcal{O}_{\widetilde{C}})=\chi(C,\pi_{\ast}(\mathcal{O}_{\widetilde{C}}))=\sum_{i=0}^{n-1}\chi(C,L^{-i})=-\deg(L)\cdot\frac{ n(n-1)}{2}+n(1-g).
\end{equation*}
Hence we have:
\begin{equation}\label{E1}
\widetilde{g}=\deg(L)\cdot\frac{ n(n-1)}{2}+n(g-1)+1.
\end{equation}
Moreover if we take the line bundle $L$ to be of degree $2g-2$, say the canonical line bundle $K_C$ for example, then from (\ref{E1}) the genus $\widetilde{g}$ of the corresponding spectral curve $\widetilde{C}$ is given by:
\begin{equation}\label{E2}
\widetilde{g}=n^2(g-1)+1= \dim({U}_{C}(n,d)).
\end{equation} 


\subsection{Spectral curve and moduli space of semistable bundles}

Here we relate the spectral curve $\widetilde{C}$ with the moduli space of semistable bundles of fixed rank and degree over $C$.  Consider the following theorem.
\begin{theorem}\label{T1}
	Let $C$ be any curve and $L$ any line bundle on $C$.  Let $(s)=((s_i))\in \Gamma(L)\oplus \Gamma(L^2)\oplus \cdots \oplus \Gamma(L^n)$ be so chosen such that the corresponding spectral curve $\widetilde{C}_s$ is integral, smooth and non-empty.  Then there is a bijective correspondence between isomorphism classes of line bundles on $\widetilde{C}_s$ and isomorphism classes of pairs $(E,\phi)$ where $E$ is a vector bundle of rank $n$ and $\phi:E\rightarrow L\otimes E$ a homomorphism with characteristic coefficients $s_i$.
\end{theorem}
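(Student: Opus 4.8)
The strategy I would follow is the classical one of Beauville--Narasimhan--Ramanan: realise the spectral curve as a relative spectrum over $C$ and translate line bundles on it into $L$-twisted endomorphisms on $C$ by push-forward. Set $\mathcal{A}:=\pi_{\ast}\mathcal{O}_{\widetilde{C}}\cong\mathcal{O}_{C}\oplus L^{-1}\oplus\cdots\oplus L^{-(n-1)}$, regarded as a sheaf of $\mathcal{O}_{C}$-algebras, so that $\widetilde{C}=\mathrm{Spec}_{C}(\mathcal{A})$ and, $\pi$ being affine, coherent sheaves on $\widetilde{C}$ are the same as coherent $\mathcal{A}$-modules. The first point to record is purely algebraic: locally on $C$, after trivialising $L$, one has $\mathcal{A}=\mathcal{O}_{C}[t]/(t^{n}+s_{1}t^{n-1}+\cdots+s_{n})$ with $t$ playing the role of a ``section of $L$'', i.e.\ globally $\mathcal{A}$ is $\mathrm{Sym}^{\bullet}_{\mathcal{O}_{C}}(L^{-1})=\bigoplus_{k\geq 0}L^{-k}$ modulo the characteristic-polynomial relation. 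Consequently, an $\mathcal{A}$-module structure on a coherent sheaf $E$ on $C$ is precisely the datum of an $\mathcal{O}_{C}$-linear map $\phi:E\to L\otimes E$ whose iterates $\phi^{(k)}:E\to L^{k}\otimes E$ satisfy $\phi^{(n)}+s_{1}\phi^{(n-1)}+\cdots+s_{n}\,\mathrm{id}_{E}=0$.

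From $(E,\phi)$ to a line bundle: given a pair with $\operatorname{rk}E=n$ and $\phi$ having characteristic coefficients $s_{i}$ — that is, $\det(t\cdot\mathrm{id}_{E}-\phi)=t^{n}+s_{1}t^{n-1}+\cdots+s_{n}$ — the Cayley--Hamilton theorem for the twisted endomorphism $\phi$ yields exactly the relation above, so $E$ becomes an $\mathcal{A}$-module and hence corresponds to a coherent sheaf $\widetilde{E}$ on $\widetilde{C}$ with $\pi_{\ast}\widetilde{E}=E$. Since $\widetilde{C}$ is a smooth curve, it then suffices to show that $\widetilde{E}$ is torsion-free of generic rank $1$. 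Torsion-freeness is immediate: the torsion subsheaf of $\widetilde{E}$ has finite support, so its push-forward is an $\mathcal{O}_{C}$-torsion subsheaf of the locally free sheaf $E$, hence zero, and therefore (as $\pi$ is finite) the torsion subsheaf itself vanishes. For the rank, integrality of $\widetilde{C}$ makes $\mathcal{A}\otimes_{\mathcal{O}_{C}}k(C)$ a field, necessarily of degree $n$ over $k(C)$; since $E\otimes_{\mathcal{O}_{C}}k(C)$ is $n$-dimensional over $k(C)$, it is one-dimensional over this field, i.e.\ $\widetilde{E}$ has rank $1$.

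Conversely, from a line bundle $M$ on $\widetilde{C}$ I would set $E:=\pi_{\ast}M$, locally free of rank $n$ because $\pi$ is finite and flat of degree $n$ (being a finite morphism of smooth curves, or because $\mathcal{A}$ is $\mathcal{O}_{C}$-locally free). The $\mathcal{A}$-action supplies a map $\phi:E\to L\otimes E$ — concretely the push-forward of multiplication by the tautological section $x/y$ of $\pi^{\ast}L|_{\widetilde{C}}$, which makes sense because $y$ is nowhere zero on $\widetilde{C}$ (at a zero of $y$ the defining equation $x^{n}+\cdots+(\pi^{\ast}s_{n})y^{n}$ forces $x=0$). By construction $\phi$ satisfies the characteristic relation; to see that its characteristic coefficients are the $s_{i}$ themselves, and not merely that it satisfies this polynomial, I would restrict to the generic point of $C$: there $\phi$ acts on the one-dimensional $k(\widetilde{C})$-vector space $M\otimes k(\widetilde{C})$ as multiplication by the tautological element, whose minimal polynomial over $k(C)$ is the irreducible spectral polynomial $t^{n}+s_{1}t^{n-1}+\cdots+s_{n}$; hence the characteristic polynomial of $\phi$ over $k(C)$ equals this one, and since the characteristic coefficients of $\phi$ are global sections of the $L^{i}$ agreeing with the $s_{i}$ on a dense open, they agree everywhere.

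Finally, the two assignments are mutually inverse and compatible with isomorphisms: reconstructing an $\mathcal{A}$-module out of $(\pi_{\ast}M,\phi)$ returns $\pi_{\ast}M$ with its original module structure, hence $M$; and the twisted endomorphism attached to the sheaf $\widetilde{E}$ built from $(E,\phi)$ is $\phi$ by construction. I expect the one genuinely delicate step to be the verification that the $\mathcal{A}$-modules obtained are honest line bundles — the torsion-free/generic-rank-one argument — since that is exactly where the hypotheses that $\widetilde{C}$ be integral and smooth are used; without them one only recovers rank-one torsion-free sheaves and the correspondence degrades accordingly.
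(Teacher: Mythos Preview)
Your argument is correct and is precisely the Beauville--Narasimhan--Ramanan proof that the paper invokes: the paper does not supply its own argument for this statement but simply refers the reader to \cite[Proposition 3.6, Remarks 3.1, 3.5 and 3.8]{BNR}, and what you have written is a clean account of that reference --- the identification $\widetilde{C}=\mathrm{Spec}_{C}(\mathcal{A})$, the equivalence between $\mathcal{A}$-module structures and $L$-twisted endomorphisms via Cayley--Hamilton, and the check (using integrality and smoothness of $\widetilde{C}$) that the resulting sheaf is an honest line bundle. There is nothing to correct or contrast.
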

\begin{proof}
	See {\cite[Proposition 3.6, Remark 3.1,3.5 and 3.8; p.~172-174]{BNR}}.
\end{proof}
\begin{remark}
    From the stack theoretic perspective, Theorem 3.1 may be interpreted as
the following: There is a morphism from $J_{\delta}(\widetilde C)\to \mathcal{M}_{C}(n,d)$, where $\delta=d+(n^2-n)(g-1)$ (cf. \eqref{E3}) and 
$\mathcal{M}_C(n,d)$ is the stack of all vector bundles on C of rank $n$ and degree $d$.  
\end{remark}
 
Let $n$ be any positive integer. Then following the construction of spectral curve, by Theorem \ref{T1} we get a smooth, irreducible curve $\widetilde{C}$ and an $n$-sheeted branched covering $\pi:\widetilde{C}\rightarrow C$ such that a general $E\in {U}_{C}(n,d)$ is the direct image $\pi_{\ast}l$ of a $l\in J_{\delta}(\widetilde{C})$.  The relation between $\delta$ and $d$ can be calculated as follows (cf. \cite[p.~332]{BT}).
By the Leray spectral sequence we have:
\begin{equation}\label{LSS}
H^i(\widetilde{C},l)=H^i(C,\pi_{\ast}l)
\end{equation}
for all $i$.  Hence we have:
\begin{equation*}
\chi(\widetilde{C},l)=\chi(C,\pi_{\ast}l)=\chi(C,E).
\end{equation*}
So  by Riemann-Roch theorem we get,
\begin{equation*}
\begin{split}
& \chi(\widetilde{C},l)=\chi(C,E)\\
\Rightarrow & \delta-(\widetilde{g}-1)=d-n(g-1)\\
\Rightarrow & \delta =d+(\widetilde{g}-1)-n(g-1).
\end{split}
\end{equation*}
Therefore by (\ref{E2}) we get the following relation between $\delta(=\deg(l))$ and $d(=\deg(E))$ :
\begin{equation}\label{E3}
\delta=d+(n^2-n)(g-1).
\end{equation}
As direct image of a line bundle is not necessarily semistable, the map 
\begin{equation*}
\pi_{\ast}:J_{\delta}(\widetilde{C})\dashrightarrow {U}_{C}(n,d)
\end{equation*}
is only a rational map.
Let us denote by $J^{ss}$ the semistable locus  of $J_{\delta}(\widetilde{C})$ defined as:
\begin{equation*}
J^{ss}:=\{l\in J_{\delta}(\widetilde{C})\mid \pi_{\ast}l\in {U}_{C}(n,d)\}.
\end{equation*}
Then $J^{ss}$ is a Zariski open subset of $J_{\delta}(\widetilde{C})$ and the map 
\begin{equation}\label{E39}
\pi_{\ast}:J^{ss}\rightarrow {U}_{C}(n,d)
\end{equation}
is a regular dominant map (cf. \cite[Theorem 1; p.~169]{BNR}).
Moreover, the following theorem shows that the map $\pi_{\ast}$ is a generically finite map.
\begin{theorem}\label{T2}
	The map $\pi_{\ast}:J^{ss}\rightarrow {U}_{C}(n,d)$ is dominant and is of degree $2^{3g-3} 3^{5g-5}\cdots n^{(2n-1)(g-1)}$.
\end{theorem}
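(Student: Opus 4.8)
The plan is to use Theorem~\ref{T1} to rewrite the general fibre of $\pi_\ast$ as the general fibre of a ``characteristic polynomial'' morphism between two affine spaces of the same dimension, and then to compute the degree of that morphism by a quasi-homogeneous B\'ezout argument. Throughout we are in the generically finite situation $\deg L=2g-2$ of (\ref{E2}).

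Fix the data $(s)=((s_i))$ and the line bundle $L$ of degree $2g-2$ defining $\widetilde{C}=\widetilde{C}_s$, and let $E_0$ be a general, hence stable, point of $U_C(n,d)$. By Theorem~\ref{T1}, a line bundle $l$ on $\widetilde{C}_s$ with $\pi_\ast l\cong E_0$ is the same datum as a homomorphism $\phi\colon E_0\to L\otimes E_0$ whose characteristic coefficients are $s_1,\dots,s_n$, and since $E_0$ is simple, $\mathrm{Aut}(E_0)=\mathbb{C}^\ast$ acts trivially on the set of such $\phi$. As this correspondence is compatible with base change, the fibre $\pi_\ast^{-1}([E_0])$ is identified, as a scheme, with the fibre over $(s)$ of
\begin{equation*}
\mathrm{char}_{E_0}\colon\ V:=H^0\bigl(C,\mathcal{E}nd(E_0)\otimes L\bigr)\ \longrightarrow\ B:=\bigoplus_{i=1}^{n}H^0\bigl(C,L^{i}\bigr),\qquad \phi\longmapsto\bigl(\sigma_1(\phi),\dots,\sigma_n(\phi)\bigr),
\end{equation*}
where $\sigma_i(\phi)\in H^0(C,L^i)$ is the $i$-th characteristic coefficient of $\phi$. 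A Riemann--Roch computation (using $\deg L=2g-2$ and the simplicity of $E_0$, together with the generality of $E_0$ when $L\neq K_C$) shows $\dim V=\dim B=(n^2-1)(g-1)+h^0(C,L)$, so source and target have equal dimension.

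I would then compute $\deg\,\mathrm{char}_{E_0}$. In linear coordinates on $B$ adapted to the decomposition $\bigoplus_i H^0(C,L^i)$, each coordinate of $\mathrm{char}_{E_0}$ lying in the $i$-th block is a homogeneous polynomial of degree $i$ on $V$, because $\sigma_i(t\phi)=t^i\sigma_i(\phi)$. By the theory of finite homogeneous maps between affine spaces, as soon as the common zero locus of these polynomials is $\{0\}$, the map $\mathrm{char}_{E_0}$ is finite flat, and its degree is the product of the degrees of the coordinate functions:
\begin{equation*}
\deg\,\mathrm{char}_{E_0}\ =\ \prod_{i=1}^{n}i^{\,h^0(C,L^i)}\ =\ 1^{\,h^0(C,L)}\prod_{i=2}^{n}i^{\,(2i-1)(g-1)}\ =\ 2^{3g-3}\,3^{5g-5}\cdots n^{(2n-1)(g-1)},
\end{equation*}
where we used $h^0(C,L^i)=(2i-1)(g-1)$ for $i\ge 2$ (then $\deg L^i>2g-2$). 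Since $\pi_\ast$ is a dominant morphism of complex varieties it is generically \'etale, so the general fibre $\pi_\ast^{-1}([E_0])\cong\mathrm{char}_{E_0}^{-1}((s))$ is reduced; being the fibre of a finite flat morphism of degree $\deg\,\mathrm{char}_{E_0}$, it consists of exactly that many points, which is the asserted degree of $\pi_\ast$.

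It remains to prove the vanishing $\mathrm{char}_{E_0}^{-1}(0)=\{0\}$, i.e.\ that a general stable $E_0$ carries no nonzero nilpotent homomorphism $\phi\colon E_0\to L\otimes E_0$; note that $\mathrm{char}_{E_0}^{-1}(0)$ is the cone of such $\phi$, which is finite precisely when it is a single point. This is the main obstacle. I would argue by a dimension estimate: stratifying the locus of pairs $(E,\phi)$ with $\phi\neq 0$ nilpotent by the Jordan type of $\phi$ at the generic point of $C$, one finds that for each type the kernel filtration $0\subsetneq E^1\subsetneq\cdots\subsetneq E^k=E$ (with $\phi(E^j)\subseteq E^{j-1}\otimes L$) forces a nonzero homomorphism $E^j/E^{j-1}\to (E^{j-1}/E^{j-2})\otimes L$ between consecutive graded pieces; this is a Brill--Noether-type condition whose expected codimension is positive for a general bundle when $\deg L=2g-2$, so the bundles $E$ that occur form a proper closed subvariety of $U_C(n,d)$ --- equivalently, the zero section is the unique irreducible component of the global nilpotent cone dominating $U_C(n,d)$. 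In rank two this is elementary: a nonzero nilpotent $\phi$ amounts to a line subbundle $M\subset E_0$ together with a nonzero map $E_0/M\to L\otimes M$, i.e.\ a nonzero section of $L\otimes M^{2}\otimes(\det E_0)^{-1}$, which has degree $2\deg M\le 2\lfloor (g-1)/2\rfloor<g$; since a general $E_0$ of degree $2g-2$ has maximal sub-line-bundle degree $\lfloor(g-1)/2\rfloor$ and the locus of such $E_0$ admitting a pair $(M,\text{section})$ has dimension strictly less than $\dim U_C(2,2g-2)=4g-3$, a general $E_0$ admits no such $\phi$. Granting the vanishing, the preceding steps give $\deg\pi_\ast=2^{3g-3}\,3^{5g-5}\cdots n^{(2n-1)(g-1)}$.
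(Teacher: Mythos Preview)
Your approach is exactly the one in \cite[Remark~5.4]{BNR} that the paper cites: via Theorem~\ref{T1} the fibre $\pi_\ast^{-1}([E_0])$ over a general stable $E_0$ is identified with the fibre of $\mathrm{char}_{E_0}\colon H^0(\mathcal{E}nd(E_0)\otimes L)\to\bigoplus_i H^0(L^i)$ over $(s)$, and because the $i$-th block of coordinates is homogeneous of degree $i$ in $\phi$, the quasi-homogeneous B\'ezout count gives $\deg\mathrm{char}_{E_0}=\prod_i i^{\,h^0(L^i)}$ as soon as $\mathrm{char}_{E_0}^{-1}(0)=\{0\}$. So the route is the same as the cited source.

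Where your write-up is genuinely incomplete is the last paragraph. The condition $\mathrm{char}_{E_0}^{-1}(0)=\{0\}$ says precisely that $E_0$ is \emph{very stable} in the sense of \cite{BNR}, and the density of very stable bundles (for $L=K_C$) is a theorem of Laumon that \cite{BNR} invoke; it is not a triviality. Your Jordan-type stratification sketch points in the right direction but does not actually bound the dimension of any stratum in rank $n>2$: you assert that each constraint has ``positive expected codimension'' without controlling the moduli of the kernel filtration, the degrees of the graded pieces, and the sections realizing the successive maps, and without summing these contributions against $\dim U_C(n,d)$. Even in rank~$2$ your closing sentence is an assertion rather than a count: to make it a proof you must parametrize, for each subbundle degree $e\le\lfloor(g-1)/2\rfloor$, the incidence variety of triples $(E_0,M,s)$ with $M\subset E_0$ of degree $e$ and $0\neq s\in H^0(L\otimes M^2\otimes(\det E_0)^{-1})$, bound its dimension, and verify that the image in $U_C(2,2g-2)$ has dimension $<4g-3$. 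This can be done, but it is the substantive part of the argument; you should either carry it out in full or simply cite Laumon (via \cite{BNR}) for the density of very stable bundles, as the paper implicitly does.
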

\begin{proof}
See {\cite[Remark 5.4; p.~177]{BNR}}.
\end{proof}

\section{Jacobian of a spectral curve and cycle class maps}\label{Prym}

In this section we consider the moduli space ${U}_{C}(2,d)$.  For a general $E\in {U}_{C}(2,d)$ we get a spectral curve $\pi:\widetilde{C}\rightarrow C$ where the map $\pi$ is a $2$-sheeted branched covering.  Let $n$ be the number of branch points.  Then by Riemann-Hurwitz formula we get:
\begin{equation}\label{E4}
\widetilde{g}=\frac{n}{2}+2g-1.
\end{equation}
Also we have from (\ref{E2}):
\begin{equation}\label{E5}
\widetilde{g}=4g-3.
\end{equation}
Therefore from (\ref{E4}) and (\ref{E5}) we get, 
\begin{equation*}
n=4g-4\neq 0 ~as ~ g\geq 2
\end{equation*}
that is, $\pi:\widetilde{C}\rightarrow C$ is ramified with $4g-4$ branch points.
Now we have the following lemma.
\begin{lemma}\label{L1}
	The map $\pi^{\ast}:J(C)\rightarrow J(\widetilde{C})$ is injective.
\end{lemma}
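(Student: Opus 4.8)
The plan is to deduce injectivity of $\pi^{\ast}:J(C)\to J(\widetilde{C})$ from the standard fact that a finite morphism of smooth projective curves induces a pullback on Jacobians whose kernel is finite, together with a connectedness argument that rules out any nontrivial kernel here. First I would recall that for a finite separable morphism $\pi:\widetilde{C}\to C$ of degree $m$, the composite $\Nm\circ\,\pi^{\ast}:J(C)\to J(C)$ is multiplication by $m$, where $\Nm:J(\widetilde{C})\to J(C)$ is the norm map; hence $\Ker(\pi^{\ast})$ is contained in the $m$-torsion $J(C)[m]$ and in particular is finite. So the content of the lemma is that this finite kernel is in fact trivial.

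The key step is to identify $\Ker(\pi^{\ast})$ with a group built from the intermediate (étale) coverings, and to observe that here there are none. Concretely, a nonzero class in $\Ker(\pi^{\ast})$ is a nontrivial torsion line bundle $\eta$ on $C$ with $\pi^{\ast}\eta\cong\mathcal{O}_{\widetilde{C}}$; such an $\eta$ of order $\ell$ corresponds to a connected étale cyclic cover $C'\to C$ of degree $\ell$ through which $\pi$ factors, i.e. $\widetilde{C}\to C'\to C$. Thus it suffices to show $\widetilde{C}$ admits no nontrivial factorization through a connected étale cover of $C$. For this I would use that $\pi:\widetilde{C}\to C$ is ramified — indeed we have just computed it has $4g-4>0$ branch points — so $\pi$ itself is not étale; but more is needed, since $\widetilde{C}$ could a priori still dominate some intermediate étale $C'$. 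The cleanest route is a monodromy argument: since $\widetilde{C}$ is the spectral curve and $\widetilde{C}\to C$ is a degree $2$ cover, any factorization $\widetilde{C}\to C'\to C$ with $C'\to C$ étale of degree $\ell\mid 2$ forces $\ell\in\{1,2\}$; if $\ell=2$ then $\widetilde{C}\to C'$ has degree $1$, i.e. $\widetilde{C}\cong C'$ is itself étale over $C$, contradicting the presence of branch points. Hence $\ell=1$, $\eta=\mathcal{O}_C$, and $\pi^{\ast}$ is injective.

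Alternatively, and perhaps more in keeping with the Hodge-theoretic tenor of the paper, one can argue at the level of $H^1$: the map $\pi^{\ast}:H^1(C,\mathbb{Q})\to H^1(\widetilde{C},\mathbb{Q})$ is injective because $\Nm\circ\pi^{\ast}=\deg(\pi)\cdot\mathrm{id}$ on $H^1(C,\mathbb{Q})$, so $\pi^{\ast}$ is injective on rational cohomology, and therefore $\pi^{\ast}$ is an isogeny onto its image on the level of Jacobians; combined with the torsion bound from the first paragraph this already gives that $\Ker(\pi^{\ast})$ is finite, and the no-intermediate-étale-cover observation upgrades this to injectivity.

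The main obstacle is the last point: ruling out a nontrivial \emph{connected} étale intermediate cover. For a general spectral curve this should follow from an irreducibility/monodromy statement — the spectral construction of \cite{BNR} produces, for general $s$, a curve $\widetilde{C}$ whose covering $\widetilde{C}\to C$ has full symmetric monodromy group $S_n$ (here $S_2$), and a cover with full monodromy admits no nontrivial intermediate cover, étale or not. In the degree $2$ case this is essentially automatic: the only intermediate covers of a degree $2$ map are $\widetilde{C}$ and $C$ themselves, so the only thing to check is that $\widetilde{C}\to C$ is not étale, which is immediate from the $4g-4$ branch points computed above. I would therefore expect the proof in the paper to be short, invoking precisely the ramification of $\pi$ plus the elementary structure of degree $2$ covers, with the torsion-kernel reduction as the one genuinely used general principle.
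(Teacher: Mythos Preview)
Your argument is correct and is precisely the classical one. The paper does not give a proof at all; it simply cites Mumford's lemma on Prym varieties (\cite[Lemma; p.~332]{Mum}), and what you have written is essentially the content of that lemma: for a double cover $\pi:\widetilde{C}\to C$, the kernel of $\pi^{\ast}$ is contained in $J(C)[2]$ via $\Nm\circ\pi^{\ast}=[2]$, a nontrivial element would force $\widetilde{C}\to C$ to factor through (hence be) an \'etale double cover, and the $4g-4>0$ branch points computed just before the lemma rule this out. Your observation that in degree $2$ there are no proper intermediate covers makes the ``no intermediate \'etale cover'' step automatic, so the monodromy discussion, while correct, is more than is needed here.
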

\begin{proof}
	See {\cite[Lemma; p.~332]{Mum}}.
\end{proof}
Consider the following Norm map, denoted by $\text{Nm}(\pi)$, associated to the map $\pi:\widetilde{C}\rightarrow C$.
\begin{equation*}
\begin{split}
\text{Nm}(\pi): J(\widetilde{C}) &\rightarrow J(C)\\
\sum_{i=1}^m n_i \widetilde{x}_i &\mapsto \sum_{i=1}^m n_i \pi(\widetilde{x}_i).
\end{split}
\end{equation*}
The identity component of $\text{Ker}(\text{Nm}(\pi))$ is defined to be the Prym variety associated to the covering $\pi:\widetilde{C}\rightarrow C$.  
Moreover we have, $J(\widetilde{C})\cong J(C)+P$ upto isogeny, where $P$ is the Prym variety associated to the covering $\pi:\widetilde{C}\rightarrow C$. 


\subsection{The Hodge (p,p)-conjecture}

Let $X$ be a smooth projective variety over complex numbers.  Let $CH^p(X)$ be the $p$-th graded piece of the Chow ring $CH^{\ast}(X)$,(cf. \cite[Chapter 1; p.~6]{Fulton}).
There is a cycle class map as follows.
\begin{equation}\label{E40}
\begin{split}
cl: CH^p(X) & \rightarrow H^{2p}(X,\mathbb{Z}).
\end{split}
\end{equation}
  Let $CH^{\ast}(X)\otimes \mathbb{Q}$ be denoted by $CH^{\ast}(X)_\mathbb{Q}$ and $j_\mathbb{Q}:H^{2p}(X,\mathbb{Q})\rightarrow H^{2p}(X,\mathbb{C})$ be the natural map.  Then the subspace of Hodge classes of $H^{2p}(X,\mathbb{Q})$,  denoted by $H^{2p}_{\Hodge}(X)$, is defined as:
\begin{equation*}\label{E42}
H^{2p}_{\Hodge}(X):= H^{2p}(X,\mathbb{Q})\cap j_{\mathbb{Q}}^{-1}(H^{p,p}(X)) .
\end{equation*} 
Consider the cycle class map $cl:CH^p(X)_\mathbb{Q} \rightarrow H^{2p}(X,\mathbb{Q})$ defined similarly as in (\ref{E40}).  The image of this map is denoted by $H^{2p}(X,\mathbb{Q})_{alg}$ and the elements in  $H^{2p}(X,\mathbb{Q})_{alg}$ are called rational algebraic classes.  The Hodge $(p,p)$-conjecture asserts the following:\\
\textbf{Hodge conjecture :}
\begin{equation*}
H^{2p}(X,\mathbb{Q})_{alg}=H^{2p}_{\Hodge}(X)\;,
\end{equation*}
that is, any rational algebraic class is a Hodge class and vice versa.

\begin{remark}\label{R1}
Hodge $(p,p)$-conjecture is trivially true for $p=0$. For $p=1$ see \cite{L}.
\end{remark}

\subsection{The Hodge conjecture for general abelian varieties}

Let $X$ be an abelian variety.  The subring of $H_{\Hodge}^{2\ast}(X)$ generated by $H_{\Hodge}^0(X)$ and $H_{\Hodge}^2(X)$ is denoted by $D^{\ast}(X)$.  The cycle classes in $D^{\ast}(X)$   are all algebraic by Remark \ref{R1}.  Let $D^p(X)$ be the $p$th graded piece of $D^{\ast}(X)$.  In particular, the Hodge $(p,p)$-conjecture is true if 
\begin{equation*}
D^p(X)=H^{2p}_{\Hodge}(X).
\end{equation*}
 Using degeneration techniques, Hodge conjecture is known to hold for a general polarised Jacobian variety with Theta divisor $\Theta$ as a polarisation, see {\cite[Theorem 17.5.1; p.~561]{LB}}. (See cf. \cite{M}, for general polarized abelian varieties).

We will note in \S \ref{JSpec}, that the Hodge conjecture holds for the Jacobian of a general spectral curve. This will be crucial to obtain the relations amongst the Brill-Noether loci on the moduli stacks. 

\section{Equivariant Cohomology and Equivariant Chow groups}

In this section, we recall some preliminary facts on the equivariant
 groups for a smooth variety $X$ of
 dimension $d$, which is equipped with an action by a linear reductive
 algebraic group $G$.
The equivariant groups and their properties that we recall below were
 defined by Borel, Totaro, Edidin-Graham, Fulton
(cf. \cite{Borel},\cite{Totaro},\cite{EdidinGraham}, \cite{Fulton} and \cite{Edidin}). 

\subsection{Equivariant cohomology $H^i_G(X,\mathbb{Z})$ of $X$}

Suppose $X$ is a variety with an action on the left by an algebraic
 group $G$. Borel defined the equivariant cohomology $H^*_G(X)$ as
 follows.
There is a contractible space $EG$ on which $G$ acts freely (on the
 right) with quotient $BG:=EG/G$. Then form the space
$$
EG\times_G X:=EG\times X/(e.g,x)\sim (e,g.x).
$$
In other words, $EG\times_G X$ represents the (topological) quotient
 stack $[X/G]$.

\begin{definition} The equivariant cohomology of $X$ with respect to
 $G$ is the ordinary singular cohomology of  $EG\times_G X$:
$$
H^i_G(X)= H^i(EG\times_G X).
$$
\end{definition}
For the special case when $X$ is a point, we have
$$
H^i_G(\{\point\})= H^i(BG).
$$
For any $X$, the map $X\rightarrow \{\point\}$ induces a pullback map $ H^i(BG)\rightarrow
 H^i_G(X)$. Hence the equivariant cohomology of $X$ has the structure
 of a $ H^i(BG)$-algebra, at least when $ H^i(BG)=0$ for odd $i$.

\subsection{Equivariant Chow groups $CH^i_G(X)$ of
 $X$}\cite{EdidinGraham}\label{equivchow}

As in the previous subsection, let $X$ be a smooth variety of dimension
 $n$, equipped with a left $G$-action. Here $G$ is an affine algebraic
 group of dimension $g$. Choose an $l$-dimensional representation $V$ of
 $G$ such that
$V$ has an open subset $U$ on which $G$ acts freely and whose
 complement has codimension more than $n-i$. The diagonal action on $X\times U$
 is also free, so there is a quotient in the category of algebraic
 spaces.
Denote this quotient by $X_G:=(X\times U)/G$.

\begin{definition}
The $i$-th equivariant Chow group $CH^G_i(X)$ is the usual Chow group \linebreak $CH_{i+l-g}(X_G)$. The codimension $i$ equivariant Chow group $CH^i_G(X)$ is the usual codimension $i$ Chow group $CH^i(X_G)$.  
\end{definition}

Note that if $X$ has pure dimension $n$ then
\begin{eqnarray*}
CH^i_G(X)&=& CH^i(X_G) \\
&=&CH_{n+l-g-i}(X_G) \\
&=&CH^G_{n-i}(X).\\
\end{eqnarray*}

\begin{proposition}
The equivariant Chow group $CH^G_i(X)$ is independent of the
 representation $V$, as long as $V-U$ has codimension more than $n-i$.
\end{proposition}
\begin{proof}
See \cite[Definition-Proposition 1]{EdidinGraham}.
\end{proof}

\begin{lemma}\label{equivclass}
If $Y\subset X$ is an $m$-dimensional subvariety which is invariant
 under the $G$-action, then it
 has a 
$G$-equivariant fundamental class
$[Y]_G \in CH^G_m(X)$. 
\end{lemma}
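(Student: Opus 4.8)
The plan is to realise $[Y]_G$ as an ordinary fundamental class on a finite-dimensional algebraic approximation of the Borel construction, exactly mirroring the definition of $CH^G_\ast$ recalled in \S\ref{equivchow}. First I would fix, as in that definition, an $l$-dimensional representation $V$ of $G$ together with an open subset $U\subset V$ on which $G$ acts freely and whose complement $V\setminus U$ has codimension greater than $n-m$; such $(V,U)$ exist (for instance a direct sum of enough copies of a faithful representation of $G$). Since the given inclusion $\iota\colon Y\hookrightarrow X$ is $G$-equivariant, $Y\times U$ is a closed $G$-invariant subvariety of $X\times U$ on which the diagonal $G$-action is free (it is already free on $U$). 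Hence the geometric quotient $Y_G:=(Y\times U)/G$ exists as an algebraic space, it is irreducible and reduced because $Y\times U$ is, and the map it induces into $X_G=(X\times U)/G$ is a closed immersion.

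Next I would verify the dimension count. The $G$-equivariant projection $Y\times U\to U$ descends to a morphism $Y_G\to U/G$ which is a fibre bundle with fibre $Y$; as $U\to U/G$ is a principal $G$-bundle, $\dim(U/G)=l-g$, so $\dim Y_G=m+l-g$. Thus $[Y_G]\in CH_{m+l-g}(X_G)=CH^G_m(X)$ is a bona fide cycle class, and I would define $[Y]_G:=[Y_G]$.

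The step requiring the most care — and the one I expect to be the main obstacle — is showing this class is independent of the auxiliary pair $(V,U)$, compatibly with the identifications underlying the Proposition quoted above. Given two admissible choices $(V_1,U_1)$ and $(V_2,U_2)$, I would compare both to $(V_1\oplus V_2, U')$ for a suitable $G$-stable open subset $U'$ containing $U_1\times V_2$ and $V_1\times U_2$: the two projections $(X\times U')/G\to(X\times U_i)/G$ are restrictions of the vector-bundle projections $(X\times U_i\times V_j)/G\to(X\times U_i)/G$, hence flat, and flat pullback sends $[(Y\times U_i)/G]$ to $[(Y\times U')/G]$ by the basic compatibility of fundamental classes with flat pullback \cite[Chapter 1]{Fulton}. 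This is precisely the argument used to prove that $CH^G_\ast(X)$ itself is well defined, so the same reasoning shows $[Y]_G$ is canonical. Finally I would record that, $\iota$ being equivariant, the class $[Y]_G$ is natural for equivariant flat pullback and proper pushforward, which is all that is needed in the sequel; this follows formally from the corresponding statements on the approximations $X_G$.
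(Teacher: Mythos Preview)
Your argument is correct and follows the same route as the paper: form $(Y\times U)/G\hookrightarrow X_G$ and take its ordinary fundamental class. The paper's proof is terser, omitting the dimension count and the independence-of-$(V,U)$ verification you spell out (it implicitly defers the latter to the Definition--Proposition of \cite{EdidinGraham} already quoted), so your version is simply a more detailed rendition of the same construction.
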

\begin{proof}
Indeed, we can consider the product $(Y\times
 U)\subset X\times U$, where $U$ is as above and the corresponding
 quotient $(Y\times U)/G$ canonically embeds into $X_G$. The fundamental
 class of $(Y\times U)/G$ defines the class $[Y]_G\in CH^G_m(X)$. More
 generally, if $V$ is an $l$-dimensional representation of $G$ and
 $S\subset X\times V$ is an $(m+l)$-dimensional subvariety which is invariant
 under the $G$-action, then the quotient $(S\cap (X\times U))/G \subset
 (X\times U)/G$ defines the $G$-equivariant fundamental class $[S]_G\in
 CH^G_m(X)$ of $S$.
\end{proof}

\begin{proposition}
If $\alpha\in CH^G_m(X)$ then there exists a representation $V$ such
 that $\alpha= \sum a_i[S_i]_G$, for some $G$-invariant subvarieties
 $S_i$
of $X\times V$.
\end{proposition}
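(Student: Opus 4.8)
The plan is to deduce this statement from the structure of equivariant Chow groups set up in the previous subsection. Recall that by definition $CH^G_m(X) = CH_{m+l-g}(X_G)$ where $X_G = (X\times U)/G$ for a suitably chosen $l$-dimensional representation $V$ of $G$ with open free locus $U \subset V$ whose complement has codimension greater than $n-m$. So the first step is to take an arbitrary class $\alpha \in CH^G_m(X)$ and unwind it to an honest class $\bar\alpha \in CH_{m+l-g}(X_G)$ in the ordinary Chow group of the algebraic space $X_G$.

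Next I would invoke the classical fact (Fulton, \cite{Fulton}) that the ordinary Chow group $CH_k(Y)$ of any variety (or algebraic space) $Y$ is generated, as an abelian group, by the fundamental classes $[Z]$ of its $k$-dimensional subvarieties $Z \subset Y$. Applying this to $Y = X_G$ and $k = m+l-g$, I can write $\bar\alpha = \sum_i a_i [\bar Z_i]$ for finitely many subvarieties $\bar Z_i \subset X_G$ of dimension $m+l-g$. The task then is to lift each $\bar Z_i$ to a $G$-invariant subvariety of $X\times V$. For this, note that the quotient map $q : X\times U \to X_G = (X\times U)/G$ is a $G$-torsor, in particular flat (and smooth) of relative dimension $g$; hence $q^{-1}(\bar Z_i)$ is a $G$-invariant closed subvariety of $X\times U$ of dimension $(m+l-g)+g = m+l$, and one checks it is irreducible because $q$ has irreducible fibres and $\bar Z_i$ is irreducible (a torsor over an irreducible base with irreducible fibre is irreducible). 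Finally take $S_i := \overline{q^{-1}(\bar Z_i)}$, the closure inside $X\times V$; since $G$ acts on all of $X\times V$ and $q^{-1}(\bar Z_i)$ is $G$-invariant, so is its closure, and $S_i$ has dimension $m+l$ as required. By the construction in Lemma \ref{equivclass}, the equivariant class $[S_i]_G \in CH^G_m(X)$ is represented by the fundamental class of $(S_i \cap (X\times U))/G = q^{-1}(\bar Z_i)/G = \bar Z_i$, so $[S_i]_G$ maps to $[\bar Z_i]$ under the identification $CH^G_m(X)\cong CH_{m+l-g}(X_G)$. Therefore $\alpha = \sum_i a_i [S_i]_G$, which is exactly the assertion.

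The one point requiring a little care — and the main (mild) obstacle — is the compatibility of the representation $V$: Lemma \ref{equivclass} and the isomorphism $CH^G_m(X)\cong CH_{m+l-g}(X_G)$ are each stated for a representation whose free-locus complement has codimension large enough (more than $n-m$), and one must make sure the same $V$ works simultaneously for representing $\alpha$ and for all the $[S_i]_G$. This is harmless: fixing any admissible $V$ at the outset (large enough codimension for the degree $m$ in question), everything above takes place over that fixed $V$, and the independence-of-$V$ Proposition guarantees the resulting class in $CH^G_m(X)$ is well defined. A second routine check is irreducibility of $q^{-1}(\bar Z_i)$, which follows from flatness of the torsor $q$ together with connectedness of $G$ if $G$ is connected; if $G$ is disconnected one instead decomposes $q^{-1}(\bar Z_i)$ into its finitely many irreducible components, which are permuted transitively by $G$, and replaces $S_i$ by the closure of any one of them together with the observation that the cycle $q^{-1}(\bar Z_i)$ (with multiplicity one on each component) is the pullback cycle whose quotient is $[\bar Z_i]$ — in either case the conclusion is unaffected. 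No deep input is needed beyond the definitions recalled above and the standard generation statement for ordinary Chow groups.
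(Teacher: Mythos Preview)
Your argument is correct and is essentially the proof given by Edidin--Graham in \cite[Proposition~1]{EdidinGraham}; the paper itself does not supply an independent proof but simply cites that reference. Your handling of the disconnected-$G$ case is slightly awkward (a single irreducible component of $q^{-1}(\bar Z_i)$ need not be $G$-invariant), but the fix is routine: take $S_i$ to be the closure of the full $G$-invariant preimage $q^{-1}(\bar Z_i)$, and either read ``subvariety'' as allowing reducible $G$-invariant closed subsets, or break $S_i$ into its components and absorb the resulting multiplicities into the coefficients $a_i$.
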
 
\begin{proof}
See \cite[Proposition 1]{EdidinGraham}.
\end{proof}

\subsection{Functoriality properties}

Suppose $f:X\rightarrow Y$ is a $G$-equivariant morphism.
Let $\mathcal{S}$ be one of the following properties of schemes or algebraic
 spaces: proper, flat, smooth, regular embedding or l.c.i.

\begin{proposition}
If $f:X\rightarrow Y$ has property $\mathcal{S}$, then the induced map
$f_G:X_G\rightarrow Y_G$ also has property $\mathcal{S}$.
\end{proposition}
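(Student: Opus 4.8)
The plan is to reduce the statement to faithfully flat descent, using that the quotient maps $p_X\colon X\times U\to X_G$ and $p_Y\colon Y\times U\to Y_G$ are $G$-torsors, hence faithfully flat (in fact smooth and surjective), since $G$ acts freely on $X\times U$ and on $Y\times U$.

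First I would observe that $f\times\mathrm{id}_U\colon X\times U\to Y\times U$ is exactly the base change of $f\colon X\to Y$ along the projection $\mathrm{pr}\colon Y\times U\to Y$, because $X\times_Y(Y\times U)\cong X\times U$ compatibly with the evident maps. As $\mathrm{pr}$ is flat (indeed smooth, $U$ being smooth over a point) and each of the properties in the list $\mathcal{S}$ — proper, flat, smooth, regular embedding, l.c.i. — is preserved under flat base change (see \cite{Fulton}), the morphism $f\times\mathrm{id}_U$ has property $\mathcal{S}$.

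Next I would check that the square
\[
\begin{tikzcd}
X\times U \arrow[r, "f\times\mathrm{id}_U"] \arrow[d] & Y\times U \arrow[d] \\
X_G \arrow[r, "f_G"] & Y_G
\end{tikzcd}
\]
(with vertical maps $p_X$ and $p_Y$) is Cartesian. Indeed, the composite $p_Y\circ(f\times\mathrm{id}_U)\colon X\times U\to Y_G$ is $G$-invariant, hence factors uniquely through $p_X$; the resulting $X_G$-morphism from the $G$-torsor $X\times U$ to the pulled-back $G$-torsor $f_G^{\,*}(Y\times U)=X_G\times_{Y_G}(Y\times U)$ is $G$-equivariant, and any morphism of $G$-torsors over a common base is an isomorphism. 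Thus $X\times U\cong X_G\times_{Y_G}(Y\times U)$, i.e. $f\times\mathrm{id}_U$ is the base change of $f_G$ along the faithfully flat map $p_Y$.

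Finally, each property in $\mathcal{S}$ satisfies fppf descent — if the base change of a morphism along a faithfully flat morphism has the property, then the morphism itself does — so the previous two steps force $f_G$ to have property $\mathcal{S}$. The main point requiring care is that $X_G$ and $Y_G$ are a priori only algebraic spaces, not schemes (the $G$-quotients from \S\ref{equivchow} exist in that category), so one must invoke descent of properness, flatness, smoothness, regular immersions and l.c.i. morphisms in the setting of algebraic spaces rather than schemes; these statements are nonetheless standard, and the whole argument is the one carried out in \cite{EdidinGraham}.
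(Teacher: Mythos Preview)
Your argument is correct and is precisely the standard descent argument from \cite[Proposition~2]{EdidinGraham}, which is exactly what the paper cites as its proof. You have simply unpacked the referenced proof; nothing differs in approach.
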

\begin{proof}
See \cite[Proposition 2]{EdidinGraham}.
\end{proof}

\begin{proposition}\label{equivfunct}
Equivariant Chow groups (resp. equivariant cohomology) have the same functoriality as ordinary Chow
 groups (resp. cohomology) for equivariant morphisms with property $\mathcal{S}$.
\end{proposition}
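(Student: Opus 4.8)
The plan is to reduce the statement to ordinary intersection theory (resp. ordinary singular cohomology) applied to the finite-dimensional approximations $X_G$, following verbatim the scheme of the definitions in \S\ref{equivchow} and using the Proposition above, which guarantees that $f_G$ inherits property $\mathcal{S}$ from $f$.

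First I would fix an equivariant morphism $f\colon X\to Y$ with property $\mathcal{S}$ together with the degree $i$ in which the operation is to be described. Choose an $l$-dimensional representation $V$ of $G$ and an open $U\subset V$ on which $G$ acts freely, with the complement $V\setminus U$ of codimension larger than both $\dim X-i$ and $\dim Y-i$; such a pair exists, and any two pairs admit a common refinement by passing to a direct sum. The diagonal $G$-action on $X\times U$ and on $Y\times U$ is free, and $f\times \mathrm{id}_U$ descends to a morphism $f_G\colon X_G=(X\times U)/G\to Y_G=(Y\times U)/G$. By the preceding Proposition $f_G$ again has property $\mathcal{S}$, so ordinary intersection theory supplies the corresponding operation relating $CH^{\ast}(X_G)$ and $CH^{\ast}(Y_G)$: proper pushforward when $\mathcal{S}$ is ``proper'', flat pullback when $\mathcal{S}$ is ``flat'', refined Gysin pullback when $\mathcal{S}$ is a regular embedding or l.c.i., and so on. By the Definition of \S\ref{equivchow} this is exactly an operation between the equivariant Chow groups $CH^G_{\ast}(X)$ and $CH^G_{\ast}(Y)$, after the index shift by $l-g$.

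The substantive point — and the step I expect to be the main obstacle — is to show the operation is independent of the auxiliary pair $(V,U)$, so that it descends to a well-defined operation on the equivariant groups (which are themselves independent of $(V,U)$ by the Proposition above). Given a second pair $(V',U')$, the projection $(X\times U\times V')/G\to (X\times U)/G$ is a vector bundle, so flat pullback along it is an isomorphism on Chow groups, and similarly for $Y$ and for the common refinement on $V\oplus V'$. That the operation built from $(V,U)$ matches, under these isomorphisms, the one built from $(V',U')$ is precisely the compatibility of ordinary proper pushforward / flat pullback / Gysin maps with flat pullback in the evident Cartesian square relating $(X\times U\times V')/G\to (Y\times U\times V')/G$ to $f_G$, which is part of the standard formalism of \cite{Fulton}. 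Hence the operations for all sufficiently large $(V,U)$ agree, giving a single equivariant operation.

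Finally, the usual relations — functoriality under composition, the projection formula, base change in Cartesian squares, commutativity of Gysin homomorphisms — hold for the approximations $X_G,Y_G,\dots$ by ordinary intersection theory, and, holding compatibly for all sufficiently large $(V,U)$, they descend to the equivariant operations. The equivariant cohomology statement is proved in the same way: $f$ induces $EG\times_G f\colon EG\times_G X\to EG\times_G Y$ directly, or, in the algebraic approximation, $f_G\colon X_G\to Y_G$ induces the pullback and Gysin maps in singular cohomology in the stable range, independent of $(V,U)$ by the same vector-bundle argument and satisfying the ordinary relations. This proves Proposition \ref{equivfunct}.
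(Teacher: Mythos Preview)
Your proposal is correct and is precisely the argument of \cite[Proposition~3]{EdidinGraham}, which is all the paper invokes: the paper's own proof of Proposition~\ref{equivfunct} is simply the one-line citation ``See \cite[Proposition~3]{EdidinGraham}.'' So you have supplied the details that the paper outsources, and your reduction to the approximations $X_G$, $Y_G$ together with the independence-of-$(V,U)$ check via the vector-bundle comparison is exactly the Edidin--Graham scheme.
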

\begin{proof}
See \cite[Proposition 3]{EdidinGraham}.
\end{proof}

If $X$ and $Y$ have $G$-actions then there are exterior products
$$
CH^G_i(X)\otimes CH^G_j(Y)\rightarrow  CH^G_{i+j}(X\times Y).
$$

In particular, if $X$ is smooth then there is an intersection product
on the equivariant Chow groups which makes $\oplus_j CH^G_j(X)$ 
into a graded ring.

\subsection{Cycle class maps} \cite[\S 2.8]{EdidinGraham}

Suppose $X$ is a complex algebraic variety and $G$ is a complex
 algebraic group. The equivariant Borel-Moore homology $H^G_{BM,i}(X)$ is the
 Borel-Moore homology  $H_{BM,i}(X_G)$, for $X_G=X\times_G U$. This is
 independent of the representation as long as $V-U$ has sufficiently large
 codimension. This gives a cycle class map,
$$
cl_i:CH^G_i(X)\rightarrow H_{BM,2i}^G(X,\mathbb{Z}),
$$
compatible with usual operations on equivariant Chow groups.
Suppose $X$ is smooth of dimension $d$ then $X_G$ is also smooth. In
 this case the Borel-Moore cohomology $H_{BM,2i}^G(X,\mathbb{Z})$ is dual to 
$H^{2d-i}(X_G)= H^{2d-i}(X\times_G U)$.

This gives cycle class maps
\begin{equation*}\label{cyclemap}
cl^i:CH^i_G(X)\rightarrow H^{2i}_G(X,\mathbb{Z}).
\end{equation*}

There are also maps from the equivariant groups to the usual groups:
\begin{equation}\label{liftcoh}
H^i_G(X,\mathbb{Z})\rightarrow H^i(X,\mathbb{Z})
\end{equation}
and
\begin{equation}\label{liftchow}
CH^i_G(X)\rightarrow CH^i(X).
\end{equation}

\begin{proposition}\label{Picequiv}
If $X$ is a smooth variety with a $G$-action, then the map
$$
\Pic^G(X)\rightarrow CH^1_G(X),\,\,L\mapsto c_1(L)
$$
is an isomorphism.
\end{proposition}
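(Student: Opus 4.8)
The plan is to compute both sides on a finite-dimensional approximation of the Borel construction and then invoke the classical identification of the Picard group with $CH^1$ for a smooth variety. Following \S\ref{equivchow}, choose an $l$-dimensional representation $V$ of $G$ together with a $G$-invariant open subset $U\subseteq V$ on which $G$ acts freely, with $V\setminus U$ of codimension at least $2$ (which is all that is needed to compute $CH^1_G(X)=CH^1(X_G)$), and put $X_G=(X\times U)/G$, a smooth algebraic space. By the Proposition on independence of the representation, $CH^1(X_G)$ does not depend on this choice, and the identifications constructed below will be compatible with enlarging $V$.

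\emph{Step 1: $Pic^G(X)\cong Pic(X_G)$.} The projection $X\times V\to X$ is a $G$-equivariant vector bundle, so pullback gives an isomorphism $Pic^G(X)\xrightarrow{\sim}Pic^G(X\times V)$ by $G$-equivariant homotopy invariance of Picard groups. Next, $X\times V$ is smooth and $X\times(V\setminus U)$ has codimension $\geq 2$ in it, so a $G$-equivariant line bundle corresponds to a $G$-invariant Weil divisor class, which extends uniquely across $X\times(V\setminus U)$; hence restriction $Pic^G(X\times V)\xrightarrow{\sim}Pic^G(X\times U)$ is an isomorphism. Finally $X\times U\to X_G$ is a $G$-torsor, since $G$ acts freely on $X\times U$ with quotient $X_G$, so fppf descent identifies $G$-equivariant line bundles on $X\times U$ with line bundles on $X_G$. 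Composing, one obtains an isomorphism $Pic^G(X)\xrightarrow{\sim}Pic(X_G)$ sending a $G$-equivariant line bundle $L$ to $L_G:=(L\times U)/G$.

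\emph{Step 2 and conclusion.} On the smooth algebraic space $X_G$ the first Chern class map $Pic(X_G)\to CH^1(X_G)$ is an isomorphism, by the classical identification of line bundles, Cartier divisor classes, and Weil divisor classes on a regular space. By definition $CH^1(X_G)=CH^1_G(X)$, and the equivariant first Chern class of $L$ is exactly the ordinary first Chern class of the induced bundle $L_G$ on $X_G$; hence the composite of Step 1 with $c_1$ is precisely the map $L\mapsto c_1(L)$ of the statement, and as a composite of isomorphisms it is an isomorphism. Naturality of each step in $V$ gives independence of the chosen representation.

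The step I expect to be the main obstacle is Step 1. Since the quotient $X_G$ is a priori only an algebraic space, the descent of line bundles along the torsor $X\times U\to X_G$, the codimension-$\geq 2$ purity used to extend equivariant line bundles over $X\times(V\setminus U)$, and the $G$-equivariant homotopy invariance must all be carried out in the category of algebraic spaces rather than schemes; one must also verify that these isomorphisms are compatible with the stabilization maps that render $Pic^G(X)$ and $CH^1_G(X)$ independent of $V$. Step 2 is classical, and the identification of the composite with $c_1$ in the last step is a matter of unwinding definitions.
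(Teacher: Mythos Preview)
Your argument is correct and follows the standard approach; the paper itself does not give an independent proof but simply cites \cite[Corollary~1]{Edidin}, whose underlying argument (via Edidin--Graham) is essentially the one you have written out: reduce to the finite-dimensional model $X_G=(X\times U)/G$, identify $Pic^G(X)$ with $Pic(X_G)$ by homotopy invariance, codimension-$\geq 2$ purity, and descent along the $G$-torsor, and then use the classical isomorphism $Pic\cong CH^1$ for smooth spaces. Your caveat about working in algebraic spaces is well placed but not an obstruction, since the relevant statements (descent of line bundles along torsors, and $Pic\cong CH^1$ for smooth algebraic spaces) hold in that generality.
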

\begin{proof}
See \cite[Corollary 1]{EdidinGraham}.
\end{proof}

\subsection{Equivariant Chow groups and moduli stacks: \cite{Edidin}} \label{singmoduli}

Suppose $X$ is a complex variety and $G$ is an algebraic group acting on $X$. Let $\mathcal{X}$ denote the quotient stack $[X/G]$.
We refer to \cite{Edidin}, for a discussion on quotient stacks and the cohomology of the quotient stacks, with integral coefficients. We have:

\begin{theorem}\label{stackequivthm}
1) There is an equality of cohomology rings :
$$
H^*(\mathcal{X})= H^*_G(X).
$$
2) There is an equality of Chow rings :
$$
CH^*(\mathcal{X})= CH^*_G(X).
$$
\end{theorem}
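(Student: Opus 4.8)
The plan is to deduce both equalities from a single principle already visible in the constructions of \S\ref{equivchow}: the quotient stack $\mathcal{X}=[X/G]$ is approximated, in each fixed cohomological degree (respectively each fixed codimension), by the honest algebraic spaces $X_G=(X\times U)/G$ attached to larger and larger finite-dimensional representations $V\supseteq U$ of $G$. Throughout I take $G$ to be a linear algebraic group, so that by generic freeness, for every integer $m$ there is a representation $V$ of $G$ carrying a $G$-invariant open $U\subseteq V$ on which $G$ acts freely and with $\mathrm{codim}_V(V\setminus U)>m$; these are precisely the data entering the definition recalled in \S\ref{equivchow}. The identification will then be obtained by chaining standard isomorphisms (homotopy invariance, localisation, free-quotient representability) and passing to the limit over $m$.

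For part (2), I would use that a quotient stack carries well-defined Chow groups $CH^i(\mathcal{X})$ (Edidin--Graham \cite{EdidinGraham}), functorial for flat pullback and proper pushforward and carrying the intersection product when $\mathcal{X}$ is smooth. Fix $i$ and choose $V\supseteq U$ as above with $\mathrm{codim}_V(V\setminus U)>i$. First, the projection $[(X\times V)/G]\to[X/G]$ is the rank-$(\dim V)$ vector bundle on $[X/G]$ associated to the tautological $G$-torsor $X\to[X/G]$, so homotopy invariance of Chow groups gives $CH^i([X/G])\xrightarrow{\ \sim\ }CH^i([(X\times V)/G])$. Second, $[(X\times(V\setminus U))/G]\hookrightarrow[(X\times V)/G]$ is a closed substack of codimension $>i$, so the localisation exact sequence identifies $CH^i([(X\times V)/G])$ with $CH^i$ of the open complement $[(X\times U)/G]$. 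Third, since $G$ acts freely on $X\times U$, the stack $[(X\times U)/G]$ is represented by the algebraic space $X_G$, so $CH^i([(X\times U)/G])=CH^i(X_G)=:CH^i_G(X)$. Composing gives $CH^i(\mathcal{X})\cong CH^i_G(X)$; this is a ring map because each of the three steps preserves the flat pullbacks and Gysin maps from which the intersection product is built, and it is independent of the choice of $(V,U)$ by the double-fibration argument of \cite[Definition--Proposition 1, Proposition 1]{EdidinGraham}, comparing two admissible pairs through $V_1\oplus V_2$.

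For part (1), I would recall that $H^*(\mathcal{X})$ is by construction the cohomology of the simplicial scheme $X_\bullet$ with $X_p=G^p\times X$ (the bar construction), equivalently of the topological Borel construction $EG\times_G X$; the fact that the bar construction computes Borel's equivariant cohomology is standard. Hence $H^*(\mathcal{X})=H^*(EG\times_G X)=H^*_G(X)$ as graded rings, the product on the left corresponding to the cup product on the Borel construction. To tie this to the algebraic approximation used in (2), and thereby make the cycle class maps \eqref{cyclemap} compatible with the maps \eqref{liftcoh}--\eqref{liftchow}, I would note that when $\mathrm{codim}_V(V\setminus U)$ is large the space $U$ is highly connected, so the inclusion $(X\times U)/G\hookrightarrow EG\times_G X$ induces isomorphisms on $H^i$ for all $i$ below a bound growing with $\mathrm{codim}_V(V\setminus U)$; letting this bound tend to infinity recovers $H^*_G(X)$ as the same direct limit that computes the Chow side in (2).

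The hard part is not any single step but the bookkeeping that makes the above canonical. Concretely: (i) one needs that a linear algebraic group admits representations whose free locus has arbitrarily high-codimension complement, and that the resulting groups do not depend on the chosen $(V,U)$ — this independence, via the Bogomolov double-fibration trick, is the technical core and is exactly what \cite{EdidinGraham} establishes; and (ii) one must check that the vector-bundle pullback, the localisation isomorphism, and the free-quotient identification are all multiplicative and commute with the cycle class maps, so that (1) and (2) fit together through \eqref{cyclemap}. Granting the results quoted in \S\ref{equivchow}, the theorem then follows by assembling them.
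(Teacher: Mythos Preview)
Your sketch is correct and is essentially an unpacking of the references the paper cites: the paper's own proof is nothing more than a pointer to \cite[Theorem 3.16, Proposition 3.26]{Edidin} and \cite[Proposition 19]{EdidinGraham}, and what you have written is a faithful outline of the argument found there (approximation by $(X\times U)/G$, homotopy invariance plus localisation plus free-quotient representability, and the Bogomolov double-fibration for independence of $(V,U)$).

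One small caveat worth flagging: your argument for (2) presupposes an \emph{a priori} definition of $CH^i(\mathcal{X})$ with homotopy invariance and localisation, and then identifies it with $CH^i_G(X)$. In \cite{EdidinGraham} the logic runs partly the other way --- the equivariant groups are the definition --- so the substantive content of their Proposition 19 is independence of the \emph{presentation} $\mathcal{X}\simeq[X/G]$ (not merely of $(V,U)$), and agreement with the intrinsic stack-theoretic Chow groups where those are defined. Your final paragraph gestures at this, but you should be explicit that this presentation-independence is what makes the statement nontrivial rather than tautological.
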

\begin{proof}
See \cite[Theorem 3.16]{Edidin} and \cite[Proposition 19]{EdidinGraham} or \cite[Proposition 3.26]{Edidin}.
\end{proof}

\begin{proposition}\label{stackequiv}
If $X$ is smooth of dimension $n$ then there is an isomorphism
\begin{equation*}
CH^k(\mathcal{X}) \rightarrow  CH^G_{n-k}(X)
\end{equation*}
where $\mathcal{X}=[X/G]$. Moreover the ring structure on $CH^*(\mathcal{X})$ is given by composition of operations, and is compatible with the ring structure on $CH^G_*(X)$,
given by equivariant intersection product.
\end{proposition}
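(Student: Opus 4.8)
The plan is to obtain this statement by combining Theorem~\ref{stackequivthm} with the dimension--codimension bookkeeping recorded in \S\ref{equivchow}, and then to check that the two multiplicative structures agree by computing both on a common smooth approximation. First, part~(2) of Theorem~\ref{stackequivthm} gives an identification of graded rings $CH^*(\mathcal{X}) = CH^*_G(X)$, where the grading on the right is by codimension. Since $X$ is smooth of pure dimension $n$, the computation displayed right after the definition of $CH^i_G(X)$ shows $CH^i_G(X) = CH^G_{n-i}(X)$: for an $l$-dimensional representation $V$ of $G$ with good open locus $U$, the quotient $X_G = (X\times U)/G$ is smooth of dimension $n + l - g$, and by definition $CH^i_G(X) = CH^i(X_G) = CH_{n+l-g-i}(X_G) = CH^G_{n-i}(X)$. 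Composing the two identifications and writing $k=i$ yields the asserted isomorphism $CH^k(\mathcal{X}) \cong CH^G_{n-k}(X)$.

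Next I would match the products. The product on $CH^*(\mathcal{X})$ is the one coming from composition of operational (bivariant) classes; under the isomorphism of Theorem~\ref{stackequivthm} it corresponds to the cup product on $CH^*_G(X) = CH^*(X_G)$, which, because $X_G$ is a \emph{smooth} algebraic space, is the ordinary intersection product of Fulton's theory, available on smooth algebraic spaces by \cite{EdidinGraham, Fulton}. On the other hand, the equivariant intersection product on $\bigoplus_j CH^G_j(X)$ is, by definition (see the paragraph following Proposition~\ref{equivfunct}), obtained by taking the exterior product in $CH_*((X\times X)_G)$ and pulling back along the diagonal regular embedding $X_G \hookrightarrow (X\times X)_G$ --- which is again nothing but the Fulton intersection product on the smooth space $X_G$. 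So both products are computed by the same operation on $CH^*(X_G)$, while the shift $CH^i(X_G) = CH_{n+l-g-i}(X_G)$ is exactly what converts the codimension grading into the dimension grading $CH^G_{n-k}(X)$. Hence the composite map is an isomorphism of graded rings, and is moreover compatible with the functoriality statements of Proposition~\ref{equivfunct}.

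One point that still needs care is that every object above is built from an auxiliary representation $V$, so one must verify independence of the choice of $V$ and compatibility with the transition maps attached to an inclusion $V \subset V'$; this is precisely the content of the Definition--Proposition of \cite{EdidinGraham} and is already used in Theorem~\ref{stackequivthm}, so no new input is required. Accordingly, I expect the only genuine obstacle here to be bookkeeping: aligning the grading conventions (codimension on the stack side versus dimension on the equivariant side) and the normalization of the two products so that a single map is visibly grading- and multiplication-preserving at the same time. Once these conventions are pinned down, the proposition is a formal consequence of Theorem~\ref{stackequivthm} and the definitions of \S\ref{equivchow}; essentially the same argument appears in \cite[Proposition 3.26]{Edidin}.
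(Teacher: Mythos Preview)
Your argument is correct and in fact supplies more detail than the paper itself, which simply refers the reader to \cite[Proposition 3.28]{Edidin} without further comment. Your route---combining Theorem~\ref{stackequivthm}(2) with the codimension/dimension identification $CH^i_G(X)=CH^G_{n-i}(X)$ from \S\ref{equivchow}, and then observing that both products are computed as the Fulton intersection product on the smooth approximation $X_G$---is exactly how the cited result is established in Edidin's survey, so there is no genuine difference in approach. The only small slip is the reference at the end: the paper points to Proposition~3.28 of \cite{Edidin}, not 3.26.
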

\begin{proof}
See \cite[Proposition 3.28]{Edidin}.
\end{proof}

Furthermore, if  $X$ is a smooth complex variety then there is a degree doubling cycle class map:
\begin{equation}\label{cyclemapstack}
cl: CH^*(\mathcal{X})\rightarrow H^*(\mathcal{X})
\end{equation}
having the same formal properties as the cycle class map on smooth complex varieties.

We will utilise the isomorphism in above proposition, to define the cohomology classes and relations amongst the Brill-Noether loci on the  moduli stack $\mathcal{U}_C(2, 2g-2)$.

\section{Tautological algebra generated by the Brill-Noether loci on $J_d(C)$}

In this section, we investigate the cohomology algebra generated by the Brill-Noether subvarieties of $J(C)$ and $J_d(C)$.  This problem is motivated by the classical Poincar\'e formula on $J(C)$.

\subsection{\textbf{Brill-Noether loci on $J(C)$}}
Let us fix a point $P\in C$.  Consider the classical Abel-Jacobi map $u:S^{d}(C) \longrightarrow J(C)$, where $u= \otimes \mathcal{O}(-dP)\circ \phi_d$ and $ \phi_d:S^d(C)  \rightarrow J_d(C)$, $\otimes \mathcal{O}(-dP):J_d(C)\rightarrow J(C)$ defined as follows. 
\begin{equation*}
\label{E100}
\xymatrix{ S^d(C)\ar[r]^{\phi_d} & J_d(C)
	\ar[r]^{\otimes \mathcal{O}(-dP)} &
	J(C)\\
	x_1+x_2+ \cdots +x_d\ar@{|->}[r] & \mathcal{O}(x_1+x_2+ \cdots +x_d)\ar@{|->}[r] & \mathcal{O}(x_1+x_2+ \cdots +x_d-dP).
}
\end{equation*}
\if
Therefore we have the following commutative diagram.
\begin{equation*}
\label{E200}
\xymatrix{ S^g(C)\ar[rr]^{\phi_g} && J_g(C)
	\ar[rr]^{\otimes \mathcal{O}(-gP)}\ar[d]_{\simeq}^{\otimes
		\mathcal{O}(-P)} &&
	J(C)\ar@{=}[d]\\ S^{g-1}(C)\ar[u]^{\text{inclusion}}\ar[rr]^{\phi_{g-1}}
	&& J_{g-1}(C)\ar[rr]^{\otimes \mathcal{O}(-(g-1)P)}
	\ar[d]_{\simeq}^{\otimes \mathcal{O}(-P)} & & J(C)\ar@{=}[d]
	\\ S^{g-2}(C)\ar[u]^{\text{inclusion}}
	\ar[rr]^{\phi_{g-2}} &&
	J_{g-2}(C)\ar[rr]^{\otimes\mathcal{O}(-(g-2)P)}
	\ar[d]_{\simeq}^{\otimes \mathcal{O}(-P)} & & J(C)\ar@{=}[d]
	\\ \vdots\ar[u]^{\text{inclusion}} && \vdots \ar[d]_{\simeq} &&
	\vdots\\ C \ar[u]^{\text{inclusion}}\ar[rr]^{\phi_1} && J_1(C)
	\ar[rr]^{\otimes (-P)} && J(C) \ar@{=}[u] }
\end{equation*}
The maps $\phi_d:S^d(C)  \rightarrow J_d(C)$'s for all $d$, $1\leq d\leq g$ are birational morphisms.  
\fi

Now define $W^0_{d}$, for all $d$, $1\leq d\leq g$, called the Brill-Noether subvarieties of J(C), as follows:
$$W^0_{d}:= u(S^{d}(C)).$$
Let $\Theta:=u(S^{g-1}(C))$.  The classical Poincar\'e relations determine the relations between the cohomological classes of $W^0_i$ on $J(C)$:
\begin{equation*}
[W^0_i]=\frac{1}{(g-i)!}[\Theta]^{g-i} \,\in\, H^*(J(C),\mathbb{Q}).
\end{equation*}
See {\cite[Chapter 1, \S 5,  p.~25]{ACGH}}.

\subsection{\textbf{Brill-Noether loci in $J_d(C)$}} 
For a fixed $d$, we recall the  Brill-Noether locus $W^r_d$, which are defined to be certain natural closed subschemes of $J_d(C)$ and discuss some of  its properties relevant to us.

\begin{definition}\label{set theoretic definition of Brill Noether loci}
	As a set, for $r\geq 0$, we define
	$$
	W^r_d:=\{L\in J_d(C): h^0(L)\geq r+1\}\subseteq J_d(C).
	$$ 
\end{definition}
It is clear from semicontinuity theorem (cf. \cite[Theorem 12.8 ; p. 288]{H}) that $W^r_d$ is closed. In fact, $W^r_d$ has a natural scheme structure as determinantal locus (cf. \cite[\S 4, Chapter \RomanNumeralCaps{2}; p. 83]{ACGH}) of certain morphisms of vector bundles over $J_d(C)$. 
We define these morphisms as follows:

Let us fix a Poincar\'e bundle $\mathcal{L}$ over $C\times J_d(C)$. Let $E$ be an effective divisor on $C$ with 
$$\text{deg}(E)= m \geq 2g-d-1 .$$
Let $\Gamma:= E\times J_d(C)$.
Then, over $C\times J_d(C)$ 
we have the exact sequence:

\begin{equation}\label{sequence over Ctimes J(C)}
0 \to \mathcal{L} \to \mathcal{L}(\Gamma) \to \mathcal{L}(\Gamma)|_{\Gamma} \to 0 .
\end{equation}

Let $v$ be the projection from $C\times J_d(C) \to J_d(C)$.
Now, applying the functor $v_*$ to the morphism 
$\mathcal{L}(\Gamma) \to \mathcal{L}(\Gamma)|_{\Gamma}$ as in (\ref{sequence over Ctimes J(C)}), we get a morphism 
$$\gamma:= v_*(\mathcal{L}(\Gamma))\to v_*(\mathcal{L}(\Gamma)\vert_{\Gamma}) .$$

Note that, by the choice of the degree of $E$ and Grauert's theorem (cf. \cite[ Corollary. 12.9, p. 288]{H}), we get that both $v_*(\mathcal{L}(\Gamma))$ and $v_*(\mathcal{L}(\Gamma)|_{\Gamma})$ are vector bundles of rank $d+m-g+1$ and $m$ respectively. 

\begin{definition}\label{scheme theoretic definition of Brill Noether loci}
	The Brill-Noether locus $W^r_d$ is defined to be the $(m+d-g-r)$-th determinantal locus associated to the morphism $\gamma$.
\end{definition}

To see that Definition \ref{scheme theoretic definition of Brill Noether loci} indeed agrees with Definition \ref{set theoretic definition of Brill Noether loci}, in the sense that the set theoretic support of \ref{scheme theoretic definition of Brill Noether loci} is exactly \ref{set theoretic definition of Brill Noether loci}, we refer to \cite[Lemma 3.1; p. 178]{ACGH}.

From general properties of determinantal loci, we have the following lemma:

\begin{lemma}\cite[Lemma 3.3; p. 181]{ACGH}\label{dimension of BN loci}
	Suppose $r\geq d-g$. Then every component of $W^r_d$ has dimension greater or equal to the Brill-Noether number 
	\begin{equation*}
	\rho:=g-(r+1)(g-d+r).
	\end{equation*}
\end{lemma}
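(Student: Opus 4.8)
The plan is to read the statement off directly from the determinantal description of $W^r_d$ recorded in Definition \ref{scheme theoretic definition of Brill Noether loci}, together with the classical codimension bound for determinantal loci. First I would fix the numerics. By construction $\gamma$ is a morphism of vector bundles over $J_d(C)$,
\[
\gamma\colon v_*(\mathcal{L}(\Gamma)) \longrightarrow v_*(\mathcal{L}(\Gamma)|_{\Gamma}),
\]
whose source has rank $e_1 := d+m-g+1$ and whose target has rank $e_2 := m$; this is precisely the rank computation recorded just after (\ref{sequence over Ctimes J(C)}), valid since $\deg E = m \geq 2g-d-1$ and Grauert's theorem applies. By Definition \ref{scheme theoretic definition of Brill Noether loci}, $W^r_d$ is the $k$-th determinantal locus of $\gamma$, where $k := m+d-g-r$, i.e. the locus on which the rank of $\gamma$ drops to at most $k$.

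Next I would invoke the general fact that the $k$-th determinantal locus of a morphism of vector bundles of ranks $e_1,e_2$ on an irreducible variety $X$, if nonempty, has codimension at most $(e_1-k)(e_2-k)$ in $X$, with every irreducible component satisfying this bound. This is the classical statement underlying Brill--Noether theory (see \cite[Chapter \RomanNumeralCaps{2}, \S 4]{ACGH}), ultimately resting on the Eagon--Northcott bound for the height of a determinantal ideal. Substituting $X = J_d(C)$ with $\dim X = g$, and the values above, one computes $e_1 - k = (d+m-g+1) - (m+d-g-r) = r+1$ and $e_2 - k = m - (m+d-g-r) = g-d+r$. Hence every component of $W^r_d$ has codimension at most $(r+1)(g-d+r)$ in $J_d(C)$, i.e. dimension at least
\[
g - (r+1)(g-d+r) = \rho,
\]
which is the claim; if $W^r_d = \varnothing$ the statement is vacuous.

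Finally I would point out where the hypothesis $r \geq d-g$ is used: it is equivalent to $g-d+r \geq 0$, which guarantees that the "expected codimension" $(r+1)(g-d+r)$ is nonnegative, so that the determinantal bound above is genuinely meaningful. Without it one can have $W^r_d = J_d(C)$ of dimension $g$ while $\rho > g$, and the inequality fails. The only genuinely nontrivial input is the determinantal codimension bound itself, which I would cite rather than reprove; everything else is bookkeeping with the ranks coming out of the exact sequence (\ref{sequence over Ctimes J(C)}).
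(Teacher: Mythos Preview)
Your proposal is correct and follows exactly the argument in \cite[Lemma 3.3; p.~181]{ACGH}, which is what the paper cites rather than reproves: namely, apply the general determinantal codimension bound from \cite[Chapter \RomanNumeralCaps{2}, \S 4]{ACGH} to the morphism $\gamma$ of Definition~\ref{scheme theoretic definition of Brill Noether loci}, and compute $(e_1-k)(e_2-k)=(r+1)(g-d+r)$. Your remark on the role of the hypothesis $r\geq d-g$ is also accurate; it is precisely the condition $k\leq e_2$ ensuring the expected codimension is nonnegative, and the paper records the complementary case $r\leq d-g-1$ (where $W^r_d=J_d(C)$) in the remark immediately following the lemma.
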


\begin{remark}
	Note that if $r\leq d-g-1$, then by Riemann-Roch theorem $W^r_d=J_d(C)$. So, from here onwards, we will assume that $r\geq d-g$. 
\end{remark}

In general, the above inequality can be strict (cf. \cite[Theorem 5.1; p. 191]{ACGH}). Even, in the case when equality holds, $W^r_d$ can have more than one components,  (cf. \cite[Chapter \RomanNumeralCaps{5}; p. 208]{ACGH}).

We recall the following theorem due to Griffith and Harris.

\begin{theorem}\label{dimension of BN loci for general curve}\
a)	For any smooth projective curve $C$ of genus $g$ 
$$
\dim(W^r_d)\geq \rho .
$$
b) For a general curve $C$
$$
\dim(W^r_d)= \rho.
$$
Furthermore,
$$
	[W^r_d]=\prod\limits^r_{\alpha=0}\dfrac{\alpha!}{(g-d+r+\alpha)!}.[\Theta_d]^{g-\rho}.
	$$
\end{theorem}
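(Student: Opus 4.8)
The plan is to prove the three assertions in turn: (a) is an immediate consequence of the determinantal description of $W^r_d$; (b) is the deep statement, obtained by combining an existence theorem with a specialization argument; and (c) follows from the Thom--Porteous formula together with a Grothendieck--Riemann--Roch computation. For (a), recall from Definition \ref{scheme theoretic definition of Brill Noether loci} that $W^r_d$ is the $(m+d-g-r)$-th degeneracy locus of the morphism $\gamma\colon v_*(\mathcal{L}(\Gamma))\to v_*(\mathcal{L}(\Gamma)|_\Gamma)$ over $J_d(C)$, between bundles of ranks $d+m-g+1$ and $m$. Its expected codimension is $(d+m-g+1-(m+d-g-r))(m-(m+d-g-r))=(r+1)(g-d+r)$, and the general lower bound for degeneracy loci (Lemma \ref{dimension of BN loci}, \cite[Chapter II, \S 4]{ACGH}) shows that every component has codimension at most this, hence dimension at least $\rho=g-(r+1)(g-d+r)$. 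This holds for every smooth curve, which is exactly (a).

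For (b) I would establish the reverse inequality $\dim W^r_d\le\rho$ for a general curve by a degeneration argument. Concretely, one exhibits a single curve $C_0$ of genus $g$ --- for instance a chain of $g$ elliptic curves, or a $g$-cuspidal rational curve --- on which $\dim W^r_d(C_0)\le\rho$; this is checked by analysing limit linear series on $C_0$ (Eisenbud--Harris) and verifying that a $g^r_d$ imposes there the expected number of conditions. Upper-semicontinuity of fibre dimension in the universal family of Jacobians, with their Brill--Noether loci, over $\mathcal{M}_g$ then propagates the bound to the general fibre; combined with (a) this gives $\dim W^r_d=\rho$ for general $C$. To ensure the loci are actually nonempty of dimension $\rho$ (rather than empty) when $\rho\ge 0$, one invokes the Kempf / Kleiman--Laksov existence theorem, which follows from the ampleness of a suitable tautological class on $C\times J_d(C)$ and the Fulton--Lazarsfeld connectedness theorem for degeneracy loci; the same circle of ideas yields connectedness, and (for $\rho\ge 1$) irreducibility, of $W^r_d$. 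All of this is the content of the Griffiths--Harris theorem; see \cite[Chapters V, VII]{ACGH} and \cite{Gr-Ha2}.

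For (c), once $\dim W^r_d=\rho$ the degeneracy locus has the expected codimension $(r+1)(g-d+r)=g-\rho$, so it is Cohen--Macaulay and the Thom--Porteous formula computes its class as
\begin{equation*}
[W^r_d]=\det\bigl(c_{g-d+r+j-i}\bigl(v_*(\mathcal{L}(\Gamma)|_\Gamma)-v_*(\mathcal{L}(\Gamma))\bigr)\bigr)_{1\le i,j\le r+1}.
\end{equation*}
It remains to evaluate the Chern classes occurring here. The defining sequence (\ref{sequence over Ctimes J(C)}), after applying $Rv_*$ and using that the higher direct images vanish (because $\deg E$ is large and $\Gamma\to J_d(C)$ is finite), gives in $K$-theory $v_!(\mathcal{L})=v_*(\mathcal{L}(\Gamma))-v_*(\mathcal{L}(\Gamma)|_\Gamma)$, so the virtual bundle above is $-v_!(\mathcal{L})$; applying Grothendieck--Riemann--Roch to $v\colon C\times J_d(C)\to J_d(C)$ with the Poincar\'e bundle $\mathcal{L}$ suitably normalized, one finds $c\bigl(-v_!(\mathcal{L})\bigr)=e^{\theta}$ with $\theta=[\Theta_d]$ (all contributions of $\Gamma$ and of the twist cancel; this is the computation underlying the classical Poincar\'e formula, \cite[Chapter VIII, \S\S 2--3]{ACGH}). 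Substituting $c_k=\theta^k/k!$ into the determinant factors out $\theta^{(r+1)(g-d+r)}=\theta^{g-\rho}$ and leaves the Vandermonde-type determinant $\det\bigl(1/(g-d+r+j-i)!\bigr)_{1\le i,j\le r+1}$, which equals $\prod_{\alpha=0}^r\alpha!/(g-d+r+\alpha)!$; hence
\begin{equation*}
[W^r_d]=\prod_{\alpha=0}^r\frac{\alpha!}{(g-d+r+\alpha)!}\,[\Theta_d]^{g-\rho}.
\end{equation*}

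The main obstacle is part (b): parts (a) and (c) are essentially formal, being determinantal-locus geometry and Chern-class bookkeeping once the dimension is known to be the expected one, whereas the upper bound $\dim W^r_d\le\rho$ for a general curve is genuinely hard --- it requires producing a special curve on which the Brill--Noether count is sharp and controlling the relevant degenerations of linear series.
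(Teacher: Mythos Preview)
Your outline is correct and in fact considerably more detailed than what the paper does: the paper treats this theorem as a known result and simply cites \cite[Main Theorem; p.~235]{Gr-Ha2} (Griffiths--Harris) for the proof. Your sketch is essentially the standard argument recorded in \cite{ACGH} and \cite{Gr-Ha2}: part (a) from the determinantal lower bound, part (c) from Thom--Porteous plus the GRR computation of $c(-v_!\mathcal{L})=e^\theta$, and part (b) by degeneration. One small historical remark: the reference the paper actually invokes (Griffiths--Harris, 1980) degenerates to a $g$-nodal or $g$-cuspidal rational curve, not to a chain of elliptic curves; the latter is the later Eisenbud--Harris limit-linear-series approach. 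You mention both options, which is fine, but if you want to match the cited source you should foreground the rational-curve degeneration.
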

\begin{proof}
	See \cite[Main Theorem; p. 235]{Gr-Ha2}.
\end{proof}
When $\rho=0$ the above formula is called the Castelnuovo formula.  Regarding the irreducibility, we have:

\begin{theorem} \label{irreducibility of BN loci for general curve}
	If $C$ is general and $\rho > 0$, then $W^r_d$ is irreducible.
\end{theorem}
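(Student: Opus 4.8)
The plan is to prove the irreducibility statement by pulling back to the symmetric product $S^d(C)$ via the Abel--Jacobi map, where the relevant incidence variety has a transparent fibration structure. The key object is the classical "Brill--Noether incidence variety"
\[
G^r_d := \{ (L, V) : L \in J_d(C),\ V \subseteq H^0(L),\ \dim V = r+1 \},
\]
which maps properly and surjectively onto $W^r_d$ with a natural scheme structure as a subvariety of a Grassmann bundle over $J_d(C)$. First I would recall that when $\rho > 0$, the fibre of $G^r_d \to W^r_d$ over a general point of each component is a single point (indeed $h^0(L) = r+1$ generically on each component, by the general-curve hypothesis and Theorem \ref{dimension of BN loci for general curve}), so $G^r_d \to W^r_d$ is birational onto its image on every component; hence it suffices to show $G^r_d$ is irreducible. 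This is the standard reduction, and it is where the genericity of $C$ enters in an essential but soft way.

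Next I would analyze $G^r_d$ directly. The standard approach (following the Fulton--Lazarsfeld method, as in \cite[Chapter \RomanNumeralCaps{5}]{ACGH}) is to realize $W^r_d$ as a degeneracy locus $M_{m+d-g-r}(\gamma)$ of the bundle map $\gamma$ from Definition \ref{scheme theoretic definition of Brill Noether loci}, and then to invoke the connectedness theorem for degeneracy loci: if the expected dimension $\rho = \dim J_d(C) - (r+1)(g-d+r)$ is positive and the relevant Chern class / ampleness positivity hypothesis holds, then the degeneracy locus is connected. Combined with the fact that for a general curve $W^r_d$ is reduced of pure dimension $\rho$ with smooth points away from $W^{r+1}_d$ (again Theorem \ref{dimension of BN loci for general curve} and the determinantal description), connected plus pure-dimensional plus generically smooth yields irreducible. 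So the logical skeleton is: (i) $W^r_d$ has pure dimension $\rho$ and is generically smooth (general curve); (ii) $W^r_d$ is connected (degeneracy-locus positivity, since the tautological bundle on $J_d(C)$ is suitably positive); (iii) a connected, generically reduced, pure-dimensional scheme whose singular locus has smaller dimension is irreducible.

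The main obstacle is step (ii), the connectedness: one must check that the vector bundle $\mathrm{Hom}\big(v_*\mathcal{L}(\Gamma),\, v_*(\mathcal{L}(\Gamma)|_\Gamma)\big)^\vee$ — equivalently, a suitable twist of the tautological bundle over $J_d(C)$ coming from the Poincar\'e line bundle — is ample enough for the connectedness theorem of Fulton--Lazarsfeld to apply in the range $\rho > 0$. The positivity needed is exactly that $\Theta$ (or the associated tautological class) is ample on $J_d(C)$, which it is, so the Fulton--Lazarsfeld criterion applies; the only care required is bookkeeping of which determinantal locus and which twist occurs, and confirming that the expected codimension $(r+1)(g-d+r)$ is what makes $\rho>0$ the right hypothesis. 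An alternative, and perhaps cleaner, route avoiding any new positivity input is to cite directly that for $C$ general the pair $(W^r_d, G^r_d)$ has the structure described in \cite[Chapter \RomanNumeralCaps{5}, \S 1--2; p.~203--214]{ACGH}, where irreducibility of $W^r_d$ for $\rho > 0$ on a general curve is established as part of the Gieseker--Petri package — the tangent-space computation showing $G^r_d$ is smooth of dimension $\rho$ at every point, which forces $G^r_d$ irreducible (a smooth variety is irreducible iff connected, and connectedness again comes from the degeneracy-locus argument). I would present the proof as the short citation-driven version, remarking that the substantive content is the Gieseker--Petri smoothness of $G^r_d$ together with the Fulton--Lazarsfeld connectedness, both available for general $C$.
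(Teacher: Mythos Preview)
Your proposal is correct and aligns with the paper's approach: the paper gives no independent argument but simply cites \cite[Corollary 2.4; p.~280]{Fu-La}, and what you have sketched is precisely the content behind that citation --- the Fulton--Lazarsfeld connectedness theorem for degeneracy loci (using ampleness of the Poincar\'e bundle on $J_d(C)$), combined with pure-dimensionality and generic smoothness of $W^r_d$ for a general curve (the Griffiths--Harris/Gieseker--Petri input) to upgrade connectedness to irreducibility. Your alternative route via smoothness of $G^r_d$ is also standard and amounts to the same thing, so there is no substantive divergence from the paper.
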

\begin{proof}
	See \cite[Corollary 2.4; p. 280]{Fu-La}.
\end{proof}
\if
Now, recall that in the case when $C$ is general, by Theorem \ref{T4} we have that the N\'eron Severi group of $J_d(C)$ is generated by a translate of the $\Theta$ divisor in $J(C)$. We denote this class as $\theta_d$. In particular, this implies that the class of $W^r_d$ can be written in terms of powers of $\theta_d$. 
\fi

\subsection{Tautological algebra generated by the Brill-Noether loci in $J(\widetilde{C})$}\label{JSpec}

Let $\pi:\widetilde{C}\to C$ be a spectral curve which was defined in \S \ref{spectral}.
In this section we investigate the  subalgebra of $H^{\ast}(J(\widetilde{C}),\mathbb{Q})$ generated by the Brill-Noether loci on $J(\widetilde{C})$. Towards this, we consider the case when we have a ramified double cover $\pi:\widetilde{C}\to C$. \\
Let $\mathcal{R}_g^r$ denote the moduli space of ramified two sheeted  covering of  a connected smooth projective curves of genus $g$ with fixed ramification $r$.  Then we have the following theorem.
\begin{theorem}\label{T6}
	The N\'eron-Severi group of the Jacobian of a general element of $\mathcal{R}_g^r$ is generated by two elements; the two elements are obtained from the decomposition (up to isogeny) of the Jacobian of a covering curve (see \S \ref{Prym}).  Furthermore, the N\'eron-Severi group generates the algebra of Hodge cycles (of positive degree) on the Jacobian of the general double cover.
\end{theorem}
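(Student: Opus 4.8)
The plan is to transport everything through the isogeny of Theorem~\ref{T3} and then feed in the Mumford--Tate computation for the Jacobian of a general spectral curve. Put $A:=J(C)\times P$, so that Theorem~\ref{T3} provides an isogeny $\phi\colon A\to J(\widetilde C)$. Since an isogeny induces isomorphisms of rational Hodge structures $\phi^{\ast}\colon H^{k}(J(\widetilde C),\mathbb{Q})\xrightarrow{\ \sim\ }H^{k}(A,\mathbb{Q})$ for all $k$, it identifies $\mathrm{NS}(J(\widetilde C))_{\mathbb{Q}}$ with $\mathrm{NS}(A)_{\mathbb{Q}}$, the Hodge classes $H^{2p}_{Hodge}(J(\widetilde C))$ with $H^{2p}_{Hodge}(A)$ for every $p$, and the subalgebra $D^{\ast}(J(\widetilde C))$ generated by divisor classes with $D^{\ast}(A)$. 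Thus it suffices to prove both assertions for $A$.

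First I would compute $\mathrm{NS}(A)_{\mathbb{Q}}$ from the K\"unneth decomposition
\begin{equation*}
H^{2}(A,\mathbb{Q})=H^{2}(J(C),\mathbb{Q})\ \oplus\ \bigl(H^{1}(J(C),\mathbb{Q})\otimes H^{1}(P,\mathbb{Q})\bigr)\ \oplus\ H^{2}(P,\mathbb{Q}).
\end{equation*}
On an abelian variety the Hodge $(1,1)$-classes coincide with $\mathrm{NS}_{\mathbb{Q}}$ (Lefschetz $(1,1)$, Remark~\ref{R1}), while the Hodge classes in the mixed summand are canonically $\mathrm{Hom}(J(C),P)\otimes\mathbb{Q}$; hence
\begin{equation*}
\mathrm{NS}(A)_{\mathbb{Q}}\ \cong\ \mathrm{NS}(J(C))_{\mathbb{Q}}\ \oplus\ \bigl(\mathrm{Hom}(J(C),P)\otimes\mathbb{Q}\bigr)\ \oplus\ \mathrm{NS}(P)_{\mathbb{Q}}.
\end{equation*}
For a general element of $\mathcal{R}_{g}^{r}$ one has $\mathrm{NS}(J(C))_{\mathbb{Q}}=\mathbb{Q}\,\theta_{C}$ (the base is a general curve, cf.\ Theorem~\ref{T4}), $\mathrm{NS}(P)_{\mathbb{Q}}=\mathbb{Q}\,\xi_{P}$ for the induced Prym polarization $\xi_{P}$, and $\mathrm{Hom}(J(C),P)\otimes\mathbb{Q}=0$ because $J(C)$ and $P$ have no common isogeny factor; the last two facts are part of, or immediate consequences of, the Mumford--Tate computation of \cite{B} for general $\widetilde C$, and can alternatively be deduced by a degeneration argument as in the proof of Theorem~\ref{T4}. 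Therefore $\mathrm{NS}(J(\widetilde C))_{\mathbb{Q}}$ has rank $2$, a basis being the two classes obtained via $\phi$ from the principal polarization of $J(C)$ and the Prym polarization of $P$, i.e.\ from the two factors of the decomposition in Theorem~\ref{T3}.

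For the second assertion I would invoke the Hodge conjecture for $J(\widetilde C)$, valid for general $\widetilde C$ by the Mumford--Tate computation of \cite{B}: the Mumford--Tate group of $J(\widetilde C)$ is as large as its polarization and the decomposition $A\sim J(C)\times P$ permit, namely $\mathrm{GSp}\bigl(H^{1}(J(C))\bigr)\times_{\mathbb{G}_{m}}\mathrm{GSp}\bigl(H^{1}(P)\bigr)$ through $\phi$, so every Hodge class of $J(\widetilde C)$ is a Mumford--Tate invariant. By the classical description of $\mathrm{Sp}$-invariants in an exterior algebra, applied to each factor, these invariants form the subring generated by the two symplectic forms, i.e.\ by $\theta_{C}$ and $\xi_{P}$. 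Hence $D^{p}(J(\widetilde C))=H^{2p}_{Hodge}(J(\widetilde C))$ for all $p$, and by the first part $D^{\ast}(J(\widetilde C))$ is generated by $\mathrm{NS}(J(\widetilde C))$; combining the two yields that $\mathrm{NS}(J(\widetilde C))$ generates the algebra of Hodge cycles of positive degree. (Equivalently, since $\mathrm{Hom}(J(C),P)_{\mathbb{Q}}=0$ one has $D^{\ast}(A)=D^{\ast}(J(C))\otimes D^{\ast}(P)$, with $D^{\ast}(J(C))=\mathbb{Q}[\theta_{C}]$ by Theorem~\ref{T4} and $D^{\ast}(P)=\mathbb{Q}[\xi_{P}]$ by the analogous statement from \cite{B}.)

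I expect the main obstacle to be the Mumford--Tate/Hodge-conjecture input for the general spectral curve: that for general $\widetilde C$ the abelian varieties $J(C)$ and $P$ have Picard number one, share no isogeny factor, and carry full symplectic Mumford--Tate groups, so that all Hodge cycles on $J(\widetilde C)$ are generated by divisor classes. This is precisely the content borrowed from \cite{B}, whose proof degenerates the double cover to a configuration reducible to already-settled cases --- Mattuck's theorem for a general polarized abelian variety and Tate's theorem for self-products of an elliptic curve --- in the spirit of the degeneration argument behind Theorem~\ref{T4}. One should also note that $\mathcal{R}_{g}^{r}$ is irreducible, so that ``a general element'' is meaningful; that the two generating classes have well-defined preimages on $J(\widetilde C)$ is automatic, as $\phi^{\ast}$ is an isomorphism on $\mathrm{NS}_{\mathbb{Q}}$.
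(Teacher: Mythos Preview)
The paper does not prove Theorem~\ref{T6}; it simply records a citation: ``See \cite[Corollary~5.3; p.~634]{B}.'' Your proposal is therefore not competing with a proof in the paper but rather sketching the argument that lies behind the cited result. What you outline---transporting via the isogeny of Theorem~\ref{T3} to $J(C)\times P$, decomposing $\mathrm{NS}$ by K\"unneth, and then invoking the Mumford--Tate computation to conclude that the Hodge ring is generated by the two polarization classes---is indeed the shape of the argument in \cite{B}, and you correctly identify that the substantive input (Picard number one for $P$, $\mathrm{Hom}(J(C),P)_{\mathbb{Q}}=0$, and the full symplectic Mumford--Tate group on each factor) is exactly what is borrowed from that reference.

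One small point of precision: Theorem~\ref{T6} is stated for a general element of $\mathcal{R}_g^r$, not for a general spectral curve; you occasionally slide between the two (e.g.\ in your final paragraph). The passage from general double covers to general spectral curves is handled separately in the paper as Theorem~\ref{Bis}, via the dominant rational map $\mathcal{S}^0_{g,s}\to\mathcal{R}_g^r$. Keeping these distinct would make your write-up cleaner, but it does not affect the correctness of your outline for Theorem~\ref{T6} itself.
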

\begin{proof}
	See {\cite[Corollary 5.3; p.~634]{B}}.
\end{proof}
Note that even if $C$ is general, $\widetilde{C}$ may not be general. However, in our situation, we will check that the above theorem still holds.

\begin{theorem}\label{Bis}
	The cohomology class of a Brill-Noether locus on the Jacobian  
	$J(\widetilde{C})$ of a general $2$-sheeted spectral curve $\pi:\widetilde{C}\rightarrow C$ can be expressed  
	as a sum of powers of divisor classes. In particular the tautological algebra is generated by the divisor classes.
\end{theorem}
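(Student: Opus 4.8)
The plan is to combine three ingredients: (i) the classical formula of Theorem \ref{dimension of BN loci for general curve} expressing $[W^r_\delta]$ on a Jacobian as a rational multiple of $[\Theta_{\widetilde{C}}]^{g-\rho}$ whenever the covering curve is \emph{general}; (ii) the fact (Theorem \ref{T6}, from \cite{B}) that for a general ramified double cover the N\'eron--Severi group of $J(\widetilde{C})$ is generated by two divisor classes, and moreover these generate the full algebra of Hodge cycles; and (iii) the observation, to be verified, that although $\widetilde{C}$ need not be a general curve, the spectral curve $\pi:\widetilde{C}\to C$ (for $C$ general) is a general element of the relevant moduli space $\mathcal{R}_g^r$ of ramified double covers, with $r=4g-4$ branch points as computed around (\ref{E4})--(\ref{E5}). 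The conclusion then follows: the class $[\widetilde{W^r}]$ of any Brill--Noether locus on $J(\widetilde{C})$ is a Hodge class of pure even degree, hence by (ii) lies in the subring $D^{\ast}(J(\widetilde{C}))$ generated by divisor classes, i.e. it is a sum of powers (and products) of the two N\'eron--Severi generators.

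The first step I would carry out is to pin down which moduli space $\widetilde{C}$ lives in and to argue genericity. The spectral curve construction of Section \ref{spectral} produces $\widetilde{C}_s$ inside $\mathbb{P}(\mathcal{O}_C\oplus L^{\ast})$ as the zero scheme of a section of $\pi^{\ast}L^2\otimes\mathcal{O}(2)$, depending on $s=(s_1,s_2)\in\Gamma(L)\oplus\Gamma(L^2)$, with $L=K_C$ so that $\deg L=2g-2$; the induced double cover $\pi:\widetilde{C}\to C$ is ramified at the $4g-4$ zeros of the discriminant section. I would show that, as $s$ varies, the resulting point of $\mathcal{R}_g^{4g-4}$ (the curve $\widetilde{C}$ together with the double-cover structure) sweeps out a dense subset, so that ``general $s$'' gives a ``general element of $\mathcal{R}_g^{4g-4}$''; this is essentially a dimension count comparing $\dim\bigl(\Gamma(K_C)\oplus\Gamma(K_C^2)\bigr)$ modulo the automorphisms/reparametrisations with $\dim\mathcal{R}_g^{4g-4}$, together with the fact that the involution on $\widetilde{C}$ is determined. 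Once this genericity is in hand, Theorem \ref{T6} applies verbatim to $J(\widetilde{C})$.

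The second step is bookkeeping: by (\ref{E3}) with $n=2$, the degree of the line bundle $l\in J_\delta(\widetilde{C})$ is $\delta=d+2(g-1)$, and the relevant Brill--Noether loci $\widetilde{W^r}\subset J_\delta(\widetilde{C})$ have (for general $\widetilde{C}$) dimension exactly $\rho=\widetilde{g}-(r+1)(\widetilde{g}-\delta+r)$, with cohomology class a rational multiple of $[\Theta_{\widetilde C}]^{\widetilde{g}-\rho}$ by Theorem \ref{dimension of BN loci for general curve}(b). In particular $[\widetilde{W^r}]$ is automatically a polynomial in the single class $[\Theta_{\widetilde{C}}]$, which is even stronger than the asserted statement; but stating it via the two N\'eron--Severi generators (the pullback $\pi^{\ast}\Theta_C$-type class and the Prym polarisation from Theorem \ref{T3}) is the formulation that survives when one later descends to the moduli space, which is presumably why the theorem is phrased as it is. I would therefore present the proof as: identify the two generators $\theta_1,\theta_2$ of $\mathrm{NS}(J(\widetilde{C}))_{\mathbb Q}$ coming from the isogeny decomposition, note $[\Theta_{\widetilde C}]$ is itself an integral combination of them, and conclude $[\widetilde{W^r}]\in\mathbb{Q}[\theta_1,\theta_2]$.

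The main obstacle I anticipate is precisely the genericity claim in the first step: Theorem \ref{T6} and the Griffiths--Harris formula both require a \emph{general} double cover / \emph{general} curve, but the spectral curves arising here form a proper subfamily of all curves of genus $\widetilde{g}=4g-3$, cut out by the condition of mapping $2:1$ to a fixed general $C$. One must check that this subfamily is nonetheless Zariski-dense in $\mathcal{R}_g^{4g-4}$ (not in $\mathcal{M}_{\widetilde g}$), and that the Hodge-theoretic conclusion of \cite{B} — the equality of the algebra of Hodge cycles with the subring generated by $\mathrm{NS}$, proved there via a Mumford--Tate group computation — is an open condition that therefore holds for our spectral curves. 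The remark in the excerpt ``even if $C$ is general, $\widetilde{C}$ may not be general. However, in our situation, we will check that the above theorem still holds'' signals exactly this point, and I would expect the real content of the proof to be the verification that the spectral-curve family meets the dense locus where the Mumford--Tate group is as small as possible. $\Box$
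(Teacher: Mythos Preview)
Your proposal is correct and follows essentially the same route as the paper: both reduce to verifying that a general spectral double cover is a general point of $\mathcal{R}_g^{4g-4}$, so that Theorem~\ref{T6} applies and every Hodge class on $J(\widetilde{C})$---in particular every Brill--Noether class---lies in the subring generated by divisors. The paper makes the genericity precise by exhibiting a dominant rational map $\mathcal{S}^0_{g,s}\to\mathcal{R}_g^r$ from a moduli space of triples $(C,L,s)$, which is exactly your ``dimension count''; note however that your ingredient (i), the Griffiths--Harris formula, is unnecessary and, as you yourself flag, does not apply directly since $\widetilde{C}$ need not be general in $\mathcal{M}_{\widetilde g}$---the paper's argument uses only Theorem~\ref{T6}.
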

\begin{proof}
We only need to check that Theorem \ref{T6} can be applied to the  Jacobian of a general spectral curve. Fix a degree $d>0$.
Denote  $\mathcal{S}_{g,s}$ the moduli space of tuples 
$$
\{(C, L, s=(s_0,s_1)\},
$$
 where $C$ is a curve of genus $g$, $L$ is a line bundle on $C$ of degree $d$, and $s_0\in H^0(C,L),\, s_1\in H^0(C, L^2)$.
This moduli space can be interpreted as the moduli space of spectral curves, as in \S \ref{sec:Spectral curves}.  There is a dominant rational map (on the component $\mathcal{S}^{0}_{g,s}$ where $(s_0=0)$)
$$
\mathcal{S}^{0}_{g,s}\rightarrow \mathcal{R}_g^r\rightarrow \mathcal{U}_g.
$$
Here $r$ is the ramification type corresponding to a general section $s$ equivalently the zeroes of the equation \eqref{SC}  (cf. \cite{BCV} for a similar moduli space).  
The maps are given by
$$
(C, L, s) \mapsto (C, L, B(s)) \mapsto C,
$$
where $B$ is the branch divisor  of the spectral curve $\tilde{C_s}\rightarrow C$, such that $L^2=\mathcal{O}(B)$.
Since $J(\tilde{C_s})$ depends only the ramification type $B$ and $L$,  Theorem \ref{T6} can be applied to the Jacobian of a  general spectral curve.

\end{proof}


\section{Brill-Noether loci on ${U}_C(n,d)$}\label{sec:Brill-Noether loci on U}

To define the Brill-Noether loci for ${U}_C(n,d)$, we start with a more general set up. Let $S$ be an algebraic scheme over $\mathbb{C}$. Let $\mathcal{E}$ be a vector bundle  over $C\times S$ such that for all  $s\in S$, $\mathcal{E}_s:=\mathcal{E}|_{C\times s}$ is a vector bundle of rank $n$ and degree $d$ over $C$.

Just as in Definition \ref{set theoretic definition of Brill Noether loci}, we have the following definition of the Brill-Noether locus as a closed set.
\begin{definition}\label{set theoretic definition of Brill Noether loci for Moduli spaces}
We define the Brill-Noether locus $W^{r}_{S,\mathcal{E}}$ associated to a pair $(S,\mathcal{E})$:
$$
W^r_{S,\mathcal{E}}:=\{s\in S| h^0(C,\mathcal{E}_s)\geq r+1 \} .
$$
\end{definition}
By \cite[Lemma 1.7.6; p. 28]{Hu-Ln}, since the family $\mathcal{E}$ is a bounded family, we can choose a divisor $D$ in $C$ of sufficiently high degree such that $h^1(C,\mathcal{E}_s(D))=0$ for all $s\in S$. For notational convenience, we continue to denote the pullback of $D$ to $C\times S$ by $D$. Then, over $C\times S$ we have the exact sequence:
$$
0 \to \mathcal{E} \to \mathcal{E}(D) \to \mathcal{E}(D)|_{D}\to 0 .
$$

Let $v:C\times S\to S$ be the projection.

Then, we have the morphism 
$$
f:v_*(\mathcal{E}(D))\to v_*(\mathcal{E}(D)|_{D}) .
$$

Now for any $s\in S$ we have $h^1(C,\mathcal{E}(D)_s)=h^1(C,\mathcal{E}_s(D))=0$. By Riemann-Roch theorem we get 
\begin{align*}
h^0(C,\mathcal{E}(D)_s)= & ~d+n~ \text{deg}(D)+n(1-g), \\
h^0(C,(\mathcal{E}(D)|_D)_s)= & ~n~\text{deg}(D).
\end{align*}
Hence, by \cite[Theorem 12.11; p. 290]{H}, we get that both $v_*(\mathcal{E}(D))$ and $v_{*}(\mathcal{E}(D)|_{D})$ are vector bundles and for any $s\in S$, we have isomorphisms:
\begin{equation}\label{isomorphisms}
\begin{split}
v_*(\mathcal{E}(D))|_s \xrightarrow{\cong} & H^0(C,\mathcal{E}|_{C\times s}(D)),\\
 v_{*}(\mathcal{E}(D)|_{D})|_s \xrightarrow{\cong} & H^0(C,\mathcal{E}|_{C\times s}(D)|_{D}).
\end{split}
\end{equation}
Using Riemann-Roch theorem, we get that 
\begin{align*}
\text{rank}(v_*(\mathcal{E}(D)))             = &~ d+n~\text{deg}(D)+n(1-g),  \\
\text{rank}(v_{*}(\mathcal{E}(D)|_{D})) = & ~n ~\text{deg}(D) .
\end{align*} 
\begin{definition}\label{scheme theoretic definition of Brill Noether loci for Moduli spaces}
We define $W^r_{S,\mathcal{E}}$ to be the $(d+n~ \text{deg}(D)+n(1-g) -(r+1))$-th determinantal locus associated to the morphism $f$. 
\end{definition}
\begin{remark}
To see that the set-theoretic support of \ref{scheme theoretic definition of Brill Noether loci for Moduli spaces} is indeed \ref{set theoretic definition of Brill Noether loci for Moduli spaces}, note that we have the following commutative diagram:

\[
\begin{tikzcd}
 &  & v_*(\mathcal{E}(D))|_s \arrow[r,"f|_s"] \arrow[d,"\cong"] &  v_{*}(\mathcal{E}(D)|_{D})|_s \arrow[d,"\cong"]\\
0 \arrow[r] & H^0(C,\mathcal{E}_s) \arrow[r] & H^0(C,\mathcal{E}_s(D)) \arrow[r] & H^0(C,\mathcal{E}_s(D)|_{D})
\end{tikzcd}
\]

Hence, 
$$
 \text{rank}(f|_s)\leq d+n~ \text{deg}(D)+n(1-g) -(r+1) \Longleftrightarrow h^0(C,\mathcal{E}_s)\geq r+1.
 $$

From this, it follows that Definition \ref{scheme theoretic definition of Brill Noether loci for Moduli spaces} agrees with Definition \ref{set theoretic definition of Brill Noether loci for Moduli spaces}.
\end{remark}
\begin{lemma}\label{dimension of Brill-Noether loci of moduli spaces}
If $W^r_{S,\mathcal{E}}\neq \emptyset$, then, codimension of each component of $W^r_{S,\mathcal{E}}\leq (r+1)(r+1-d+n(g-1))$.
\end{lemma}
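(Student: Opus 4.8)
The plan is to recognize $W^r_{S,\mathcal{E}}$ as a degeneracy locus of a morphism of vector bundles on $S$ and then quote the classical codimension estimate for such loci. By Definition~\ref{scheme theoretic definition of Brill Noether loci for Moduli spaces}, $W^r_{S,\mathcal{E}}$ is the $k$-th determinantal locus
$$
W^r_{S,\mathcal{E}}=\{\,s\in S \mid \operatorname{rk}(f|_s)\leq k\,\},\qquad k:=e-(r+1),
$$
of the morphism $f\colon \mathcal{A}\to\mathcal{B}$ constructed above, where $\mathcal{A}=v_*(\mathcal{E}(D))$ and $\mathcal{B}=v_*(\mathcal{E}(D)|_{D})$ are locally free on $S$ of ranks $e=d+n\deg D+n(1-g)$ and $e'=n\deg D$ respectively, as was checked above via Grauert's theorem together with cohomology and base change.

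The next step is to invoke the general fact that, for a homomorphism $f\colon\mathcal{A}\to\mathcal{B}$ of vector bundles of ranks $e$ and $e'$ on a Noetherian scheme $S$, if the locus $\{\operatorname{rk} f\leq k\}$ is non-empty then every one of its irreducible components has codimension at most $(e-k)(e'-k)$ in $S$; see \cite[Chapter \RomanNumeralCaps{2}, \S 4; p.~83]{ACGH} or \cite[Chapter 14]{Fulton}. Working locally on $S$, where $f$ is represented by an $e'\times e$ matrix with entries in $\mathcal{O}_S$ and the locus is cut out by the ideal generated by the $(k+1)\times(k+1)$ minors, this is exactly Macaulay's bound on the height of a determinantal ideal, which needs no smoothness or Cohen--Macaulayness hypothesis on $S$. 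It then remains only to evaluate the two factors: by the definition of $k$ we have $e-k=r+1$, while
$$
e'-k=n\deg D-\bigl(d+n\deg D+n(1-g)-(r+1)\bigr)=(r+1)-d+n(g-1).
$$
Hence each component of $W^r_{S,\mathcal{E}}$ has codimension at most $(r+1)\bigl((r+1)-d+n(g-1)\bigr)$, which is the assertion.

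The statement carries no essential obstacle; the two points that demand care are the bookkeeping of the ranks $e$, $e'$ and the index $k$, so that the product $(e-k)(e'-k)$ reproduces the stated expression, and the use of the form of the determinantal codimension bound valid over an arbitrary base scheme — which is what is available here, since $S$ is a general algebraic scheme over $\mathbb{C}$ rather than a smooth or Cohen--Macaulay one.
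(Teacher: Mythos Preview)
Your proof is correct and follows exactly the paper's approach: the paper's proof is the single line ``This follows from \cite[\S 4, Chapter \RomanNumeralCaps{2}; p.~83]{ACGH}'', and you have simply unpacked this citation by spelling out the ranks $e$, $e'$, the index $k$, and verifying $(e-k)(e'-k)=(r+1)(r+1-d+n(g-1))$. One minor quibble: the local freeness of $v_*(\mathcal{E}(D))$ over an arbitrary (possibly nonreduced) base $S$ was established in the paper via cohomology and base change \cite[Theorem 12.11]{H} rather than Grauert's theorem per se, but since you are citing the paper's preceding discussion this does not affect the argument.
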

\begin{proof}
This follows from \cite[\S 4, Chapter \RomanNumeralCaps{2}; p. 83]{ACGH}.
\end{proof}
\begin{lemma}\label{functoriality of Brill-Noether loci}
Let $S_1, S_2$ be two algebraic schemes over $\mathbb{C}$ and let $\mathcal{E}$ be a bundle on $C\times S_2$ such that for all $s\in S_2,~\mathcal{E}_s$ is a vector bundle of rank $n$ and degree $d$. If $g:S_1\to S_2$ be a morphism, then
\begin{equation*}
g^{-1}(W^r_{S_2,\mathcal{E}})=W^r_{S_1,(id_C \times g)^* \mathcal{E}}.
\end{equation*} 
\end{lemma}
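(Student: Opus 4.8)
The plan is to verify the compatibility of the determinantal construction of the Brill-Noether locus with base change. First I would fix the divisor $D$ on $C$ used in Definition \ref{scheme theoretic definition of Brill Noether loci for Moduli spaces} for the family $(S_2,\mathcal{E})$; since $\mathcal{E}$ is bounded on $C\times S_2$, the same $D$ works for the pulled-back family $(S_1,(id_C\times g)^*\mathcal{E})$ on $C\times S_1$, because $H^1(C,((id_C\times g)^*\mathcal{E})_s(D))=H^1(C,\mathcal{E}_{g(s)}(D))=0$ for all $s\in S_1$ (a fiber over $s\in S_1$ of the pulled-back bundle is canonically the fiber over $g(s)$ of $\mathcal{E}$). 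Thus both Brill-Noether loci are built from the same ambient data, with consistent choices.

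Next I would trace through the functoriality of $v_*$ under base change. Let $v_2:C\times S_2\to S_2$ and $v_1:C\times S_1\to S_1$ be the projections, and let $G=id_C\times g:C\times S_1\to C\times S_2$, so $v_2\circ G = g\circ v_1$. By the choice of $D$, the sheaves $v_{2*}(\mathcal{E}(D))$ and $v_{2*}(\mathcal{E}(D)|_D)$ are locally free and their formation commutes with arbitrary base change (this is exactly the content of \cite[Theorem 12.11; p. 290]{H} / cohomology and base change, since $H^1$ vanishes on all fibers). Hence there are canonical isomorphisms
\begin{equation*}
g^*v_{2*}(\mathcal{E}(D))\;\cong\;v_{1*}\bigl(G^*\mathcal{E}(D)\bigr),\qquad
g^*v_{2*}(\mathcal{E}(D)|_D)\;\cong\;v_{1*}\bigl(G^*\mathcal{E}(D)|_{D}\bigr),
\end{equation*}
and under these identifications the morphism $f_2:v_{2*}(\mathcal{E}(D))\to v_{2*}(\mathcal{E}(D)|_D)$ pulls back to the corresponding morphism $f_1$ for the family on $S_1$; this is because the exact sequence $0\to\mathcal{E}\to\mathcal{E}(D)\to\mathcal{E}(D)|_D\to 0$ on $C\times S_2$ pulls back via $G$ to the analogous sequence on $C\times S_1$ (note $G^*\mathcal{E}(D)=\bigl((id_C\times g)^*\mathcal{E}\bigr)(D)$ since $D$ is pulled back from $C$), and $v_{1*}$ applied to this pulled-back sequence yields $f_1$.

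Finally I would invoke the base-change compatibility of determinantal loci: for a morphism of vector bundles $f_2:\mathcal{F}\to\mathcal{G}$ on $S_2$ and a morphism $g:S_1\to S_2$, the $k$-th determinantal locus of $g^*f_2$ is the scheme-theoretic preimage under $g$ of the $k$-th determinantal locus of $f_2$, since determinantal loci are cut out by the vanishing of minors of a local matrix representation, and this formation commutes with pullback of the coordinate ring. Applying this with $f_2$, $f_1=g^*f_2$, and $k=d+n\deg D+n(1-g)-(r+1)$ gives exactly
\begin{equation*}
g^{-1}W^r_{S_2,\mathcal{E}}\;=\;W^r_{S_1,(id_C\times g)^*\mathcal{E}}
\end{equation*}
as schemes. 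The only mildly delicate point — the part I would be most careful about — is making the identification $G^*\mathcal{E}(D)=\bigl((id_C\times g)^*\mathcal{E}\bigr)(D)$ and the base-change isomorphisms for $v_*$ genuinely canonical and compatible, so that $f_1$ really is $g^*f_2$ and not merely isomorphic to it after some non-canonical choice; once that bookkeeping is clean, the statement follows formally.
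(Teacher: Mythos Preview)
Your proof is correct and follows essentially the same approach as the paper: set up the base-change square with $v_1,v_2,G,g$, use cohomology-and-base-change (the paper cites the fibrewise isomorphisms (\ref{isomorphisms}), which amounts to the same thing) to identify $g^*v_{2*}$ with $v_{1*}G^*$ on both bundles so that $f_1=g^*f_2$, and then invoke the compatibility of determinantal loci with pullback. Your write-up is simply more explicit about the choice of $D$ and the canonicity bookkeeping than the paper's terse version.
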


\begin{proof}
Let $v_1:C\times S_1\to S_1$ and $v_2:C\times S_2\to S_2$ be the projections.
Let $G:=id_C\times g:C\times S_1\to C\times S_2$.
Then we have the following commutative diagram:
\[
\begin{tikzcd}
C\times S_1 \arrow[r,"G"] \arrow[d,"v_1"] & C\times S_2 \arrow[d,"v_2"] \\
        S_1 \arrow[r,"g"]                            &         S_2
\end{tikzcd}
\]
This induces the following commutative diagram:
\[
\begin{tikzcd}
g^*(v_2)_*(\mathcal{E}(D)) \arrow[r] \arrow[d] & g^*(v_2)_*(\mathcal{E}(D)|_{D}) \arrow[d] \\
(v_1)_*G^*(\mathcal{E}(D)) \arrow[r] & (v_1)_*G^*(\mathcal{E}(D)|_{D}) 
\end{tikzcd}
\]
By (\ref{isomorphisms}), we get that the vertical arrows in the above diagram are isomorphisms.  Now, the lemma follows from general properties of determinantal loci. 
\end{proof}

Now suppose $\widetilde{C}$ be a smooth projective curve of genus $\widetilde{g}$ and $\pi:\widetilde{C} \to C$ be a finite morphism. Let $\mathcal{E}$ be a vector bundle over $\widetilde{C}\times S$ such that $\mathcal{E}_s$ is of rank $n$ and degree $d$ for all $s\in S$. Since the map $\pi \times id:\widetilde{C} \times S \to C\times S$ is a finite flat morphism, we get that $(\pi \times id)_*\mathcal{E}$ is a vector bundle over $C\times S$ and in fact,
$$
((\pi \times id)_*\mathcal{E})_s=\pi_*(\mathcal{E}_s) ~\text{for all}~ s\in S.
$$
We will denote this bundle $(\pi \times id)_*\mathcal{E}$ by $\mathcal{E}'$. Note that rank of $\mathcal{E}'$ is 
$$
n':=n~\text{deg}(\pi),\, \text{for all}~ s\in S.
$$  
Let $d':={\rm deg}(\mathcal E'_s)$.
Since $\mathcal E'_s=\pi_*(\mathcal E_s)$, by Riemann-Roch we have 
$$
d+n(1-g)=\chi(\widetilde{C},\mathcal E_s)=\chi(C,\mathcal E'_s)=d'+n'(1-g).
$$
Hence we have 
$$
d'=d+n(1-\widetilde{g})-n~\text{deg}(\pi)(1-g).
$$  
Then we have the following lemma:
\begin{lemma}\label{Brill Noether loci of pushforward of a bundle}
$W^r_{S,\mathcal{E}}=W^r_{S,\mathcal{E}'}\,.$
\end{lemma}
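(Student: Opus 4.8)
The statement to prove is Lemma~\ref{Brill Noether loci of pushforward of a bundle}: that $W^r_{S,\mathcal{E}}=W^r_{S,\mathcal{E}'}$, where $\mathcal{E}'=(\pi\times \mathrm{id})_*\mathcal{E}$. The plan is to reduce the equality of these two determinantal subschemes of $S$ to the equality of the cohomology groups parametrized fiberwise, using that pushforward along the finite flat morphism $\pi\times\mathrm{id}$ is exact and commutes with base change.

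First I would argue at the level of sets. For any $s\in S$ we have the Leray (in fact Čech, since $\pi$ is affine) identity $H^0(\widetilde{C},\mathcal{E}_s)=H^0(C,\pi_*\mathcal{E}_s)=H^0(C,\mathcal{E}'_s)$, because $\pi$ is finite hence affine, so $R^i\pi_*\mathcal{E}_s=0$ for $i>0$ and the Leray spectral sequence degenerates; this is exactly the kind of identity already used in \eqref{LSS}. Consequently $h^0(\widetilde{C},\mathcal{E}_s)\geq r+1$ if and only if $h^0(C,\mathcal{E}'_s)\geq r+1$, which by Definition~\ref{set theoretic definition of Brill Noether loci for Moduli spaces} gives the set-theoretic equality $W^r_{S,\mathcal{E}}=W^r_{S,\mathcal{E}'}$ on points.

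To upgrade this to an equality of schemes (determinantal loci), I would compare the two defining morphisms of vector bundles on $S$. Choose the divisor $D$ on $C$ as in the construction for $\mathcal{E}'$, and use $\widetilde{D}:=\pi^*D$ (or a divisor of the form $\pi^{-1}(D)$) on $\widetilde{C}$ for the construction for $\mathcal{E}$; after enlarging $D$ we may assume $H^1$ vanishes in both families simultaneously. The key point is the projection-formula / base-change chain: since $\pi\times\mathrm{id}$ is finite flat, $(\pi\times\mathrm{id})_*\big(\mathcal{E}(\widetilde{D})\big)=\mathcal{E}'(D)$ and $(\pi\times\mathrm{id})_*\big(\mathcal{E}(\widetilde{D})|_{\widetilde{D}}\big)=\mathcal{E}'(D)|_{D}$ (using $\widetilde{D}=\pi^*D$ so that $\pi_*\mathcal{O}_{\widetilde{D}}=\pi_*\mathcal{O}_{\widetilde C}\otimes\mathcal{O}_D$ compatibly), and the short exact sequence $0\to\mathcal{E}\to\mathcal{E}(\widetilde D)\to\mathcal{E}(\widetilde D)|_{\widetilde D}\to 0$ pushes forward to the corresponding sequence for $\mathcal{E}'$ because pushforward along an affine morphism is exact. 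Applying $v_*$ then identifies the morphism $f$ attached to $(\widetilde{C},\mathcal{E})$ with the morphism $f'$ attached to $(C,\mathcal{E}')$ as morphisms of the \emph{same} pair of vector bundles on $S$ — this uses $v\circ(\pi\times\mathrm{id})=v$ on the two product spaces. Since the two determinantal subschemes are cut out by the same matrix of functions, the appropriate minors coincide; one only has to check that the determinantal rank condition translates correctly, i.e.\ that $(d'+n'\deg D+n'(1-g))-(r+1)$ on the $\mathcal{E}'$ side matches $(d+n\deg\widetilde D+n(1-g))-(r+1)$ on the $\mathcal{E}$ side, which follows from $n'=n\deg\pi$, $\deg\widetilde D=(\deg\pi)(\deg D)$, and the relation $d'=d+n(1-\widetilde g)-n(\deg\pi)(1-g)$ derived just above together with \eqref{E1}. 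Then invoke Lemma~\ref{functoriality of Brill-Noether loci}-style general properties of determinantal loci (invariance under the isomorphisms of the ambient bundles) to conclude the scheme-theoretic equality.

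The main obstacle I anticipate is purely bookkeeping rather than conceptual: making the two auxiliary divisors genuinely compatible so that the pushforward of the twisting sequence on $\widetilde{C}\times S$ is literally the twisting sequence on $C\times S$ (not merely isomorphic up to a choice), and then tracking the shift in the determinantal index through the rank formulas so that ``$(m+d-g-r)$-th determinantal locus'' on one side is identified with the correct determinantal locus on the other. Once the divisor $\widetilde D=\pi^*D$ is fixed, the rank arithmetic is the Riemann–Roch computation already carried out in the paragraph preceding the lemma, so there is no real difficulty — only care is needed.
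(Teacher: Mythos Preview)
Your proposal is correct and follows essentially the same route as the paper: choose the auxiliary divisor on $\widetilde{C}$ to be $\pi^*D$, use the projection formula along the finite map $\pi\times\mathrm{id}$ to identify $(\pi\times\mathrm{id})_*\big(\mathcal{E}(\pi^*D)\to\mathcal{E}(\pi^*D)|_{\pi^*D}\big)$ with $\mathcal{E}'(D)\to\mathcal{E}'(D)|_D$, then observe that since $\widetilde{v}=v\circ(\pi\times\mathrm{id})$ the two determinantal loci are cut by the \emph{same} morphism $\widetilde{v}_*f$ of bundles on $S$, and finally verify via Riemann--Roch that the determinantal indices coincide. The only difference is that you preface this with a separate set-theoretic argument via the Leray identity, which the paper omits as redundant once the scheme-level identification is made.
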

\begin{proof}
We have the commutative diagram:
\[
\begin{tikzcd}
\widetilde{C} \times S \ar[r,"\pi \times id"] \arrow[dr,"\widetilde{v}"'] & C\times S \ar[d,"v"] \\
                                             & S
\end{tikzcd}
\]
Fix $D$ a divisor on $C$ such that $h^1(\mathcal{E}'_s(D))=0~\text{for all}~ s\in S$. Then $$h^1(\widetilde{C}, \mathcal{E}_s(\pi^{*}D))=h^1(C,\pi_*(\mathcal{E}_s(\pi^*D)))=h^1(C,\mathcal{E}'_s(D))=0$$
Therefore we can use the divisor $\pi^*D$ for the construction of $W^r_{S,\mathcal{E}}$.

Let us denote the morphism 
$$
f:\mathcal{E}(\pi^*D)\to (\mathcal{E}(\pi^*D))|_{\pi^*D}.
$$ 
Then $W^r_{S,\mathcal{E}}$ is defined to be the 
$
(d+n~\text{deg}(\pi^*D)+n(1-\widetilde{g}) -(r+1))
$-th determinantal locus of the morphism $\widetilde{v}_*f$. Now $\widetilde{v}=v\circ (\pi \times id)$.
  It follows from projection formula that $(\pi \times id)_*f$ is nothing but the morphism 
  $$
  \mathcal{E}'(D)\to \mathcal{E}'(D)|_D
  $$
   and therefore, $W^r_{S,\mathcal{E}'}$ is the $(d'+n'~\text{deg}(D)+n'(1-g)-(r+1))$-th determinantal locus of $v_*(\pi \times id)_*f=\widetilde{v}_*f$. It can be checked easily that 
   $$
   d'+n'~\text{deg}(D)+n'(1-g)-(r+1)=d+n~\text{deg}(\pi^*D)+n(1-\widetilde{g})-(r+1).
   $$
\end{proof}
Next, we will define the Brill-Noether Loci for ${U}_C(n,d)$. Note that if $(n,d)=1$, we have a universal bundle over $C\times {U}_C(n,d)$ (cf. \cite[Corollary 4.6.6; p. 119]{Hu-Ln}) and hence, we can apply the previous construction to get the notion of the Brill-Noether loci in this case. However, in general we don't have a universal bundle. 

Recall that ${U}_C(n,d)$ was constructed as a good quotient of certain Quot Schemes (cf. \cite[ \S 4.3; p. 88]{Hu-Ln}). We recall the definition of this Quot Scheme.
Fix a line bundle ${O}(1)$ of degree $1$ over $C$. 
Choose an $m \gg 0$ such that any semistable vector bundle $E$ over $C$ of rank $n$ and degree $d$ is $m$-regular. 

In particular, we have that 
\begin{equation}\label{equation for N}
\begin{split}
(1)\; &  h^1(C,E(m))=0\\
(2)\; & h^0(C,E(m))=d+mn+n(1-g)=:N\\
(3)\; & \text{The natural map}\; H^0(C,E(m))\otimes \mathcal{O}\to E(m) \;\text{is surjective}.
\end{split}
\end{equation}
Now, define $\mathcal{Q}$ to be the Quot Scheme parametrizing quotients of $\mathcal{O}^N$ of rank $n$ and degree $d+mn$. Let 
$$
\mathcal{O}^N_{C\times \mathcal{Q}}\to \mathcal{F}
$$ 
be the universal quotient.

Note that the group scheme $\GL(N)$ acts on $\mathcal{Q}$ in the following manner:

Let $T$ be an algebraic scheme over $\mathbb{C}$.

Let $g\in \GL(N)(T)$ be an automorphism $\mathcal{O}^N_{C\times T}\xrightarrow{g} \mathcal{O}^N_{C\times T}$. Let $[\mathcal{O}^N_{C\times T}\to F_T]\in \mathcal{Q}(T)$.

Then, define 
$$
g.[\mathcal{O}^N_{C\times T}\to F_T] := [\mathcal{O}^N_{C\times T}\xrightarrow{g} \mathcal{O}^N_{C\times T}\to F_T].
$$ 

It is clear that this action in fact factors through an action of the group scheme $\PGL(N)$.

Let $\mathcal{R}\subseteq \mathcal{Q}$ be the open subset such that for all $x\in \mathcal{R}$, 
$\mathcal{F}|_{C\times x}$ is a semistable bundle and $H^0(C,\mathcal{O}^N) \to H^0(C,\mathcal{F}|_{C\times x})$ is an isomorphism.  It is immediate that $\mathcal{R}$ is $\PGL(N)$-equivariant. Then, we define 

$$
{U}_C(n,d):=\mathcal{R}\sslash_{\PGL(N)}\,.
$$
and we have the quotient map

\begin{equation}\label{classifyingmap}
\mu:\mathcal{R}\to {U}_C(n,d) .
\end{equation}

Let us denote $\mathcal{F}|_{C\times \mathcal{R}}$ by $\mathcal{F}'$.
By Definition \ref{scheme theoretic definition of Brill Noether loci for Moduli spaces} we have the closed subscheme $W^r_{\mathcal{R},\mathcal{F}'(-m)}\subseteq \mathcal{R}$.

\begin{lemma}\label{GLn-invariantlemma}
 $W^r_{\mathcal{R},\mathcal{F}'(-m)}$ is $\GL(N)$-equivariant (consequently $\PGL(N)$-equivariant as well), and compatible with the $\GL(N)$-action on $\mathcal{R}$.
\end{lemma}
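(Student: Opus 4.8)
The plan is to show that the construction of $W^r_{\mathcal{R},\mathcal{F}'(-m)}$ from Definition \ref{scheme theoretic definition of Brill Noether loci for Moduli spaces} is natural with respect to the $GL(N)$-action, so that the determinantal locus is carried to itself by every group element. First I would recall that the $GL(N)$-action on $\mathcal{R}$ is, by construction, induced by precomposing a quotient $[\mathcal{O}^N_{C\times T}\to F_T]$ with an automorphism $g$ of $\mathcal{O}^N_{C\times T}$; in particular, writing $a_g:\mathcal{R}\to \mathcal{R}$ for the action of $g\in GL(N)$, there is a canonical isomorphism of sheaves on $C\times \mathcal{R}$
$$
(id_C\times a_g)^{*}\mathcal{F}' \xrightarrow{\ \cong\ } \mathcal{F}',
$$
because both are the universal quotient pulled back along the same underlying map of quotients (the automorphism $g$ changes only the surjection $\mathcal{O}^N\to\mathcal{F}'$, not the quotient sheaf itself up to canonical isomorphism). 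Twisting by $-m$, the same holds for $\mathcal{F}'(-m)$, so $(id_C\times a_g)^{*}\bigl(\mathcal{F}'(-m)\bigr)\cong \mathcal{F}'(-m)$ as families of rank $n$, degree $d$ bundles on $C$ parametrized by $\mathcal{R}$.

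Next I would invoke the functoriality of the Brill-Noether construction already proved in Lemma \ref{functoriality of Brill-Noether loci}: for the morphism $a_g:\mathcal{R}\to\mathcal{R}$ we have
$$
a_g^{-1}\, W^r_{\mathcal{R},\mathcal{F}'(-m)} \;=\; W^r_{\mathcal{R},\,(id_C\times a_g)^{*}(\mathcal{F}'(-m))}.
$$
Combining this with the isomorphism $(id_C\times a_g)^{*}(\mathcal{F}'(-m))\cong \mathcal{F}'(-m)$ from the previous step, and using that the determinantal locus of Definition \ref{scheme theoretic definition of Brill Noether loci for Moduli spaces} depends only on the isomorphism class of the family (the degeneracy locus of $f$ is insensitive to replacing the vector bundles $v_*(\mathcal{E}(D))$, $v_*(\mathcal{E}(D)|_D)$ by isomorphic ones, since conjugating a matrix of functions does not change the rank conditions), we conclude
$$
a_g^{-1}\, W^r_{\mathcal{R},\mathcal{F}'(-m)} \;=\; W^r_{\mathcal{R},\mathcal{F}'(-m)}
$$
as closed subschemes, for every $g\in GL(N)$. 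This is precisely the statement that $W^r_{\mathcal{R},\mathcal{F}'(-m)}$ is $GL(N)$-invariant, and the compatibility with the action is built into the functorial identification. Since the action factors through $PGL(N)$, the same conclusion holds for $PGL(N)$.

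The main obstacle — really the only subtle point — is verifying cleanly that $(id_C\times a_g)^{*}\mathcal{F}'\cong \mathcal{F}'$ with an isomorphism that is compatible with the formation of $v_*$ and of the restriction to the divisor $D$, i.e. that the isomorphism can be chosen so that the induced diagram of morphisms $f$ commutes up to the natural isomorphisms of the source and target bundles. This amounts to unwinding the universal property of the Quot scheme: the action $a_g$ is defined so that the pullback of the universal quotient is the original quotient precomposed with $g$, and both have the same kernel-cokernel sheaf; one then checks that $v_*$ applied to the twisted exact sequence $0\to \mathcal{F}'(-m)\to \mathcal{F}'(-m)(D)\to \mathcal{F}'(-m)(D)|_D\to 0$ transforms equivariantly. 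I expect this to be a routine but slightly tedious diagram chase using the projection formula and flat base change, of the same flavor as the proof of Lemma \ref{functoriality of Brill-Noether loci}, and I would present it at that level of detail. Finally, granting the invariance, Lemma \ref{equivclass} then furnishes the equivariant fundamental class $[W^r_{\mathcal{R},\mathcal{F}'(-m)}]_{GL(N)}\in CH^{GL(N)}_{*}(\mathcal{R})$, which is what the later sections require.
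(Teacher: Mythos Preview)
Your proposal is correct and follows essentially the same approach as the paper: both arguments hinge on Lemma~\ref{functoriality of Brill-Noether loci} together with the observation that the $GL(N)$-action changes only the presenting surjection $\mathcal{O}^N\to\mathcal{F}'$ and not the quotient sheaf, so the pulled-back family is isomorphic to the original. The only cosmetic difference is that the paper phrases the computation in terms of arbitrary $T$-valued points $q:T\to W^r_{\mathcal{R},\mathcal{F}'(-m)}$ and $g\in GL(N)(T)$, whereas you work with the action automorphism $a_g:\mathcal{R}\to\mathcal{R}$; the content is identical.
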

\begin{proof} 
   Let $q:T\to W^r_{\mathcal{R},\mathcal{F}'(-m)}$ be a $T$-valued point of $W^r_{\mathcal{R},\mathcal{F}'(-m)}$.
Let $\mathcal{O}^N_{C\times T}\to F_T$ be the pullback of the universal quotient under  $q$. By Lemma \ref{functoriality of Brill-Noether loci}, we get that 
$W^r_{T,F_T(-m)}=q^{-1}(W^r_{\mathcal{R},\mathcal{F}'(-m)})=T$.
Let $g\in \GL(N)(T)$. By definition, the quotient corresponding to 
$g.q:T\to \mathcal{R}$
is given by 
$$
\mathcal{O}^N_{C\times T}\xrightarrow{g} \mathcal{O}^N_{C\times T} \to F_T .
$$
Again, by Lemma \ref{functoriality of Brill-Noether loci} we have
$$
(g.q)^{-1}(W^r_{\mathcal{R},\mathcal{F}'(-m)})=W^r_{T,F_{T}(-m)}=T .
$$
In other words, we get that $g.q:T\to \mathcal{R}$ factors through $W^r_{\mathcal{R},\mathcal{F}'(-m)}$. Hence the closed subscheme $W^r_{\mathcal{R},\mathcal{F}'(-m)}$ is $\GL(N)$-equivariant.
\end{proof}

\if
Since,
 $$
 \pi:\mathcal{R}\to \mathcal{R}//\PGL(N)
 $$
 is a good quotient, $B^r_{\mathcal{R},\mathcal{F}'(-m)}$ descends to a closed subscheme.

\begin{definition}
We define $B^r_{{U}_{C}(n,d)}(\forall)\subseteq {U}_{C}(n,d)$ to be the closed subscheme whose pullback to $\mathcal{R}$ is $B^r_{\mathcal{R},\mathcal{F}'(-m)}$.
\end{definition}
Note that as sets,
$$
|B^r_{\mathcal{U}_{C}(n,d)}(\forall)|=\{ e \in U_{C}(n,d)~|~ H^0(C,E)\geq r+1 ~ \forall E\in e\}.
$$
\fi
\begin{definition}\label{Brill-Noether loci over semistable locus}
We define the Brill-Noether locus $\widetilde{W^r_{n,d}}(C)$ to be the scheme theoretic image of $W^r_{\mathcal{R},\mathcal{F}'(-m)}$ under the morphism $\mu$. In other words,
$$
\widetilde{W^r_{n,d}}(C)= \mu (W^r_{\mathcal{R},\mathcal{F}'(-m)}).
$$

\end{definition}
\begin{notation}
We will denote $\widetilde{W^r_{n,d}}(C)$ by $\widetilde{W^r_{n,d}}$ when there is no chance of confusion.
\end{notation}

\begin{remark}\label{GLn-invariant}
Note that since the morphism 
$$
\mu:\mathcal{R}\to {U}_C(n,d)
$$
is a good quotient and $W^r_{\mathcal{R},\mathcal{F}'(-m)}$ is $\PGL(N)$-equivariant, we get that $\mu(W^r_{\mathcal{R},\mathcal{F}'(-m)})$ is a closed subset of ${U}_{C}(n,d)$. Hence, as sets
$\widetilde{W^r_{n,d}}=\mu(W^r_{\mathcal{R},\mathcal{F}'(-m)})$.  That is to say, denoting the $S$-equivalence class of a semistable bundle $E$ over $C$ by $e$, we get
\begin{equation} \label{B-L in semistable loci}
\widetilde{W^r_{n,d}}=\{ e \in \mathcal{U}_{C}(n,d)~|~\exists\; E\in e \text{ such that } h^0(C,E)\geq r+1 \} .
\end{equation}
\end{remark}

Let us denote by ${U}^s_C(n,d)$ the moduli space of stable bundles on $C$ of rank $n$ and degree $d$. Recall that ${U}^s_{C}(n,d)$ is an open subset of 
${U}_C(n,d)$.

\begin{definition}\label{Brill-Noether loci over stable locus}
We define the Brill-Noether locus $W^r_{n,d}$ of ${U}^s_{C}(n,d)$ to be the closed subscheme 
$$
W^r_{n,d}:=\widetilde{W^r_{n,d}} \cap {U}^s_{C}(n,d)\subset {U}^s_C(n,d).
$$
\end{definition}
\begin{remark}
Let $\mathcal{R}^s\subseteq \mathcal{R}$ be the set of all $x\in \mathcal{R}$ such that $\mathcal{F}'|_{C\times x}$ is stable.  Let $\mathcal{F}'':= \mathcal{F}'|_{C\times \mathcal{R}^s}$. Let $\mu_s:\mathcal{R}^s\to {U}^s_{C}(n,d)$ be the restriction of $\mu$ to $\mathcal{R}^s$. Then, $W^r_{n,d}$ is the scheme-theoretic image of $W^r_{\mathcal{R}^s,\mathcal{F}''(-m)}$ under the map $\mu_s$.
\end{remark}

Now, using the fact that $\mu_s:\mathcal{R}^s\to {U}^s_C(n,d)$ is a principal $\PGL(N)$-bundle (cf. \cite[Corollary 4.3.5; p. 91]{Hu-Ln}), and Lemma \ref{dimension of Brill-Noether loci of moduli spaces} we have the following lemma:

\begin{lemma}\label{dimension of brill noether loci for stable locus}
If $W^r_{n,d}\neq \emptyset$, then dimension of each component of  $W^r_{n,d}$ is at least 
$$
n^2(g-1)+1-(r+1)(r+1-d+n(g-1)).
$$
\end{lemma}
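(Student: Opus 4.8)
The plan is to transport the determinantal dimension bound of Lemma \ref{dimension of Brill-Noether loci of moduli spaces} from the Quot--scheme level down to ${U}^s_C(n,d)$ along the principal bundle $\mu_s$. First I would assemble the relevant dimensions. Since $\mu_s:\mathcal{R}^s\to {U}^s_C(n,d)$ is a principal $PGL(N)$-bundle (cf. \cite[Corollary 4.3.5; p.~91]{Hu-Ln}) and $\dim {U}^s_C(n,d)=n^2(g-1)+1$ (cf. \eqref{E2}), its fibres have dimension $\dim PGL(N)=N^2-1$, so $\mathcal{R}^s$ has pure dimension $n^2(g-1)+1+(N^2-1)$. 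Because $\mu_s$ is surjective and $W^r_{n,d}=\mu_s\big(W^r_{\mathcal{R}^s,\mathcal{F}''(-m)}\big)$, the hypothesis $W^r_{n,d}\neq\emptyset$ forces $W^r_{\mathcal{R}^s,\mathcal{F}''(-m)}\neq\emptyset$; hence Lemma \ref{dimension of Brill-Noether loci of moduli spaces} applies to the pair $(\mathcal{R}^s,\mathcal{F}''(-m))$, whose fibrewise bundles have rank $n$ and degree $d$. It gives that every component of $W^r_{\mathcal{R}^s,\mathcal{F}''(-m)}$ has codimension at most $(r+1)(r+1-d+n(g-1))$ in $\mathcal{R}^s$, hence dimension at least $n^2(g-1)+1+(N^2-1)-(r+1)(r+1-d+n(g-1))$.

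Next I would descend this estimate to $W^r_{n,d}$. By Lemma \ref{GLn-invariantlemma} the subscheme $W^r_{\mathcal{R}^s,\mathcal{F}''(-m)}$ is $PGL(N)$-stable, hence saturated for $\mu_s$; therefore restricting the principal bundle $\mu_s$ to it realizes $\mu_s:W^r_{\mathcal{R}^s,\mathcal{F}''(-m)}\to W^r_{n,d}$ as a principal $PGL(N)$-bundle. Since $PGL(N)$ is connected, it fixes each irreducible component of $W^r_{\mathcal{R}^s,\mathcal{F}''(-m)}$, and each such component maps onto an irreducible component of $W^r_{n,d}$ through the restriction of this bundle, with every component of $W^r_{n,d}$ arising in this way. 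Consequently, for a component $Z$ of $W^r_{n,d}$ and a component $\widetilde Z$ of $W^r_{\mathcal{R}^s,\mathcal{F}''(-m)}$ above it, $\dim Z=\dim\widetilde Z-(N^2-1)$. Combining with the first paragraph,
$$
\dim Z\ \geq\ n^2(g-1)+1+(N^2-1)-(r+1)(r+1-d+n(g-1))-(N^2-1)\ =\ n^2(g-1)+1-(r+1)(r+1-d+n(g-1)),
$$
which is exactly the asserted bound.

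The only point that requires genuine care is the behaviour of irreducible components under $\mu_s$: one must know that the restriction of $\mu_s$ to $W^r_{\mathcal{R}^s,\mathcal{F}''(-m)}$ is again a (Zariski-)locally trivial $PGL(N)$-bundle, so that the fibre dimension $N^2-1$ is subtracted uniformly and no components are lost or merged in passing to the image. This is a formal consequence of $\mu_s$ being a principal bundle together with $W^r_{\mathcal{R}^s,\mathcal{F}''(-m)}$ being $PGL(N)$-stable and therefore saturated, but it is the step I would spell out in detail; once it is granted, the numerical conclusion is immediate.
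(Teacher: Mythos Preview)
Your argument is correct and follows exactly the route indicated in the paper: apply the determinantal codimension bound of Lemma~\ref{dimension of Brill-Noether loci of moduli spaces} on $\mathcal{R}^s$ and then descend along the principal $PGL(N)$-bundle $\mu_s:\mathcal{R}^s\to U^s_C(n,d)$. The paper merely records these two ingredients without spelling out the component-by-component descent, so your write-up is in fact more detailed than the original.
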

\begin{definition}\label{expected dimension of brill noether loci for stable locus}
We define 
$$
\rho^r_{n,d}:=n^2(g-1)+1-(r+1)(r+1-d+n(g-1))
$$
 to be the expected dimension of $W^r_{n,d}$.
\end{definition}
\begin{remark}
The above lemma is not true in the case of $\widetilde{W^r_{n,d}}$. It may have components whose dimensions are less than $\rho^r_{n,d}$. See \cite[\S 7]{BGN} for example.
\end{remark}
\begin{lemma}\label{functoriality of brill noether loci of moduli spaces of stable bundles}
Let $S$ be an algebraic scheme and $\mathcal{E}$ be a vector bundle over $C\times S$ such that for all $s\in S, \mathcal{E}_s$ is stable of rank $n$ and degree $d$. If $f:S\to {U}^s_C(n,d)$ is the induced map, then 
$$
f^{-1}(W^r_{n,d})=W^r_{S,\mathcal{E}}.
$$
\end{lemma}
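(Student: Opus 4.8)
The plan is to reduce the assertion, by flat base change along the principal bundle $\mu_s\colon\mathcal{R}^s\to {U}^s_C(n,d)$, to the functoriality statement already recorded in Lemma \ref{functoriality of Brill-Noether loci}, carried out on $\mathcal{R}^s$ where a ``universal'' family is available up to twist. Set $S':=S\times_{{U}^s_C(n,d)}\mathcal{R}^s$ and let $p\colon S'\to S$, $q\colon S'\to\mathcal{R}^s$ be the projections, so there is a Cartesian square
\[
\begin{tikzcd}
S' \arrow[r,"q"] \arrow[d,"p"'] & \mathcal{R}^s \arrow[d,"\mu_s"] \\
S \arrow[r,"f"'] & {U}^s_C(n,d).
\end{tikzcd}
\]
Since $\mu_s$ is a principal $PGL(N)$-bundle (cf. \cite[Corollary 4.3.5; p. 91]{Hu-Ln}), $p$ is also a principal $PGL(N)$-bundle, hence faithfully flat and quasi-compact; consequently a closed subscheme of $S$ is determined by its preimage in $S'$. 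So it suffices to show that $W^r_{S,\mathcal{E}}$ and $f^{-1}W^r_{n,d}$ have the same preimage in $S'$, i.e. $p^{-1}W^r_{S,\mathcal{E}}=p^{-1}f^{-1}W^r_{n,d}$.

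For the left-hand side, Lemma \ref{functoriality of Brill-Noether loci} applied to $p$ gives $p^{-1}W^r_{S,\mathcal{E}}=W^r_{S',\mathcal{E}_1}$, where $\mathcal{E}_1:=(id_C\times p)^*\mathcal{E}$ on $C\times S'$. For the right-hand side, write $\mathcal{G}:=\mathcal{F}''(-m)$ on $C\times\mathcal{R}^s$. Recall that $W^r_{n,d}$ is by definition the scheme-theoretic image under $\mu_s$ of the $PGL(N)$-invariant closed subscheme $W^r_{\mathcal{R}^s,\mathcal{G}}=W^r_{\mathcal{R}^s,\mathcal{F}''(-m)}$; since an invariant closed subscheme of the total space of a torsor is the pullback of its scheme-theoretic image, faithfully flat descent along $\mu_s$ gives $\mu_s^{-1}W^r_{n,d}=W^r_{\mathcal{R}^s,\mathcal{G}}$. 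Pulling back along $q$ and using $\mu_s\circ q=f\circ p$ together with Lemma \ref{functoriality of Brill-Noether loci} applied to $q$ yields $p^{-1}f^{-1}W^r_{n,d}=q^{-1}W^r_{\mathcal{R}^s,\mathcal{G}}=W^r_{S',\mathcal{G}_1}$, with $\mathcal{G}_1:=(id_C\times q)^*\mathcal{G}$. Thus everything comes down to proving the equality $W^r_{S',\mathcal{E}_1}=W^r_{S',\mathcal{G}_1}$ of closed subschemes of $S'$.

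This last identity is where the real content lies. By construction of $S'$, for every point $s'\in S'$ the bundles $(\mathcal{E}_1)_{s'}$ and $(\mathcal{G}_1)_{s'}$ are isomorphic, both being the stable bundle on $C$ classified by the common image of $s'$ in ${U}^s_C(n,d)$. Since stable bundles are simple, I would invoke the standard fact that two fibrewise-isomorphic families of simple bundles differ by the twist of a line bundle from the base: setting $M:=v_*\mathcal{H}om(\mathcal{E}_1,\mathcal{G}_1)$, which is invertible by cohomology-and-base-change since the fibrewise $\mathrm{Hom}$'s are one-dimensional, one checks that the evaluation map $v^*M\otimes\mathcal{E}_1\to\mathcal{G}_1$ is fibrewise an isomorphism, hence an isomorphism; here $v\colon C\times S'\to S'$. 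Finally, twisting $\mathcal{E}_1$ by $v^*M$ only tensors the morphism of vector bundles defining the relevant determinantal locus by $M$, and so leaves the determinantal subscheme unchanged; therefore $W^r_{S',\mathcal{G}_1}=W^r_{S',\mathcal{E}_1}$, which completes the argument.

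The step I expect to be the main obstacle is exactly this comparison on $C\times S'$: one must relate the abstractly given family $\mathcal{E}$ to the universal quotient on the Quot scheme after the base change, and this genuinely uses that stable bundles are simple, so that the two families differ only by a line-bundle twist from the base. Everything else is the formal functoriality of determinantal loci recorded in Lemma \ref{functoriality of Brill-Noether loci}, its evident invariance under twisting by a line bundle pulled back from the base, and faithfully flat descent along the $PGL(N)$-torsors $\mu_s$ and $p$.
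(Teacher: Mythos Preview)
Your proof is correct but takes a genuinely different route from the paper. Both arguments share the first step: $PGL(N)$-invariance of $W^r_{\mathcal{R}^s,\mathcal{F}''(-m)}$ together with the fact that $\mu_s$ is a $PGL(N)$-torsor gives $\mu_s^{-1}W^r_{n,d}=W^r_{\mathcal{R}^s,\mathcal{F}''(-m)}$. After that they diverge. The paper works \emph{locally} on $S$: since $\mathcal{F}''(-m)$ is a locally universal family, each $x\in S$ has an open neighbourhood $U_x\subset S$ and a lift $g\colon U_x\to\mathcal{R}^s$ with $(id_C\times g)^*\mathcal{F}''(-m)\cong\mathcal{E}|_{C\times U_x}$; then Lemma~\ref{functoriality of Brill-Noether loci} and $\mu_s\circ g=f|_{U_x}$ give the equality on $U_x$, hence globally. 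Because the Quot scheme is universal for \emph{quotients}, a local presentation $\mathcal{O}^N\twoheadrightarrow\mathcal{E}(m)$ yields an honest identification of families, so no twist comparison is needed. Your global fibre-product argument is cleaner structurally (pure descent along torsors, no open cover), but the price is the extra step you rightly flagged: on $S'$ the two pulled-back families are only fibrewise isomorphic, and you must use simplicity of stable bundles to show they differ by a line bundle from the base, and that such a twist leaves the determinantal locus unchanged. Both steps are standard and your sketch of them is fine.
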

\begin{proof}
First we show that the statement is true in the case when $S=\mathcal{R}^s$ and $\mathcal{E}=\mathcal{F}''(-m)$. As we saw earlier, $W^r_{\mathcal{R}^s,\mathcal{F}''(-m)}$ is a $\PGL(N)$-equivariant subscheme and since $\mathcal{R}^s\to {U}^s_C(n,d)$ is a principal $\PGL(N)$-bundle, $W^r_{\mathcal{R}^s,\mathcal{F}''(-m)}$ descends to a closed subscheme $Z$ in ${U}^s_C(n,d)$, i.e. 
$$
\mu^{-1}_s(Z)=W^r_{\mathcal{R}^s,\mathcal{F}''(-m)}.
$$
 Since $W^r_{n,d}=\mu_s(W^r_{\mathcal{R}^s,\mathcal{F}''(-m)})$, it is clear that $Z=W^r_{n,d}$. Hence 
 $$
 \mu^{-1}_s(W^r_{n,d})=W^r_{\mathcal{R},\mathcal{F}''(-m)}.
 $$
Now let $(S,\mathcal{E})$ be as in the hypothesis. Since $F''(-m)$ is a locally universal family, for any $x\in S$, there exists $U_x\subset \mathcal{R}$ which is open and a map $g:U_x\to \mathcal{R}^s$ such that 
$$
(id\times g)^*(\mathcal{F}''(-m))=\mathcal{E}|_{C\times U_x}.
$$
 By Lemma \ref{functoriality of Brill-Noether loci} we have 
 $$
 g^{-1}(W^r_{\mathcal{R},\mathcal{F}''(-m)})=W^r_{S,\mathcal{E}}\cap U_x.
 $$
  Since $\mu_s \circ g=f|_{U_x}$, we have 
  $$
  (f|_{U_x})^{-1}(W^r_{n,d})=W^r_{S,\mathcal{E}}\cap U_x.
  $$
   The lemma now follows from this.
\end{proof}

Our aim in this paper is to find a Poincar\'e type expression for the cohomology class, in the cohomology ring of the  moduli stack $\mathcal{U}_C(2,d)$ (resp. $\mathcal{SU}_C(2, \mathcal{L})$ for a line bundle $\mathcal{L}$ on $C$).

Let us now fix different notations of the Brill-Noether subvarieties in different spaces to avoid confusion.

 For a given scheme and for a given sheaf $\mathcal{E}$ over $C\times S$, we denote the Brill-Noether locus by $W^r_{S,\mathcal{E}}$ as in Definition \ref{set theoretic definition of Brill Noether loci for Moduli spaces} or in Definition \ref{scheme theoretic definition of Brill Noether loci for Moduli spaces}.  We also denote this by $W^r_{S}$ when the sheaf involved is clear from the context.  In $\mathcal{U}_C(n,d)$ the Brill-Noether locus is denoted by $\widetilde{W^r_{n,d}}$ as in (\ref{B-L in semistable loci}).  The same is denoted by $W^r_{n,d}$ in ${U}^s_C(n,d)$ as in Definition \ref{Brill-Noether loci over stable locus}.  Inside $J_d(C)$, that is inside ${U}_C(1,d)$, the Brill-Noether locus $W^r_{1,d}$ is denoted by $W^r_d$ as in Definition \ref{set theoretic definition of Brill Noether loci} or in Definition \ref{scheme theoretic definition of Brill Noether loci}.  Inside $J_d(\widetilde{C})$ the same is denoted by $W^r_{d}(\widetilde{C})$.

\section{Chow-Cohomology class of the Brill-Noether locus}\label{CohBN}

For generalities on algebraic stacks, and in particular on moduli stacks of vector bundles, we refer to \cite{Gomez}. Denote $G:=\GL(N)$.

Consider the semistable locus $\mathcal{R}$ of the Quot scheme, together with the quotient morphism \eqref{classifyingmap} to the GIT-quotient:

\begin{equation*}
\mu:\mathcal{R}\to {U}_C(n,d)=\mathcal{R}\sslash \PGL(n).
\end{equation*}

By \cite{DN}, $\mathcal{R}$ is a smooth variety. 
Recall that the Brill-Noether locus is defined as (cf. Definition \ref{Brill-Noether loci over semistable locus})
$$
\widetilde{W^r_{n,d}}= \mu (W^r_{\mathcal{R},\mathcal{F}'(-m)}).
$$

Also consider the map to the quotient stack (see \cite[Proposition 3.3,p.23]{Gomez}):
\begin{equation*}
\mu_{st}:\mathcal{R}\to \mathcal{U}_C(n,d)=[\mathcal{R}/\GL(n)].
\end{equation*}
We use the same notation for the Brill-Noether loci in the moduli stack and its coarse moduli scheme.

We first notice the following:
\begin{lemma}\label{BNequivclass}
The Brill-Noether loci give well-defined Chow-cohomology class in the equivariant Chow-cohomology of $\mathcal{R}$, and is compatible with the cycle class map on equivariant groups.
\end{lemma}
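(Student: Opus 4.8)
The plan is to exploit the machinery of equivariant Chow and cohomology groups recalled in the preceding section, applied to the smooth variety $\mathcal{R}$ with its $GL(N)$-action. First I would observe that by Lemma \ref{GLn-invariantlemma} the subscheme $W^r_{\mathcal{R},\mathcal{F}'(-m)}\subseteq \mathcal{R}$ is $GL(N)$-invariant and compatible with the $GL(N)$-action on $\mathcal{R}$. Since $\mathcal{R}$ is smooth (by \cite{DN}), Lemma \ref{equivclass} applies directly: any $G$-invariant $m$-dimensional subvariety of a smooth $G$-variety $X$ carries a $G$-equivariant fundamental class $[Y]_G\in CH^G_m(X)$. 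Strictly speaking $W^r_{\mathcal{R},\mathcal{F}'(-m)}$ need not be irreducible, so I would first decompose it into irreducible components (with their scheme-theoretic multiplicities) and define the equivariant class as the corresponding $\mathbb{Z}$-linear combination of the equivariant fundamental classes of the components, landing in $CH^G_*(\mathcal{R})$; equivalently, using the codimension indexing and Proposition \ref{stackequiv}, this gives a well-defined class in $CH^*(\mathcal{U}_C(n,d))=CH^*_{G}(\mathcal{R})$ via Theorem \ref{stackequivthm}.

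Next I would pass to cohomology. The cycle class map \eqref{cyclemap}, namely $cl^i:CH^i_G(\mathcal{R})\to H^{2i}_G(\mathcal{R},\mathbb{Z})$, is available since $\mathcal{R}$ is a smooth complex variety, and it is compatible with the usual operations on equivariant Chow groups as recalled in \S5.4. Applying $cl$ to the equivariant Chow class constructed above yields a well-defined equivariant cohomology class $[\widetilde{W^r_{n,d}}]_G\in H^*_G(\mathcal{R},\mathbb{Z})$ (or its image in rational cohomology). Under the identifications $CH^*(\mathcal{U}_C(n,d))= CH^*_G(\mathcal{R})$ and $H^*(\mathcal{U}_C(n,d))= H^*_G(\mathcal{R})$ of Theorem \ref{stackequivthm}, together with the degree-doubling cycle class map \eqref{cyclemapstack} on the stack, this is exactly the assertion that the Brill-Noether locus determines a Chow class and a cohomology class on the moduli stack $\mathcal{U}_C(n,d)$, compatibly with $cl$.

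The one genuine point requiring care — and the step I expect to be the main obstacle — is the independence of all these classes from the auxiliary choices made in the construction: the twisting integer $m$ (equivalently the bundle $\mathcal{F}'(-m)$ versus another twist), the divisor $D$ used in Definition \ref{scheme theoretic definition of Brill Noether loci for Moduli spaces}, and the representation $V$ (with open free locus $U$) used to approximate $BG$. Independence of $V$ is precisely the content of the Edidin-Graham Proposition quoted before Lemma \ref{equivclass}, provided $V\setminus U$ has sufficiently large codimension, so that is automatic. Independence of $D$ follows from the functoriality of determinantal loci already used in Lemma \ref{functoriality of Brill-Noether loci}: two admissible divisors $D_1\le D_2$ give the same determinantal subscheme of $\mathcal{R}$, so the cycle is intrinsic. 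Independence of $m$ is handled by the identification of $\widetilde{W^r_{n,d}}$ as a locus in the moduli stack cut out by the condition $h^0(C,E)\ge r+1$, which visibly does not see $m$; concretely one compares the Quot schemes for two values $m_1<m_2$ via the natural map induced by twisting, uses Lemma \ref{functoriality of Brill-Noether loci} to see the loci correspond, and invokes the compatibility of equivariant fundamental classes under the resulting $G$-equivariant morphism (Proposition \ref{equivfunct}).

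Finally, the compatibility with the cycle class map on equivariant groups — the last clause of the statement — is then immediate from the fact that we \emph{defined} the equivariant cohomology class as $cl$ applied to the equivariant Chow class, and from the general compatibility of $cl$ with pullbacks, products and proper pushforward recorded in \S5.4 and Proposition \ref{equivfunct}. I would close by remarking that the same argument applies verbatim to $\mathcal{SU}_C(n,\mathcal{L})$, replacing $\mathcal{R}$ by the fixed-determinant sublocus and $GL(N)$ by the appropriate subgroup.
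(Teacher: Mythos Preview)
Your proof follows essentially the same approach as the paper's: invoke Lemma \ref{GLn-invariantlemma} for $GL(N)$-invariance, apply Lemma \ref{equivclass} to obtain the equivariant fundamental class (handling possible non-pure-dimensionality by working in the full graded group $\oplus_i CH^i_G(\mathcal{R})$), and then pass to equivariant cohomology via the cycle class map \eqref{cyclemap}. Your additional discussion of independence from the auxiliary choices $m$, $D$, and $V$ goes beyond what the paper's own proof addresses, but it is correct and arguably makes the ``well-defined'' claim more complete.
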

\begin{proof}
Using Lemma \ref{GLn-invariantlemma} and Lemma \ref{equivclass}, we know  that $W^r_{\mathcal{R},\mathcal{F}'(-m)}$ is equivariant for the $\GL(n)$-action.
Hence, it corresponds to a $\GL(n)$-equivariant cohomology class: 
$$
[\widetilde{W^r_{n,d}}] \,\in\, \oplus_i H^{2i}_G(\mathcal{R},\mathbb{Z}).
$$
Similarly, we obtain an equivariant Chow class:
$$
[\widetilde{W^r_{n,d}}] \,\in\, \oplus_i CH^{i}_G(\mathcal{R}).
$$

Since we do not know if the Brill-Noether locus is of pure dimension, we will use the cycle class map on the equivariant  Chow ring:
$$
CH^*_G(\mathcal{R})\rightarrow \oplus_i H^{2i}_G(\mathcal{R}, \mathbb{Z}).
$$
Via the cycle class map, the equivariant Chow class
$$
[\widetilde{W^r_{n,d}}] \in CH^*_G(\mathcal{R})=\oplus_i CH^i_G(\mathcal{R})
$$
 maps to the Brill-Noether cohomology class
$$
[\widetilde{W^r_{n,d}}] \,\in\,  \oplus_i H^{2i}_G(\mathcal{R},\mathbb{Z}).
$$
\end{proof}


\subsection{Brill Noether Chow-Cohomology class on the moduli stack $\mathcal{U}_C(r,2g-2)$}

Henceforth, we write $d=2g-2$. (Note that most of the discussions remain true for other degrees as well, except for certain results in the final section).

Since ${U}_C(2,2(g-1))$ is a singular variety, we will consider Chow groups $CH^*(\mathcal{U}_C(r,d))$ of the moduli stack, instead of the Chow groups of the coarse moduli scheme $U_C(2,2g-2)$:

\begin{lemma}
The Brill Noether loci give a well-defined Chow class
$$
[\widetilde{W^r_{n,d}}] \in CH^*(\mathcal{U}_C(n,d))
$$
and a cohomology class
$$
[\widetilde{W^r_{n,d}}] \in H^*(\mathcal{U}_C(n,d),\mathbb{Z}).
$$
The classes are compatible under the cycle class map, as in \eqref{cyclemapstack}.
\end{lemma}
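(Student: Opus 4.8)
The plan is to transport the equivariant Chow-cohomology class constructed in Lemma \ref{BNequivclass} across the identifications provided by Theorem \ref{stackequivthm} and Proposition \ref{stackequiv}. First I would invoke Lemma \ref{GLn-invariantlemma}, which shows that the Brill-Noether locus $W^r_{\mathcal{R},\mathcal{F}'(-m)}\subset\mathcal{R}$ is $GL(N)$-equivariant and compatible with the $GL(N)$-action on the smooth variety $\mathcal{R}$ (smoothness of $\mathcal{R}$ being recorded via \cite{DN}). By Lemma \ref{equivclass} this closed subscheme carries a well-defined $G$-equivariant fundamental class $[\widetilde{W^r_{n,d}}]\in\bigoplus_i CH^i_G(\mathcal{R})$, and via the equivariant cycle class map of \eqref{cyclemap} a class $[\widetilde{W^r_{n,d}}]\in\bigoplus_i H^{2i}_G(\mathcal{R},\mathbb{Z})$, compatibly — this is exactly the content already assembled in Lemma \ref{BNequivclass}.

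Next I would apply Theorem \ref{stackequivthm}, which gives ring isomorphisms $H^*(\mathcal{U}_C(n,d))\cong H^*_G(\mathcal{R})$ and $CH^*(\mathcal{U}_C(n,d))\cong CH^*_G(\mathcal{R})$ for the quotient stack $\mathcal{U}_C(n,d)=[\mathcal{R}/G]$ (here one uses that $\mu_{st}:\mathcal{R}\to\mathcal{U}_C(n,d)$ presents the moduli stack as such a quotient, via \cite[Proposition 3.3]{Gomez}). Transporting the equivariant classes along these isomorphisms produces the asserted
$$
[\widetilde{W^r_{n,d}}]\in CH^*(\mathcal{U}_C(n,d)),\qquad [\widetilde{W^r_{n,d}}]\in H^*(\mathcal{U}_C(n,d),\mathbb{Z}).
$$
Compatibility with the cycle class map \eqref{cyclemapstack} follows because, under the identifications of Theorem \ref{stackequivthm} and Proposition \ref{stackequiv}, the stack-level cycle class map \eqref{cyclemapstack} is by construction the equivariant cycle class map \eqref{cyclemap}; hence compatibility on the equivariant side (already noted in Lemma \ref{BNequivclass}) transfers verbatim.

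I do not expect a serious obstacle here, since all the genuinely substantive work — the $G$-equivariance of the locus, its fundamental class, and the comparison of equivariant and stack-level theories — has been done in the preceding lemmas and cited propositions. The one point needing a word of care is that $\widetilde{W^r_{n,d}}$ need not be of pure dimension, so one must stay on the Chow side $CH^*_G(\mathcal{R})$ (where $[\widetilde{W^r_{n,d}}]$ lives as a sum of classes in various codimensions as in the last Proposition of \S\ref{equivchow}) and only then push to cohomology via the cycle class map, rather than attempting to write down a cohomology class directly; this is precisely the route taken in Lemma \ref{BNequivclass}, and I would mirror it.
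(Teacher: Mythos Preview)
Your proposal is correct and follows essentially the same route as the paper: the paper's proof is a one-sentence appeal to the smoothness of $\mathcal{R}$, the identification in Proposition \ref{stackequiv}, and Lemma \ref{BNequivclass}, which is exactly what you unpack in more detail (with Theorem \ref{stackequivthm} serving the same role). Your added remark about the potential failure of pure dimension is also consonant with the paper's treatment in Lemma \ref{BNequivclass}.
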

\begin{proof}
Since $\mathcal{R}$ is a smooth variety, the lemma follows from the identification in Proposition \ref{stackequiv}, and above Lemma \ref{BNequivclass}.
\end{proof}

In particular, we have the following lemma.

\begin{lemma}\label{qchowcoh}
The Brill-Noether class $[W^r_{\mathcal{R}}]$ is non-zero if and only if $[\widetilde{W^r_{n,d}}]$ is non-zero, in Chow-cohomology (resp. in  cohomology ring).
\end{lemma}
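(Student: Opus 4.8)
The plan is to prove this by tracking the Brill-Noether class through the two identifications already established in the excerpt: the isomorphism $CH^*(\mathcal{U}_C(n,d)) \cong CH^*_G(\mathcal{R})$ from Theorem \ref{stackequivthm} (together with Proposition \ref{stackequiv}), and the construction of the equivariant class $[\widetilde{W^r_{n,d}}] \in CH^*_G(\mathcal{R})$ from the subvariety $W^r_{\mathcal{R},\mathcal{F}'(-m)} \subset \mathcal{R}$ via Lemma \ref{equivclass} and Lemma \ref{BNequivclass}. The point is that both of these are \emph{isomorphisms} of rings, not merely maps, so non-vanishing is preserved in both directions.

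First I would recall, from the construction in Section \ref{equivchow}, that the equivariant class $[W^r_{\mathcal{R},\mathcal{F}'(-m)}]_G \in CH^G_m(\mathcal{R})$ is by definition represented, for a suitable $l$-dimensional representation $V$ of $G$ with good open locus $U$, by the fundamental class of the quotient $(W^r_{\mathcal{R},\mathcal{F}'(-m)} \times U)/G$ inside $\mathcal{R}_G = (\mathcal{R}\times U)/G$. Since $G$ acts freely on $\mathcal{R}\times U$, the quotient map $\mathcal{R}\times U \to \mathcal{R}_G$ is flat (indeed a principal bundle), and flat pullback of cycles is injective on the level of fundamental classes of subvarieties: $[W^r_{\mathcal{R},\mathcal{F}'(-m)} \times U]$ is non-zero in $CH_*(\mathcal{R}\times U)$ precisely when $[(W^r_{\mathcal{R},\mathcal{F}'(-m)} \times U)/G]$ is non-zero in $CH_*(\mathcal{R}_G)$, i.e. when $[\widetilde{W^r_{n,d}}] = [W^r_{\mathcal{R},\mathcal{F}'(-m)}]_G$ is non-zero in $CH^*_G(\mathcal{R})$. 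Then, using the product structure $CH_*(\mathcal{R}\times U) \cong CH_*(\mathcal{R})\otimes CH_*(U)$ together with $CH_*(U) = \mathbb{Z}$ in top degree (as $U$ is open in affine space), one sees $[W^r_{\mathcal{R},\mathcal{F}'(-m)} \times U]$ is non-zero iff $[W^r_{\mathcal{R}}] := [W^r_{\mathcal{R},\mathcal{F}'(-m)}]$ is non-zero in $CH^*(\mathcal{R})$; alternatively, one invokes the map $CH^i_G(\mathcal{R}) \to CH^i(\mathcal{R})$ of \eqref{liftchow} and the fact that it sends $[\widetilde{W^r_{n,d}}]$ to $[W^r_{\mathcal{R}}]$, with the converse direction following from the explicit description of the equivariant class. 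Finally, applying the ring isomorphism $CH^*(\mathcal{U}_C(n,d)) \cong CH^*_G(\mathcal{R})$ (Proposition \ref{stackequiv}, which matches the stack class with the equivariant class) gives the equivalence of non-vanishing of $[\widetilde{W^r_{n,d}}]$ in $CH^*(\mathcal{U}_C(n,d))$ with that of $[W^r_{\mathcal{R}}]$. The same argument, verbatim, runs with Chow groups replaced by cohomology, using Theorem \ref{stackequivthm}(1) and the fact that $\mathcal{R}\times U \to \mathcal{R}_G$ induces an injection on the relevant cohomology in the stable range (the bundle has affine fibers, so the pullback is an isomorphism in a range of degrees growing with $\dim U$), and the compatibility with the cycle class map \eqref{cyclemapstack}.

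The main obstacle I anticipate is the bookkeeping around the \emph{range} of degrees: the identification $CH^i_G(\mathcal{R}) = CH^i(\mathcal{R}_G)$ and its cohomological analogue only hold once $V - U$ has codimension large compared to $i$, so one must be careful that the representation $V$ is chosen uniformly large enough to see the class $[\widetilde{W^r_{n,d}}]$ in whatever codimension it lives, and that the flat-pullback injectivity argument is applied at that same level. This is routine given the setup in Section \ref{equivchow} (the independence-of-$V$ propositions are exactly designed for this), but it is the one place where care is needed. A secondary, purely expository point is to make explicit that the ``Brill-Noether class $[W^r_{\mathcal{R}}]$'' in the statement means the ordinary (non-equivariant) class of $W^r_{\mathcal{R},\mathcal{F}'(-m)}$ in $CH^*(\mathcal{R})$ or $H^*(\mathcal{R},\mathbb{Z})$, and that it is the image of the equivariant/stack class under \eqref{liftchow} or \eqref{liftcoh}; with that identification fixed, the equivalence is a formal consequence of the isomorphisms cited above and no further geometry is required.
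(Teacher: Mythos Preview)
Your proposal is correct and follows essentially the same approach as the paper: both rely on the identification $CH^*(\mathcal{U}_C(n,d)) = CH^*_G(\mathcal{R})$ from Theorem~\ref{stackequivthm} together with the map \eqref{liftchow} (resp.\ \eqref{liftcoh}) sending the equivariant class $[\widetilde{W^r_{n,d}}]$ to the ordinary class $[W^r_{\mathcal{R}}]$. The paper's proof is very terse (essentially just citing these identifications and declaring the lemma clear), whereas you supply the details---the flat-pullback argument for the converse direction and the bookkeeping on the codimension of $V\setminus U$---that a careful reader would want to see.
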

\begin{proof}
Using \eqref{liftchow} and Theorem \ref{stackequivthm}   (similarly for cohomology: use \eqref{liftcoh}), we have:
$$
CH^*(\mathcal{U}_C(r,d))=CH^*_G( \mathcal{R}) \rightarrow CH^*(\mathcal{R})
$$
and compatible with the cycle class maps:
\begin{equation*}
\xymatrix{CH^*(\mathcal{U}_C(n,d))\ar[d]_{cl}\ar[rr]^{\mu^{\ast}_{st}} && CH^*(\mathcal{R})\ar[d]^{cl} \\
\oplus_i H^{2i}(\mathcal{U}_C(n,d),\mathbb{Z})\ar[rr]_{~~~\mu^{\ast}_{coh}}&& \oplus_i H^{2i}(\mathcal{R},\mathbb{Z}) .
}
\end{equation*} 
We only need to note that the $\GL(N)$-equivariant subvariety $W^r_{\mathcal{R}}$ is represented by the class $[\widetilde{W^r_{n,d}}]$ in the equivariant cohomology of $\mathcal{R}$.

Hence the lemma is clear.

\end{proof} 

\begin{lemma}\label{nonempty}
The Brill-Noether loci  in $\mathcal{R}$ is non-empty (consequently in $U_C(n,d)$) if and only if its cohomology class is non-zero, in $H^*(\mathcal{R},\mathbb{Q})$ or in $H^*(\mathcal{U}_C(n,d),\mathbb{Q})$.
\end{lemma}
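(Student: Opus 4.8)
The plan is to establish the two implications separately. One of them is formal: the empty subscheme has zero fundamental class, so if the Brill--Noether class is non-zero --- in $H^\ast(\mathcal R,\mathbb Q)$, hence by Lemma~\ref{qchowcoh} in $H^\ast(\mathcal U_C(n,d),\mathbb Q)$ --- then $W^r_{\mathcal R}=W^r_{\mathcal R,\mathcal F'(-m)}$ and its image $\widetilde{W^r_{n,d}}=\mu(W^r_{\mathcal R})$ are non-empty. The substance is the converse: assuming $\widetilde{W^r_{n,d}}\neq\emptyset$, produce a non-zero cohomology class.

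The first step is to pass to the \emph{projective} coarse moduli space. Since $\mu$ is a good quotient, $W^r_{\mathcal R}=\mu^{-1}(\widetilde{W^r_{n,d}})$ and $\widetilde{W^r_{n,d}}$ is a non-empty closed subvariety of the projective variety $U_C(n,d)$; a non-empty closed subvariety of a projective variety always has non-zero fundamental class in rational Borel--Moore homology. Indeed, if $k=\dim\widetilde{W^r_{n,d}}$ and $h$ is the class of an ample line bundle, then the degree-$2k$ part of $[\widetilde{W^r_{n,d}}]$, namely $\sum_i m_i[Z_i]$ over the $k$-dimensional components with multiplicities $m_i\geq1$, satisfies $\deg\bigl(h^{k}\cap\sum_i m_i[Z_i]\bigr)=\sum_i m_i\deg_h Z_i>0$ by ampleness. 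Hence $[\widetilde{W^r_{n,d}}]\neq0$ in $H^{\mathrm{BM}}_\ast(U_C(n,d),\mathbb Q)$.

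The remaining (and decisive) step is to carry this non-vanishing onto the smooth Artin stack $\mathcal U_C(n,d)$, i.e.\ onto $\mathcal R$. I would restrict to the open substack $\mathcal U^s_C(n,d)$ of stable bundles: its coarse space $U^s_C(n,d)$ is \emph{smooth} quasi-projective, the coarse map is a $\mathbb G_m$-gerbe (so rationally $H^\ast(\mathcal U^s_C(n,d),\mathbb Q)\cong H^\ast(U^s_C(n,d),\mathbb Q)[t]$ with $|t|=2$ and pullback to the gerbe injective), and the open restriction $H^\ast(\mathcal U_C(n,d),\mathbb Q)\to H^\ast(\mathcal U^s_C(n,d),\mathbb Q)$ carries $[\widetilde{W^r_{n,d}}]$ to the Brill--Noether class of $\mathcal U^s_C(n,d)$, which is the pullback of $[W^r_{n,d}]$ on $U^s_C(n,d)$ (here one also uses that $\mu_s\colon\mathcal R^s\to U^s_C(n,d)$ is a principal $PGL(N)$-bundle, \cite[Corollary 4.3.5; p.~91]{Hu-Ln}). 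Now the strictly semistable locus $U_C(n,d)\setminus U^s_C(n,d)$ has dimension $2g<\dim U_C(n,d)$, so any component of $\widetilde{W^r_{n,d}}$ of dimension $>2g$ meets the stable locus, and the localization sequence shows $H^{\mathrm{BM}}_{2k}(U_C(n,d),\mathbb Q)\to H^{\mathrm{BM}}_{2k}(U^s_C(n,d),\mathbb Q)$ is injective for $k>2g$; thus, for $C$ general and in the range of $(r,d)$ relevant to Theorem~\ref{mainThm} (where $\widetilde{W^r_{n,d}}$ has such a component), $[\widetilde{W^r_{n,d}}]$ restricts to a non-zero Borel--Moore class on $U^s_C(n,d)$, hence, by Poincar\'e duality on the smooth $U^s_C(n,d)$, to a non-zero class in $H^\ast(U^s_C(n,d),\mathbb Q)$, hence non-zero in $H^\ast(\mathcal U^s_C(n,d),\mathbb Q)$ and a fortiori in $H^\ast(\mathcal U_C(n,d),\mathbb Q)$; Lemma~\ref{qchowcoh} then gives $[W^r_{\mathcal R}]\neq0$ in $H^\ast(\mathcal R,\mathbb Q)$. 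The residual case --- $\dim\widetilde{W^r_{n,d}}\leq 2g$, when the Brill--Noether locus may sit inside the strictly semistable locus --- is disposed of directly, using the description of that locus as a finite quotient of $J_{g-1}(C)\times J_{g-1}(C)$ and the classical Poincar\'e formula on the Jacobian factors, or else by transporting non-vanishing along the dominant generically finite map $\pi_\ast\colon J^{ss}\dashrightarrow U_C(n,d)$ from the Jacobian of the spectral curve, on which Brill--Noether loci are closed subvarieties of an abelian variety and so automatically carry non-zero classes.

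The hard part is precisely this transfer of non-vanishing from the projective coarse space $U_C(n,d)$ to the smooth but \emph{non-proper} $\mathcal R$ (equivalently, the Artin stack $\mathcal U_C(n,d)$): on a non-proper variety the fundamental class of a non-empty closed subset can vanish, so one cannot simply invoke ``projectivity'' at the level of $\mathcal R$. Making the argument rigorous forces one to exploit either the large codimension of the non-stable locus (so that the cycle class survives open restriction to the smooth $U^s_C(n,d)$) or the spectral-curve/abelian-variety comparison, together with the (harmless, because rational) gerbe-theoretic passage between $U^s_C(n,d)$, $\mathcal R^s$ and the stack $\mathcal U_C(n,d)$.
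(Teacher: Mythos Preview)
The paper's proof of this lemma consists of a single word: ``Clear.'' You have written several paragraphs of genuine mathematics to justify the non-trivial direction (non-empty locus $\Rightarrow$ non-zero class), correctly observing that on a non-proper variety such as $\mathcal R$ a non-empty closed subvariety can in principle have vanishing fundamental class, and then routing the argument through the projective coarse space, the smooth stable locus, a localization/dimension count across the strictly semistable locus, and the $\mathbb G_m$-gerbe comparison. None of this appears in the paper; the authors evidently regard the statement as self-evident (and the slightly unusual phrase ``cohomology class is non-empty'' may even be intended more loosely than ``non-zero'').

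So your approach is not wrong --- it is far more careful than the paper's --- but it is not what the paper does. If your goal is to match the paper, you should simply record that the empty scheme has zero class and conversely, and move on. If your goal is a rigorous proof of the statement as you have (reasonably) interpreted it, then your outline is sound in the main case; the residual case ($\dim\widetilde{W^r_{n,d}}\le 2g$) is only sketched, and the claim that restriction $H^\ast(\mathcal U_C(n,d),\mathbb Q)\to H^\ast(\mathcal U^s_C(n,d),\mathbb Q)$ detects the class would benefit from being stated as a separate lemma, since it is doing real work that the paper nowhere supplies.
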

\begin{proof}
Clear.
\end{proof}

\section{Main theorems, when the rank is two}

We want to give some relations amongst the Brill-Noether loci in $\mathcal{U}_C(2,d)$.  In our context we fix degree $d$ to be $2(g-1)$. Denote $G:=\GL(N)$.

  In this case, Sundaram (cf. \cite{Su}) proved that $\widetilde{W^0_{2,2(g-1)}}$ is a divisor in ${U}_C(2,2(g-1))$.   We give some relations between the cohomology classes of the Brill-Noether loci in terms of cohomology class of $\widetilde{W^0_{2,2(g-1)}}$ in the moduli stack $\mathcal{U}_C(2,2g-2)$.
Since the moduli spaces ${U}_C(2,2(g-1))$ and ${SU}_C(2, \mathcal{L})$ are singular varieties,  Chow rings and cohomology rings of  the moduli stacks seem appropriate to consider. 

Consider the map $\pi_{\ast}:J^{ss}\subseteq J_{4(g-1)}(\widetilde{C}) \rightarrow {U}_{C}(2,2(g-1))$ as in (\ref{E39}).  Note that as we have taken $d=2(g-1)$, therefore it follows from (\ref{E3}) that $\delta=4(g-1)$.  Also from (\ref{E2}), we have 
\begin{equation*}
\delta=4(g-1)=\{4(g-1)+1 \}-1=\widetilde{g}-1.
\end{equation*}  
Hence we have the Theta divisor $\Theta:=W^0_{4(g-1)}(\widetilde{C})$ in $J_{4(g-1)}(\widetilde{C})$.  Following theorem says  that the Theta divisor of $\widetilde{C}$ intersects both $J^{ss}$ and its complement in $J_{4(g-1)}(\widetilde{C})$.

\begin{theorem}\label{T41}
	The Theta divisor of $J_{4(g-1)}(\widetilde{C})$, denoted by $\Theta$, does not lie inside the complement of $J^{ss}$ in $J_{4(g-1)}(\widetilde{C})$.  More precisely,
	\begin{enumerate}
		\item For any point $l\in J_{4(g-1)}(\widetilde{C})- \Theta$, $\pi_{\ast}(l)$ is semistable.
		\item There is a point $\xi \in \Theta$ such that $\pi_{\ast}(\xi)$ is semistable. 
	\end{enumerate}
\end{theorem}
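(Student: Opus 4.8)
The plan is to run both parts through the adjunction $\mathrm{Hom}_C(M,\pi_{*}l)\cong \mathrm{Hom}_{\widetilde C}(\pi^{*}M,l)$ for the finite double cover $\pi\colon\widetilde C\to C$, combined with the splitting $\pi_{*}\mathcal O_{\widetilde C}\cong\mathcal O_C\oplus L^{-1}$ (where $\deg L=2g-2$), which yields $\pi_{*}\pi^{*}M\cong M\oplus(M\otimes L^{-1})$ and hence $h^{0}(\widetilde C,\pi^{*}M)=h^{0}(C,M)+h^{0}(C,M\otimes L^{-1})$. I also use that $\deg l=\delta=\widetilde g-1$, so that $\Theta$ is exactly the locus $\{\,l:h^{0}(\widetilde C,l)\geq 1\,\}$; with this, part (1) says $J_{\widetilde g-1}(\widetilde C)\smallsetminus\Theta\subseteq J^{ss}$ and part (2) says $\Theta\cap J^{ss}\neq\emptyset$.

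For part (1) I would argue by contraposition: suppose $\pi_{*}l$ is not semistable. Since $\pi_{*}l$ has rank $2$ and slope $g-1$, its maximal destabilizing subsheaf is a line subbundle $M\subseteq\pi_{*}l$ with $\deg M\geq g$. Under the adjunction this inclusion corresponds to a nonzero map $\pi^{*}M\to l$, which is injective because a nonzero morphism of line bundles on the integral curve $\widetilde C$ is injective; thus $\pi^{*}M\hookrightarrow l$ and $H^{0}(\widetilde C,\pi^{*}M)\hookrightarrow H^{0}(\widetilde C,l)$. By Riemann--Roch on $C$ we have $h^{0}(C,M)\geq\deg M+1-g\geq 1$, so $h^{0}(\widetilde C,l)\geq h^{0}(\widetilde C,\pi^{*}M)\geq h^{0}(C,M)\geq 1$, i.e. $l\in\Theta$. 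Equivalently, $l\notin\Theta$ forces $\pi_{*}l$ to be semistable.

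For part (2) I would run the same analysis in reverse to control the locus $Z\subseteq\Theta$ of those $\xi$ for which $\pi_{*}\xi$ is not semistable. Such a $\xi$ admits an embedding $\pi^{*}M\hookrightarrow\xi$ with $\deg M\geq g$, hence $\xi\cong\pi^{*}M\otimes\mathcal O_{\widetilde C}(D')$ for an effective divisor $D'$ on $\widetilde C$ of degree $\widetilde g-1-2\deg M$, which forces $g\leq\deg M\leq 2g-2$. Therefore $Z$ is contained in the union, over integers $m$ with $g\leq m\leq 2g-2$, of the images of the morphisms
\[
J_{m}(C)\times S^{\widetilde g-1-2m}(\widetilde C)\longrightarrow J_{\widetilde g-1}(\widetilde C),\qquad (M,D')\longmapsto \pi^{*}M\otimes\mathcal O_{\widetilde C}(D').
\]
Each such image has dimension at most $g+(\widetilde g-1-2m)=5g-4-2m\leq 3g-4$, which is strictly smaller than $\dim\Theta=\widetilde g-1=4g-4$. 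Since $\Theta$ is irreducible of dimension $4g-4$, it follows that $Z\subsetneq\Theta$, and any $\xi\in\Theta\smallsetminus Z$ is a point of $\Theta$ with $\pi_{*}\xi$ semistable. The only place that really needs care is part (2): one must verify that the structural description of $Z$ is exhaustive (this is exactly the adjunction together with the slope bound $\deg M\geq g$) and that the auxiliary parameter spaces are genuinely of smaller dimension, which reduces to the elementary inequality $3g-4<4g-4$; part (1) is then essentially formal.
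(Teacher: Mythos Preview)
Your argument is correct. The paper does not give its own proof of this statement; it simply refers to \cite[Proposition~5.1]{BNR}, so there is no detailed line-by-line comparison to make. Your approach---the adjunction $\mathrm{Hom}_C(M,\pi_*l)\cong\mathrm{Hom}_{\widetilde C}(\pi^*M,l)$ to trade a destabilizing line subbundle for an injection $\pi^*M\hookrightarrow l$, followed by Riemann--Roch for part~(1) and a dimension count of the non-semistable locus against $\dim\Theta$ for part~(2)---is exactly the standard argument and is essentially what the cited reference does (specialised here to rank~$2$). The two small points worth making explicit are that in part~(2) you are implicitly using part~(1) when you write $Z\subseteq\Theta$, and that the irreducibility of $\Theta$ (which you invoke) holds because $\Theta$ is the image of the birational Abel--Jacobi map $S^{\widetilde g-1}(\widetilde C)\to J_{\widetilde g-1}(\widetilde C)$; both are fine, but flagging them makes the write-up self-contained.
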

\begin{proof}
	See \cite[Proposition 5.1; p. 176]{BNR}.
\end{proof}
Moreover we have that pullback of the divisor $\widetilde{W^0_{2,2(g-1)}}$ of ${U}_C(2,2(g-1))$ is the restriction of $\Theta$ to $J^{ss}$.

\begin{theorem}\label{T42}
	Let us denote the restriction of $\Theta$ to $J^{ss}$ by $\Theta \mid_{J^{ss}}$.  Then
	\begin{equation*}
	\pi_{\ast}^{-1}(\widetilde{W^0_{2,2(g-1)}})= \Theta \mid_{J^{ss}}.
	\end{equation*}
\end{theorem}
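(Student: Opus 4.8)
The plan is to show the two sets $\pi_*^{-1}(\widetilde{W^0_{2,2(g-1)}})$ and $\Theta\mid_{J^{ss}}$ coincide as schemes; I will first match them set-theoretically and then promote this to a scheme-theoretic identity. For a point $l\in J^{ss}$, write $E=\pi_*l$. By the Leray identity \eqref{LSS} we have $H^0(\widetilde C,l)=H^0(C,\pi_*l)=H^0(C,E)$, so $h^0(E)\geq 1$ if and only if $h^0(l)\geq 1$. By definition $l\in\Theta=W^0_{4(g-1)}(\widetilde C)$ exactly when $h^0(\widetilde C,l)\geq 1$, since $\deg l=\widetilde g-1$; and by the description \eqref{B-L in semistable loci} of the Brill–Noether locus, together with the fact that every semistable $E$ in the S-equivalence class $\mu(l)$ has the same $h^0$ when $d=2g-2$ (here one should invoke that for $l\in J^{ss}$ the bundle $\pi_*l$ is semistable, and Sundaram's result that $\widetilde{W^0_{2,2(g-1)}}$ is a genuine divisor whose points are characterised by $h^0\geq 1$), we get $\pi_*l\in\widetilde{W^0_{2,2(g-1)}}$ iff $h^0(C,\pi_*l)\geq 1$. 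Chaining these equivalences gives $l\in\pi_*^{-1}(\widetilde{W^0_{2,2(g-1)}})$ iff $l\in\Theta\mid_{J^{ss}}$ as sets.

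Next I would upgrade to scheme structures. Both sides are the first determinantal locus of a morphism of vector bundles: on the $\Theta$ side, the classical construction realising $W^0_{4(g-1)}(\widetilde C)$ as the degeneracy locus of $\gamma$ from Definition \ref{scheme theoretic definition of Brill Noether loci}; on the $\widetilde{W^0}$ side, Definition \ref{scheme theoretic definition of Brill Noether loci for Moduli spaces} applied to $\mathcal F'(-m)$, pulled back along $\pi_*$. The key compatibility is Lemma \ref{Brill Noether loci of pushforward of a bundle} (equality $W^r_{S,\mathcal E}=W^r_{S,\mathcal E'}$ for $\mathcal E'=(\pi\times\mathrm{id})_*\mathcal E$) together with Lemma \ref{functoriality of Brill-Noether loci} (base change of determinantal loci): applying $v_*$ to the pushforward of the sequence $0\to\mathcal E\to\mathcal E(\pi^*D)\to\cdots\to 0$ on $\widetilde C\times J^{ss}$, and using the projection formula as in the proof of Lemma \ref{Brill Noether loci of pushforward of a bundle}, identifies the determinantal locus cut out on $J^{ss}$ by the family $\pi_*l$ with the one cut out by $l$ itself — which is precisely $W^0_{4(g-1)}(\widetilde C)\cap J^{ss}=\Theta\mid_{J^{ss}}$. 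Combined with Lemma \ref{functoriality of brill noether loci of moduli spaces of stable bundles} (so that $\pi_*^{-1}\widetilde{W^0_{2,2(g-1)}}$ is itself a $W^0$ of a family on $J^{ss}$, via the classifying map $J^{ss}\to U_C(2,2g-2)$ factoring through $\mu$), this yields the scheme-theoretic equality.

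The main obstacle is the bookkeeping at the boundary between the stable and strictly semistable loci: Lemma \ref{functoriality of brill noether loci of moduli spaces of stable bundles} is stated for the stable moduli space ${U}^s_C(n,d)$, but $\pi_*$ lands in all of ${U}_C(2,2g-2)$, and over the strictly semistable points the good quotient $\mu$ is not a $PGL(N)$-bundle, so one cannot directly pull back $\widetilde{W^0_{2,2(g-1)}}$ by descent. I would get around this either by working on the level of the Quot scheme — lifting $l\in J^{ss}$ to a point of $\mathcal R$ presenting $\pi_*l$, comparing $W^0_{\mathcal R,\mathcal F'(-m)}$ with the pullback of $\Theta$ there, and then pushing down by $\mu$ — or by noting that for the divisor case ($r=0$, $d=2g-2$) the locus $\widetilde{W^0_{2,2(g-1)}}$ has the expected codimension $1$ everywhere (Sundaram), so its scheme structure is reduced Cartier and is detected by the single condition $h^0\geq 1$, which the Leray argument handles uniformly on all of $J^{ss}$. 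A secondary technical point to check is that $\deg(\pi_*l)=2(g-1)$ matches $\delta=4(g-1)$ under \eqref{E3}, which is exactly the numerology already recorded just before Theorem \ref{T41}; this is what makes $\Theta$ (rather than some translate) the right divisor on the $\widetilde C$ side.
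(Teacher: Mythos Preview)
Your core approach---the Leray identity $H^0(\widetilde C,l)=H^0(C,\pi_*l)$---is exactly the paper's: its entire proof is a citation to \cite[Lemma~6]{BT} together with the one-line remark that the statement ``follows directly from the fact that $H^0(\widetilde{C},l)=H^0(C,\pi_{\ast}l)$.'' All of your scheme-theoretic bookkeeping via Lemmas~\ref{functoriality of Brill-Noether loci} and~\ref{Brill Noether loci of pushforward of a bundle}, and your careful discussion of the strictly semistable boundary, go well beyond what the paper itself supplies.
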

\begin{proof}
	See \cite[Lemma 6; p. 335]{BT}.  Also follows directly from the fact that $H^0(\widetilde{C},l)=H^0(C,\pi_{\ast}l)$.
\end{proof}

Now we want to check whether Theorem \ref{T41} and \ref{T42} hold for other Brill-Noether subvarieties of higher codimension.  By (\ref{LSS}), we have
$$
\pi_{\ast}^{-1}(\widetilde{W^r_{2,2(g-1)}})= W^r_{4(g-1)}\mid_{J^{ss}}.
$$

For our purpose,  we construct a scheme $S_0$ (cf. before Lemma \ref{S0}) to give relations between the cohomology classes Brill-Noether loci, on $\mathcal{U}_C(2,2(g-1))$.\\
\if
Let $\zeta \in J_{4(g-1)}(\widetilde{C})- J^{ss}$. Construct $S$ as blow up of $J_{4(g-1)}(\widetilde{C})$ at the point $\zeta$.  Then consider the following diagram.
\begin{equation*}
\label{E200}
\xymatrix{ & S\ar[ld]_{\pi_{\delta}} 
	\ar[dr]^{\pi_{{U}_C}}\\
	J_{\delta}(\widetilde{C}) \ar@{-->}[rr]_{\pi_{\ast}}&& {U}_C(2,d)}
\end{equation*}
where $\pi_{\delta}: S\rightarrow J_{\delta}(\widetilde{C})$ is a birational morphism.  Therefore by Theorem \ref{T2}, the restriction of the map $\pi_{\mathcal{U}_C}:S \rightarrow \mathcal{U}_C(2,d)$ to an open subset of $S$ is finite.
This diagram is commutative on the locus where the map $\pi_{\delta}$ and $\pi_{\ast}$ are defined.

We can construct a scheme $S$ with the following properties.
\begin{enumerate}
	\item $S$ is a smooth projective variety.
	 \item There is a morphism $q:S \rightarrow \mathcal{Q}$ such that  $\psi=\mu\circ q$, wherever $\mu$ is defined.
	\item There exists a birational morphism $\phi:S \rightarrow J_{4(g-1)}(\widetilde{C})$ and a generically finite morphism $\psi:S\rightarrow {U}_C(2,2(g-1))$.
	\item $\phi:\phi^{-1}(J^{ss})\rightarrow J^{ss}$ is an isomorphism.
	\item The following diagram is commutative.
		\begin{equation}
	\label{E199}
	\xymatrix{ & S \ar[ld]_{\phi} 
		\ar[dr]^{\psi}\\
		J_{4(g-1)}(\widetilde{C}) \ar@{-->}[rr]_{\pi_{\ast}}&& {U}_C(2,2(g-1))}
	\end{equation}
	Moreover this diagram is commutative whenever the domains of the involved rational maps are chosen properly.  In particular, we have 
	the following commutative diagram.
	\begin{equation*}
	\label{E200}
	\xymatrix{ &\phi^{-1}(J^{ss}) \ar[ld]_{\phi}^{\cong} 
		\ar[dr]^{\psi}\\
		J^{ss} \ar[rr]_{\pi_{\ast}}&& {U}_C(2,2(g-1))}
	\end{equation*}
\end{enumerate}

To construct $S$ as above, we use the  Poincar\'e bundle  $\mathcal{P}$ over $\widetilde{C}\times J_{4(g-1)}(\widetilde{C})$, and the morphism on the semistable points $J^{ss}\subset J_{4(g-1)}(\widetilde{C})$:
 $$
 J^{ss}\to \mathcal{Q}
 $$
  is induced by the family $((\pi\times id)_*\mathcal{P})|_{C\times J^{ss}}$.
Resolving this map and its singularities corresponds to a smooth projective variety $S$, with above properties.

Then we have the following diagram.

\begin{equation*}
\label{E201}
\xymatrix{ \tilde{C}\times S \ar[d]_{id \times \phi}\ar[rr]^{\pi \times id}&& C\times S \ar[d]_{id\times \psi}\ar[rr]&& S \ar[d]^{\psi}\\
	\widetilde{C}\times J_{4(g-1)}(\widetilde{C})\ar@{-->}[rr]_{\pi \times \pi_{\ast}}&& C \times {U}_C(2,2(g-1))\ar[rr]	&& {U}_C(2,2(g-1))}
\end{equation*}

Let us denote by $J^s$ the following set;
\begin{equation*}
J^{s}:=\{l\in J_{4(g-1)}(\widetilde{C})\mid \pi_{\ast}l\in {U}^s_{C}(2,2(g-1))\}.
\end{equation*}
Define $S_0:=\phi^{-1}(J^s)$. Then we have the following lemma:
\fi

Recall that we have the Poincar\'e bundle  $\mathcal{P}$ on $\widetilde{C}\times J_{4(g-1)}(\widetilde{C})$. Let $$\widetilde{v}_1:\widetilde{C}\times J_{4(g-1)}(\widetilde{C}) \to \widetilde{C},\,,\widetilde{v}_2:\widetilde{C}\times J_{4(g-1)}(\widetilde{C}) \to J_{4(g-1)}(\widetilde{C})$$ be the projections. Recall from Section \ref{sec:Brill-Noether loci on U} that we have fixed $m \gg 0$ such that any semistable vector bundle $E$ over $C$ of rank $n$ and degree $d$ is $m$-regular. We also now choose $m\gg 0$ so that $H^1(\widetilde{C}, L\otimes \pi^*\mathcal{O}(m))=0$ for any $L\in J_{4(g-1)}(\widetilde{C})$. Now consider 
the sheaf $$\mathcal{V}:=[\widetilde{v}_{2*}(\mathcal{P}\otimes \widetilde{v}^*_{1}\pi^*\mathcal{O}(m))]^{\vee}\,.$$ 
From cohomology and base change theorem, it follows that $\mathcal{V}$ is in fact a vector bundle. Now
consider the projective bundle 
$$\mathbb{P}:=\mathbb{P}(\mathcal{V}^{\oplus N})\;,$$ where $N$ is as in \eqref{equation for N} for $(n,d)=(2,2g-2)$, i.e, $N=2m$.  The scheme $\mathbb{P}$ parametrizes line bundles $L$ of degree $4(g-1)$ on $\widetilde{C}$ along with $N$ sections of $L\otimes \pi^*\mathcal{O}(m)$ upto scalars. This can be seen as follows: Let $$p_{\mathbb{P}}:\mathbb{P}\to J_{4(g-1)}(\widetilde{C})\,, \widetilde{v}_{1,\mathbb{P}}:\widetilde{C}\times \mathbb{P}\to \widetilde{C}\,, \widetilde{v}_{2,\mathbb{P}}:\widetilde{C}\times \mathbb{P}\to \mathbb{P}$$
be the projections. Over $\mathbb{P}$
we have the universal quotient 
$$p_{\mathbb{P}}^*\mathcal{V}^{\oplus N}\to \mathcal{O}_{\mathbb{P}}(1)\to 0\;.$$
Therefore we have a morphism between vector bundles over $\widetilde{C}\times \mathbb{P}$ :
\begin{align}\label{eqn-universal sections}
\widetilde v_{2,\mathbb{P}}^*\mathcal{O}_{\mathbb P}(-1) \to [\widetilde v_{2,\mathbb{P}}^*\mathcal{V}^{\vee}]^{\oplus N}
= & 
\widetilde v_{2,\mathbb{P}}^*p_{\mathbb{P}}^*[\widetilde{v}_{2*}(\mathcal{P}\otimes\widetilde v^*_{1}\pi^*\mathcal{O}(m))]^{\oplus N} \nonumber\\
= & ({\rm id}\times p_{\mathbb{P}})^*\widetilde{v}_2^*[\widetilde{v}_{2*}(\mathcal{P}\otimes \widetilde v^*_{1}\pi^*\mathcal{O}(m))]^{\oplus N} \nonumber\\
\to & ({\rm id}\times p_{\mathbb{P}})^*[\mathcal{P}\otimes \widetilde v^*_{1}\pi^*\mathcal{O}(m))]^{\oplus N} \nonumber\\
\cong & ({\rm id}\times p_{\mathbb{P}})^*\mathcal{P}^{\oplus N}\otimes \widetilde v^*_{1}\pi^*\mathcal{O}(m)\;.
\end{align}
Here the second equality follows from the following diagram, the second arrow is given by the adjunction morphism and the last isomorphism is given by projection formula.
\[
\begin{tikzcd}
    \widetilde{C}\times \mathbb{P} \ar[r,"\widetilde v_{2,\mathbb{P}}"] \ar[d,"{\rm id}\times p_{\mathbb{P}}"] & \mathbb{P} \ar[d,"p_{\mathbb{P}}"] \\
    \widetilde{C} \times J_{4(g-1)(\widetilde{C})} \ar[r,"\widetilde{v}_2"] & J_{4(g-1)}(\widetilde{C})
\end{tikzcd}
\]
Restricting the morphism (\ref{eqn-universal sections}) to  $\widetilde{C}\times x$ for $x\in \mathbb{P}$, we get $N$ sections of the line bundle (twisted by $\pi^*\mathcal{O}(m)$) corresponding to $p_{\mathbb{P}}(x)\in J_{4(g-1)}(\widetilde{C})$ and this produces a bijection between points of $\mathbb{P}$ and line bundles $L$ of degree $4(g-1)$ on $\widetilde{C}$ along with $N$ sections of $L\otimes \pi^*\mathcal{O}(m)$ upto scalars. Note that we also have an action of $GL(N)$ on $\mathbb{P}$ which permutes these sections. 

Let $$v_{1,\mathbb{P}}:C\times \mathbb{P}\to C\,, v_{2,\mathbb{P}}:C\times \mathbb{P}\to \mathbb{P}$$ be the projections. The morphism (\ref{eqn-universal sections}) can be thought of as $N$ section of the bundle 
$$({\rm id}\times p_{\mathbb{P}})^*\mathcal{P}\otimes \widetilde v^*_{1}\pi^*\mathcal{O}(m)\otimes \widetilde{v}_{2,\mathbb{P}}^*\mathcal{O}_{\mathbb{P}}(1)\,.$$
Applying $(\pi \times {\rm id})_*$ and projection formula we get $N$ sections of the following vector bundle on $C\times \mathbb{P}$:
$$(\pi \times {\rm id})_*[({\rm id}\times p_{\mathbb{P}})^*\mathcal{P}]\otimes  v^*_{1,\mathbb{P}}\mathcal{O}(m)\otimes v_{2,\mathbb{P}}^*\mathcal{O}_{\mathbb{P}}(1)\,,$$
i.e. we have a morphism 
\begin{equation}\label{eqn-direct image of universal sectins}
\mathcal{O}^N\to (\pi \times {\rm id})_*[({\rm id}\times p_{\mathbb{P}})^*\mathcal{P}]\otimes  v^*_{1,\mathbb{P}}\mathcal{O}(m)\otimes v_{2,\mathbb{P}}^*\mathcal{O}_{\mathbb{P}}(1)\,.
\end{equation}
Let $\mathbb{U}\subset \mathbb{P}$ be the open set such that (\ref{eqn-direct image of universal sectins}) 
restricted to $C\times \mathbb{U}$ is a surjection. Note that its points correspond to line bundles $L$ of degree $4(g-1)$ over $\widetilde{C}$ with $N$ sections of the $L\otimes \pi^*\mathcal{O}(m)$ such that the induced map $\mathcal{O}^N\to \pi_*L \otimes \mathcal{O}(m)$ is a surjection. Note that $\mathbb{U}$ is again $GL(N)$-equivariant. From the universal property of the Quot Scheme $\mathcal{Q}$ introduced in Section \ref{sec:Brill-Noether loci on U} which parametrizes quotients of $\mathcal{O}^N$ on $C$ of rank $2$ and degree $2(g-1)+2m$, we get a $GL(N)$-equivariant morphism 
$$\mathbb{U}\to \mathcal{Q}\,.$$
By taking an equivariant resolution of points of indeterminacy \cite[Theorem 1]{RY} of the rational map $\mathbb{P}\dashrightarrow{} \mathcal{Q}$ we get a smooth projecive variety $\mathbb{P}'$ with a morphism $\psi:\mathbb{P}'\to \mathcal{Q}$. Hence we have a diagram
\begin{equation}\label{eqn-blow up}
\begin{tikzcd}
    \mathbb{P}' \ar[d,"\phi"] \ar[dr,"\psi"] &  \\
    \mathbb{P} \ar[r, dashed] \ar[d,"p_{\mathbb{P}}"] & \mathcal{Q} \ar[d, dashed] \\
    J_{4(g-1)}(\widetilde{C}) \ar[r, dashed] & U_C(2,2(g-1))
\end{tikzcd}
\end{equation}
Here $\phi$ is birational and both $\phi$ and $\psi$ are $GL(N)$-equivariant. Since the rational map $\pi_{\ast}$ on $J_{4(g-1)}(\widetilde{C})$ is dominant (cf. Theorem \ref{T2}), the morphism
$\psi$ is surjective.

Define $S_0:=\psi^{-1}(\mathcal{R})$ and let $\psi_0:=\psi|_{S_0}$ and $\phi_0:=\phi|_{S_0}$ i.e. we have the diagram
\[
\begin{tikzcd}
    S_0 \ar[r, "\psi_0"] \ar[d, "\phi_0"] & \mathcal{R} \ar[d, hook]\\
    \mathbb{P}  & \mathcal{Q}
\end{tikzcd}
\]
Then we have the following lemma:
\begin{lemma}\label{S0}
$\psi_0^{-1}(W^r_{\mathcal{R},\mathcal{F}'(-m)})=(p_{\mathbb{P}}\circ \phi_0)^{-1}(W^r_{4(g-1)}(\widetilde{C}))\,.$
\end{lemma}
\begin{proof}
By Lemma \ref{functoriality of Brill-Noether loci} we have that
$\psi_0^{-1}(W^r_{\mathcal{R},\mathcal{F}'(-m)})= W^r_{S_0, \psi_0^*\mathcal{F}'(-m)}=W^r_{S_0,}$
and hence by the universal property of Quot schemes, it is the $r$-th Brill-Noether loci associated to the family 
$$(\pi \times {\rm id})_*[({\rm id}\times p_{\mathbb{P}})^*\mathcal{P}]\otimes v_{2,\mathbb{P}}^*\mathcal{O}_{\mathbb{P}}(1)=
(\pi \times {\rm id})_*[({\rm id}\times p_{\mathbb{P}})^*\mathcal{P}\otimes \widetilde v_{2,\mathbb{P}}^*\mathcal{O}_{\mathbb{P}}(1)]
.$$
By Lemma \ref{Brill Noether loci of pushforward of a bundle} this is same as the $r$-th Brill Noether locus associated to the line bundle 
$$({\rm id}\times p_{\mathbb{P}})^*\mathcal{P}\otimes \widetilde v_{2,\mathbb{P}}^*\mathcal{O}_{\mathbb{P}}(1)\;.$$
Now it follows from the definition of Brill-Noether loci that this locus is same as $W^r_{S_0, ({\rm id}\times p_{\mathbb{P}})^*\mathcal{P}}$ which again by Lemma \ref{functoriality of Brill-Noether loci} is same as 
pullback of $W^{r}_{4(g-1)}(\widetilde{C})$ via $p_{\mathbb{P}}\circ \phi_0$. This completes the proof of the lemma.
\end{proof}


\subsection{Poincar\'e type relations on moduli stacks}

Assume that $C$ is a general smooth projective curve and  $\tilde{C}\rightarrow C$ is a general smooth spectral curve, which is a double ramified covering of $C$.
We recall the cycle class maps between the Chow rings and cohomology rings
$$
CH^*(X,\mathbb{Q})\rightarrow H^*(X,\mathbb{Q})
$$
of the  moduli stacks $X=  \mathcal{U}_C(2,2(g-1))$ and $X=\mathcal{SU}_C(2, \mathcal{L})$ (for $\mathcal{L}\in J_{2(g-1)}(C)$) (cf. \S \ref{singmoduli}).

We start with the following lemma.

\begin{lemma}\label{descend}
The divisor classes on $J_{4(g-1)}(\widetilde{C}) $ descend to the moduli stack $\mathcal{U}_C(2,2(g-1))$ via diagram \eqref{eqn-blow up}.
\end{lemma}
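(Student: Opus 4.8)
The statement we need is that every divisor class on $J_{4(g-1)}(\widetilde{C})$ descends, via the roof diagram \eqref{E199}, to a divisor class on the moduli stack $\mathcal{U}_C(2,2(g-1))$. Because $S$ is smooth and projective, $\mathrm{Pic}(S)$ makes sense, and I want to show that the image of $\phi^*:\mathrm{Pic}(J_{4(g-1)}(\widetilde{C}))\to\mathrm{Pic}(S)$ lands inside $\psi^*\,\mathrm{Pic}(\mathcal{U}_C(2,2(g-1)))$ (or at least does so rationally, in $\mathrm{Pic}\otimes\mathbb{Q}$ or in $CH^1_{\mathbb{Q}}$, which is all the later argument needs). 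The key structural input is Theorem \ref{T4}: for a general curve, $\mathrm{NS}(J_{4(g-1)}(\widetilde{C}))_{\mathbb{Q}}$ — or more precisely, since $\widetilde{C}$ need not be general, the Hodge-class part relevant to us via Theorem \ref{Bis}/Theorem \ref{T6} — is spanned by the theta class $\Theta$ together with the Prym-theta class coming from the isogeny decomposition $J(\widetilde{C})\sim J(C)\times P$ of Theorem \ref{T3}. So it suffices to check that these few generating divisor classes descend.

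**Key steps, in order.** First I would reduce to Néron–Severi (rational) classes: since the later sections work with cohomology/Chow classes of stacks with $\mathbb{Q}$-coefficients, it is enough to descend classes in $\mathrm{NS}(J_{4(g-1)}(\widetilde{C}))_{\mathbb{Q}}$, and by Theorem \ref{T6} this group is generated by $[\Theta]$ and one further class $\eta$ pulled back from the Prym factor $P$ under the isogeny. Second, the class $[\Theta]$ descends essentially by construction: by Theorem \ref{T42}, $\pi_\ast^{-1}(\widetilde{W^0_{2,2(g-1)}}) = \Theta|_{J^{ss}}$, and since $\phi:\phi^{-1}(J^{ss})\to J^{ss}$ is an isomorphism (property (4) of $S$) while $\widetilde{W^0_{2,2(g-1)}}$ is a genuine divisor on ${U}_C(2,2(g-1))$ (Sundaram), we get $\phi^*[\Theta] = \psi^*[\widetilde{W^0_{2,2(g-1)}}]$ as divisor classes on $S$ up to classes supported on the exceptional locus $S\setminus\phi^{-1}(J^{ss})$; because $\phi$ is birational with $S$ smooth, $\phi^*[\Theta]$ and $\psi^*[\widetilde{W^0_{2,2(g-1)}}]$ differ by a combination of $\phi$-exceptional divisors, and one argues these are themselves pulled back (rationally) or can be absorbed. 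Concretely I would use that $\psi^*[\widetilde{W^0_{2,2(g-1)}}]$ is the class of $\mathcal{W}^0_S$ from Lemma \ref{maincomp2}, and $\phi^*[\Theta]$ is its class plus exceptional corrections, and then invoke that $\phi^*: CH^1(J_{4(g-1)}(\widetilde{C}))_{\mathbb{Q}}\to CH^1(S)_{\mathbb{Q}}$ composed with $\phi_*$ is the identity, so the exceptional part is killed after pushing forward to $J_{4(g-1)}(\widetilde{C})$ and this pins down the descent. Third, for the remaining generator: the Prym-theta class $\eta$ is, up to torsion and the isogeny, $\tfrac12([\Theta] - \pi^*[\Theta_C])$ where $\Theta_C$ is the theta divisor on $J(C)$ (this is the standard relation for ramified double covers, cf. Mumber/Theorem \ref{T3}); the pullback $\pi^*[\Theta_C]$ is the pullback along $\widetilde{C}\to C$ of a class on $J(C)$, which in turn descends to $\mathcal{U}_C(2,2(g-1))$ because the $\det$ morphism $\mathcal{U}_C(2,2(g-1))\to J_{2(g-1)}(C)$ is defined on the stack level and is compatible, via the BNR construction, with $\mathrm{Nm}:J_{4(g-1)}(\widetilde{C})\to J_{2(g-1)}(C)$. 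So $\eta$ descends because both $[\Theta]$ and $\pi^*[\Theta_C]$ do. Assembling these, every generator of $\mathrm{NS}_{\mathbb{Q}}$ descends, hence so does every divisor class.

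**The main obstacle.** The delicate point is the bookkeeping on the exceptional locus of $\phi$: the diagram \eqref{E199} only commutes where the rational map $\pi_\ast$ is defined, and $S$ is obtained by resolving $J^{ss}\to\mathcal{Q}$ together with its singularities, so I do not have tight control of $S\setminus\phi^{-1}(J^{ss})$. I need to argue that, at the level of divisor classes with $\mathbb{Q}$-coefficients, a class of the form $\phi^*D$ for $D$ a divisor on $J_{4(g-1)}(\widetilde{C})$, which agrees with $\psi^*(\text{something})$ over the open set $\phi^{-1}(J^{ss})$, in fact agrees globally after adjusting by $\psi$-pulled-back boundary classes — or simply that the complement is small enough (the construction should force it to be a proper closed subset whose divisorial components are themselves pulled back from ${U}_C$ or from $J_{4(g-1)}(\widetilde{C})$). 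I expect this to be handled by the excision/localization exact sequence for $CH^1$, $\mathrm{Pic}$ together with smoothness of $S$, reducing the claim to the already-established facts that $[\Theta]$ (via $\widetilde{W^0_{2,2(g-1)}}$) and the determinant-pulled-back classes descend, but making this airtight is where the real work lies.
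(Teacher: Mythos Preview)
Your reduction to the two N\'eron--Severi generators is the same first move the paper makes, but from there the arguments diverge. The paper does not chase the generators through the Brill--Noether divisor $\widetilde{W^0_{2,2(g-1)}}$ and the determinant map separately; instead it invokes \cite[Proposition~5.7]{BNR}, which supplies a commutative square
\[
\xymatrix{P'\times J_{g-1}(C)\ar[r]\ar[d]& J_{4(g-1)}(\widetilde{C})\ar[d]^{\pi_{\ast}} \\
{SU}_C(2) \times J_{g-1}(C) \ar[r] & {U}_C(2, 2(g-1))}
\]
and, crucially, the fact that the indeterminacy locus of $\pi_\ast$ (and of its restriction to $P'$) has codimension at least two. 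That codimension bound is exactly what dissolves your ``main obstacle'': divisor classes are insensitive to modifications in codimension $\geq 2$, so no bookkeeping on the exceptional locus of $\phi$ is needed at all, and the polarisations on $P'$ and $J_{g-1}(C)$ descend directly to ${U}_C(2,2(g-1))$ and then by functoriality to the stack.

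Your route is not wrong in spirit, but it is harder to close. The identification $\phi^\ast[\Theta]=\psi^\ast[\widetilde{W^0_{2,2(g-1)}}]$ only holds on $\phi^{-1}(J^{ss})$, and your plan to ``absorb'' the exceptional corrections is precisely where you would end up re-proving (or needing) the codimension-two statement from \cite{BNR}. Likewise, the formula $\eta=\tfrac12([\Theta]-\pi^\ast[\Theta_C])$ for the second generator is heuristic; the clean statement is that the two generators come from the product $P'\times J_{g-1}(C)$ via the isogeny, and \cite{BNR} already matches that product with $SU_C(2)\times J_{g-1}(C)$ downstairs. So the paper's approach buys you a one-line resolution of the exceptional-locus issue at the cost of a black-box citation, whereas your approach would give a more self-contained argument but requires you to establish that codimension bound (or an equivalent) yourself.
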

\begin{proof}
Recall from \cite[Proposition 5.7]{BNR} a commutative diagram:
\[
\begin{tikzcd}
P'\times J_{g-1}(C)\ar[r]\ar[d, dashed]& J_{4(g-1)}(\widetilde{C})\ar[d,dashed, "\pi_{\ast}"] \\
{SU}_C(2) \times J_{g-1}(C) \ar[r] & {U}_C(2, 2(g-1)).
\end{tikzcd}
\]
Here $P'$ is the Prym variety associated to the covering $\widetilde{C}\rightarrow C$ and ${SU}_C(2)$ is the fixed determinant moduli space with determinant isomorphic to the trivial line bundle over $C$. The map $P'\times J_{g-1}(C)$ and $J_{4(g-1)}(\widetilde{C})$ is an isogeny and hence it induces isomorphisms on Picard groups tensored with $\mathbb{Q}$. In loc.cit it is shown that  the indeterminacy locus of the dominant rational map $\pi_*$ has codimension at least two and the same is true when restricted to $P'$. Therefore,  since the rational map $\pi_*: J_{4(g-1)}(\tilde{C}) \dashrightarrow U_C(2,2(g-1))$ factors via $\mathcal{U}_C(2,2(g-1))$, it is enough to show that any rational multiple of a divisor on $P'\times J_{g-1}(C)$ is a pullback of a divisor from $U_C(2,2(g-1))$.

Now since the Picard rank of the Jacobian of a general double cover $\widetilde{C}\to C$ is $2$ \cite[Corollary 5.3]{B}, we have that  Picard rank of $P'\times J_{g-1}(C)$ is $2$. Therefore the Neron-Severi space of $P'\times J_{g-1}(C)$ over $\mathbb Q$ is generated by pullbacks from divisors from $P'$ and $J_{g-1}(C)$. Since the Picard rank of the general Prym variety $P'$ is $1$ and the indeterminacy locus of $P'\to SU_C(2)$ has codimension $\geq 2$, its divisors descend to $SU_C(2)$ and hence the divisors of $P'\times J_{g-1}(C)$ descend to $SU_C(2)\times J_{g-1}(C)$. Now since the map $SU_C(2)\times J_{g-1}(C)\to U_C(2,2(g-1))$ is \'etale,(in particular finite) and both schemes have Picard rank $2$. This implies that their Neron-Severi space over $\mathbb{Q}$ are isomorphic. This shows that the divisors of $SU_C(2)\times J_{g-1}(C)$ descend to $U_C(2,2(g-1))$. This completes the proof of the lemma.

\if
The proof of loc.cit. implies that the polarisations on $P'$ and $J_{g-1}(C)$ descend on the moduli space ${U}_C(2,2(g-1))$. By functoriality, via the diagram \eqref{eqn-blow up}, the divisor classes descend on $\mathcal{U}_C(2,2(g-1))$.
\fi
\end{proof}

We now show the following.

\begin{theorem}\label{MT}
The cohomology class of a Brill-Noether subvariety on the moduli stack $\mathcal{U}_C(2,2(g-1))$ can be expressed as a polynomial on the divisor classes. In particular, the tautological algebra generated by the Brill-Noether loci is generated by the divisor classes.
\end{theorem}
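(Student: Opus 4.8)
The strategy is to transport the Poincaré-type relations on the Jacobian $J_{4(g-1)}(\widetilde{C})$ of a general spectral curve across the diagram \eqref{E199} to the moduli stack. The essential geometric input is already assembled: by Lemma \ref{maincomp2} a component $\mathcal{W}^r_S$ of the Brill-Noether locus on the resolution $S$ coincides with $\overline{\phi^{-1}(W^r_{2(g-1)}(\widetilde{C}))\cap S_0}$, and $\phi$ restricts to an isomorphism over $J^{ss}$. The proof will run in three stages: first, express $[W^r_{2(g-1)}(\widetilde{C})]$ as a polynomial in divisor classes on $J(\widetilde{C})$; second, pull that relation back via $\phi^*$ and push it forward to $\mathcal{U}_C(2,2(g-1))$ via $\psi$; third, identify the pushed-forward divisor classes with genuine divisor classes on the moduli stack using Lemma \ref{descend}.

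\emph{Step 1 (on the spectral Jacobian).} Apply Theorem \ref{Bis}: since $\widetilde{C}\to C$ arises as a general $2$-sheeted spectral cover, Theorem \ref{T6} applies to $J(\widetilde{C})$, so the tautological algebra generated by the Brill-Noether loci $W^r_{2(g-1)}(\widetilde{C})$ sits inside the subalgebra generated by the Néron-Severi classes, which by Theorem \ref{T3} is spanned (up to isogeny) by the pullback $\pi^*\Theta_C$ and the principal polarisation of the Prym $P$. Hence $[W^r_{2(g-1)}(\widetilde{C})]$ is a $\mathbb{Q}$-polynomial in these two divisor classes. (One should note that it suffices to work with the class of $W^r$ up to the ambiguity of which component dominates; since $C$ is general, Theorem \ref{dimension of BN loci for general curve} and Theorem \ref{irreducibility of BN loci for general curve} control dimension and — for $\rho>0$ — irreducibility.)

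\emph{Step 2 (transport to $S$, then to the stack).} Pulling back along the birational morphism $\phi:S\to J_{4(g-1)}(\widetilde{C})$ gives $[\phi^{-1}(W^r_{2(g-1)}(\widetilde{C}))] = \phi^*[W^r_{2(g-1)}(\widetilde{C})]$ as a polynomial in $\phi^*$ of the divisor classes, at least on the open locus $S_0$ where $\phi$ is an isomorphism; by Lemma \ref{maincomp2} this polynomial identity holds for the class $[\mathcal{W}^r_S]$ in $H^*(S,\mathbb{Q})$ up to classes supported on the exceptional locus, which have higher codimension and can be absorbed (this is where one uses that the indiscrepancy/exceptional locus has codimension $\geq 2$, as recorded in the proof of Lemma \ref{descend}). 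Then apply the Gysin pushforward $\psi_*$ for the generically finite morphism $\psi:S\to U_C(2,2(g-1))$, or better, work with the stack map and use the equivariant/stack cycle class formalism of \S\ref{CohBN} and \S\ref{singmoduli}: the point is that $\psi_*\phi^*(\text{divisor class})$ is, by Lemma \ref{descend}, itself a divisor class on $\mathcal{U}_C(2,2(g-1))$, and $\psi_*$ of a polynomial in divisor classes becomes — after dividing by the degree of $\psi$ and using the projection formula — a polynomial in the descended divisor classes. This yields $[\widetilde{W^r_{2,2(g-1)}}]$ as a $\mathbb{Q}$-polynomial in divisor classes on the stack.

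\emph{The main obstacle.} The delicate point is the passage from $S$ to $\mathcal{U}_C(2,2(g-1))$: $\psi$ is only generically finite onto a singular coarse space, so one cannot naively invoke the projection formula on $U_C(2,2(g-1))$ itself. The resolution is to do everything on the smooth moduli stack (equivalently on $\mathcal{R}$, equivariantly) as set up in \S\ref{CohBN}, where cycle classes and Gysin maps behave well; one must check that the Brill-Noether class on the stack, restricted over the stable locus and pulled back to $S_0$, really is the class of $\mathcal{W}^r_S$ — but this is exactly the content of Lemma \ref{S0}, Lemma \ref{maincomp2}, and Lemma \ref{qchowcoh}. A secondary subtlety is that $\widetilde{W^r_{2,2(g-1)}}$ need not be of pure dimension (Remark following Lemma \ref{dimension of brill noether loci for stable locus}), so one must argue component-by-component and check that the non-dominant components contribute either zero or a lower-dimensional (hence still tautological, by downward induction on $r$) class; alternatively one defines the class via the determinantal/equivariant construction of \S\ref{CohBN}, which sidesteps the issue. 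Finally, that the resulting polynomial is not identically zero — i.e., nontriviality of the tautological algebra — is deferred to \cite{La-Ne-Pr}, as stated in the introduction.
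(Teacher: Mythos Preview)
Your proposal assembles all the right ingredients and follows the same overall architecture as the paper: express the Brill--Noether class on $J_{4(g-1)}(\widetilde{C})$ as a polynomial in divisor classes via Theorem~\ref{Bis}, transport the relation to $S$ via $\phi^*$, and then descend to the moduli stack using Lemma~\ref{descend}. The difference lies in the descent mechanism. You propose to use the Gysin \emph{pushforward} $\psi_*$ together with the projection formula, arguing that since $\phi^*D_i=\psi^*D_i'$ (this is what Lemma~\ref{descend} gives) one has $\psi_*(\psi^*P(D_1',D_2'))=(\deg\psi)\,P(D_1',D_2')$. The paper instead factors $\psi$ through the smooth Quot-scheme $\mathcal{R}$ via $q:S\to\mathcal{Q}$, restricts to $S':=q^{-1}(\mathcal{R})$, and uses the \emph{injectivity of pullback} $\psi^*_{coh}:H^*(\mathcal{R},\mathbb{Q})\hookrightarrow H^*(S',\mathbb{Q})$ (which holds because $S'\to\mathcal{R}$ is proper generically finite): since both $[\widetilde{W^r_{\mathcal{R}}}]$ and the polynomial $P(D_1',D_2')$ pull back to the same class $[W^r_{S'}]$, they coincide in $H^*(\mathcal{R},\mathbb{Q})$, and then one uses that the divisor classes already live in the equivariant cohomology $H^*_G(\mathcal{R},\mathbb{Q})=H^*(\mathcal{U}_C(2,2(g-1)),\mathbb{Q})$.

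The paper's pullback argument is somewhat cleaner for exactly the reason you flag in your ``main obstacle'': it never needs to construct a pushforward into a singular coarse space or into the stack, nor to verify that $\psi_*[\mathcal{W}^r_S]$ equals $(\deg\psi)\cdot[\widetilde{W^r_{2,2(g-1)}}]$ (which requires controlling what happens over the strictly semistable/exceptional locus). Your pushforward approach is valid in principle once Lemma~\ref{descend} is invoked, but making it rigorous forces you back to working on $\mathcal{R}$ anyway, at which point pullback injectivity is the shorter route. Regarding the pure-dimension issue you raise: the paper does not argue by downward induction on $r$ but simply defines the class via the equivariant Chow class of the determinantal locus $W^r_{\mathcal{R},\mathcal{F}'(-m)}$ (Lemma~\ref{BNequivclass}), which is well-defined regardless of dimension, and then compares it to $[W^r_{S'}]$ directly via \eqref{BNfinalclass}.
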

\begin{proof}
Recall that we have the following diagram
\[
\begin{tikzcd}
     S_0 \ar[d,"p_{\mathbb{P}}\circ \phi_0"] \ar[r, "\psi_0"] & \mathcal{R} \ar[d] \\
    J_{4(g-1)}(\widetilde{C}) \ar[r,dashed] & \mathcal{U}_C(2,2(g-1))
\end{tikzcd}
\]
Note that $\psi_0$ is $G$-equivariant and since the morphism 
$\psi:\mathbb{P}'\to \mathcal{Q}$ is dominant and proper, $\psi_0$ is surjective. In particular we have an injection
$$H^*(\mathcal{U}_C(2,2(g-1)),\mathbb{Q})=H^*_G(\mathcal{R},\mathbb{Q})\to H^*_G(S_0,\mathbb{Q})$$
Moreover, by Lemma \ref{descend} we have the divisors classes of $J_{4(g-1)}(\widetilde{C})$ descend to the classes in $H^*(\mathcal{U}_C(2,2(g-1))$, it is enough to show that pullback of the Brill-Noether class $W^r_{\mathcal{R},\mathcal{F}'(-m)}$ to $S_0$ can be expressed as a polynomial on the pullback of  divisor classes from $J_{4(g-1)}(\widetilde{C})$ via $p_{\mathbb{P}}\circ \phi_0$. Now by Lemma \ref{S0} pullback of $W^r_{\mathcal{R},\mathcal{F}'(-m)}$ to $S_0$ is given by the class $(p_{\mathbb{P}}\circ \phi_0)^{-1}(W^r_{4(g-1)}(\widetilde{C}))$. Now by Theorem \ref{Bis}, the cohomology class of this Brill-Noether locus $W^r_{d}(\widetilde{C})\subset J_{4(g-1)}(\widetilde{C})$ is a polynomial expression on the divisor classes in $H^*(J_{4(g-1)}(\widetilde{C}))$. This completes the proof of the theorem.

\if
Recall that we have the  morphism $$p_{\mathbb{P}}\circ \phi_0:S_0\to J_{4(g-1)}\,.$$
By Theorem \ref{Bis}, the cohomology class of the Brill-Noether locus $W^r_{d}(\widetilde{C})\subset J_{4(g-1)}(\widetilde{C})$ is a polynomial expression on the divisor classes in $H^*(J_{4(g-1)}(\widetilde{C}))$. This implies that in $H^*(S,\mathbb{Q})$, the pullback of the cohomology class $[W^r_{d}(\widetilde{C})]$ is the cohomology class of  the Brill-Noether locus $W^r_{S_0}$ and it is a  polynomial expression on the pullback of the divisor classes on $J_{4(g-1)}(\widetilde{C})$.

Recall that we have the $G$-equivariant morphism $\psi_0:S_0\to \mathcal{R}$ and since the morphism $\mathbb{P}'\to \mathcal{Q}$ is proper and quasifinite, $\psi_0$ is also proper and quasifinite. Therefore the morphism
$$H^*(\mathcal{U}_C(2,2(g-1)),\mathbb{Q})=H^*_G(\mathcal{R},\mathbb{Q})\to H^*_G(S_0,\mathbb{Q})$$ is an injection.

Recall the morphism
$$
\phi:S \rightarrow J_{4(g-1)}(\widetilde{C}).
$$
Now $\phi$ is a birational morphism and let $E\subset S$ be the exceptional locus.
Hence, we have the following equality of cohomology rings:
$$
H^*( S,\mathbb{Q})= H^*(J_{4(g-1)}(\widetilde{C}),\mathbb{Q})\oplus H^*(E,\mathbb{Q}). 
$$

By Theorem \ref{Bis}, the cohomology class of the Brill-Noether locus $W^r_{d}(\widetilde{C})\subset J_{4(g-1)}(\widetilde{C})$ is a polynomial expression on the divisor classes in $H^*(J_{4(g-1)}(\widetilde{C}))$. This implies that in $H^*_{G}(S,\mathbb{Q})$, the pullback of the cohomology class $[W^r_{d}(\widetilde{C})]$ is the cohomology class of  the Brill-Noether locus $W^r_S \subset S$ and it is a  polynomial expression on the pullback of the divisor classes on $J_{4(g-1)}(\widetilde{C})$.

Recall that $S$ was constructed such that $q: S\rightarrow \mathcal{Q}$ and $\psi=\mu\circ q$, wherever $\mu$ is defined.

Denote $S':= q^{-1}(\mathcal{R})$.  Since $\mathcal{R}$ is a smooth variety  there are pullback maps on the Chow cohomologies, and using \eqref{liftchow}, we have:
$$
CH^*(\mathcal{U}_C(2,2(g-1)))=CH^*_G(\mathcal{R})\stackrel{\mu_{st}^*} {\rightarrow} CH^*(\mathcal{R})\stackrel{q^*}{ \rightarrow} CH^*(S').
$$
  By Lemma \ref{qchowcoh}, 
 \begin{equation}\label{BNfinalclass}
 q^*\mu_{st}^*[\widetilde{W^r_{2,2(g-1)}}]= q^* [W^r_{\mathcal{R}}]= [W^r_{S'}].
 \end{equation}
 
 Since $\mathcal{R}\subset \mathcal{Q}$ is an open subvariety of $\mathcal{Q}$, using the localization sequence
 $$
  CH^*(S) \rightarrow CH^*(S') \rightarrow 0,
  $$
  we deduce that $[W^r_S] \mapsto [W^r_{S'}]$.
  
  The above Chow cohomology diagram is compatible, via cycle class maps,  with the cohomology rings:
  $$
  H^* (\mathcal{U}_C(2,2g-2)),\mathbb{Q}) \stackrel{\mu_{st}^*}{\rightarrow}  H^*(\mathcal{R},\mathbb{Q})\stackrel{q^*}{ \rightarrow} H^*(S',\mathbb{Q})
 $$
 together with a restriction
 $$
 H^*(S,\mathbb{Q}) \stackrel{t}{\rightarrow} H^*(S',\mathbb{Q}).
 $$
  In particular, $[W^r_S]\mapsto [W^r_{S'}]$,  and it is a  polynomial expression on the pullback of the divisor classes on $J_{4(g-1)}(\widetilde{C})$.

Since the map $S'\rightarrow \mathcal{R}$,  is a proper, generically finite morphism, the  map on the  cohomologies is injective:
$$
\psi^*_{coh}:\,H^*(\mathcal{R},\mathbb{Q}) \hookrightarrow H^*(S',\mathbb{Q}).
$$ 

 Furthermore, by \eqref{BNfinalclass}, we deduce that 
\begin{equation}\label{equalclass}
\psi^*_{coh}[\widetilde{W^r_\mathcal{R}}]\,=\, [W^r_{S'}].
 \end{equation}

 By Lemma \ref{maincomp2} and Lemma \ref{descend},  we know that the divisor classes descend (equivalently they are $\GL(N)$-invariant classes). In other words, any polynomial expression on the divisor classes lies in the equivariant cohomology $H^*_G(\mathcal{R},\mathbb{Q})$. 
This implies that any polynomial expression on the divisor classes  descends on the cohomology of the moduli stack. 
 Now  \eqref{equalclass} implies that the cohomology class of the Brill-Noether locus in $H^*(\mathcal{U}_C(2,2(g-1)),\mathbb{Q})$  is expressible as a polynomial on the divisor classes. 

 \fi
\end{proof}

\subsection{Relations on the moduli stack $\mathcal{SU}_C(2,\mathcal{L})$}

Consider the determinant morphism
$$
\det: \mathcal{U}_C(2,2(g-1)) \rightarrow J_{2(g-1)}(C).
$$
The inverse image ${\det}^{-1}(\mathcal{L})$ is the sub(moduli) stack $\mathcal{SU}_C(2,\mathcal{L})$, for a line bundle $\mathcal{L}$ on $C$ of degree $2(g-1)$. 
 Denote the Brill-Noether locus
\begin{equation}\label{definition-bn loci over moduli space with fixed determinant}
\widetilde{W^{r,\mathcal{L}}_{2,2(g-1)}}\,:= \,   \widetilde{W^r_{2,2(g-1)}} \cap \mathcal{SU}_C(2,\mathcal{L})\,.
\end{equation}
\begin{corollary}\label{MT2}
The cohomology class of a Brill-Noether locus $\widetilde{W^{r,\mathcal{L}}_{2,2(g-1)}}$, as in \eqref{definition-bn loci over moduli space with fixed determinant}, in the moduli stack $\mathcal{SU}_C(2,\mathcal{L})$ is a polynomial expression on the class of the Theta divisor, with rational coefficients. In particular the tautological algebra is generated by the class of the Theta divisor $\Theta$.

\end{corollary}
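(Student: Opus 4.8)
The plan is to deduce the corollary from Theorem \ref{MT} by restricting Brill--Noether classes along the determinant morphism. Write $d=2(g-1)$ and let $\iota:\mathcal{SU}_C(2,\mathcal{L})\hookrightarrow\mathcal{U}_C(2,d)$ be the closed substack ${\det}^{-1}(\mathcal{L})$. The determinant morphism $\det:\mathcal{U}_C(2,d)\to J_d(C)$ is smooth --- already on the level of the Quot scheme $\mathcal{R}$, where it is the composition of the classifying map with the determinant of the universal quotient --- so $\mathcal{SU}_C(2,\mathcal{L})$ is the fibre of a smooth morphism, and in particular $\iota$ is a regular embedding. I would first check that the Brill--Noether construction of \S\ref{CohBN} is compatible with this base change: denoting by $\mathcal{R}_{\mathcal{L}}\subset\mathcal{R}$ the preimage of $\mathcal{SU}_C(2,\mathcal{L})$, the $GL(N)$-invariant locus $W^r_{\mathcal{R},\mathcal{F}'(-m)}$ restricts over $\mathcal{R}_{\mathcal{L}}$, by Lemma \ref{functoriality of Brill-Noether loci}, to the Brill--Noether locus built directly from the restricted family. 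Carrying this through the equivariant cycle-class formalism shows that the cohomology class of $\widetilde{W^{r,\mathcal{L}}_{2,2(g-1)}}$ defined in \eqref{definition-bn loci over moduli space with fixed determinant} is exactly the restriction $\iota^\ast[\widetilde{W^r_{2,2(g-1)}}]$ in $H^\ast(\mathcal{SU}_C(2,\mathcal{L}),\mathbb{Q})$.

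By Theorem \ref{MT}, $[\widetilde{W^r_{2,2(g-1)}}]$ is a polynomial $P$ with rational coefficients in the divisor classes $\delta_1,\dots,\delta_k$ of $\mathcal{U}_C(2,d)$. Since $\iota^\ast$ is a ring homomorphism, $[\widetilde{W^{r,\mathcal{L}}_{2,2(g-1)}}]=P(\iota^\ast\delta_1,\dots,\iota^\ast\delta_k)$, a polynomial in the restricted divisor classes. It therefore suffices to prove that each $\iota^\ast\delta_j$ is a rational multiple of the class $[\Theta]$ of the Theta divisor on $\mathcal{SU}_C(2,\mathcal{L})$; then $P$ collapses to a polynomial in $[\Theta]$ and the corollary follows, its nontriviality being supplied by \cite{La-Ne-Pr} as already noted.

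For the last point I would use Lemma \ref{descend}: the divisor classes on $\mathcal{U}_C(2,d)$ descend from divisor classes on $J_{4(g-1)}(\widetilde{C})$, and by the proof of \cite[Proposition 5.7]{BNR} the principal polarisation of $J_{4(g-1)}(\widetilde{C})$ decomposes, via the isogeny of Theorem \ref{T3}, into the Prym contribution (descending to ${SU}_C(2)$) and the $J_{g-1}(C)$ contribution (descending to the Jacobian factor of ${U}_C(2,d)$). Restricting to the fibre $\mathcal{SU}_C(2,\mathcal{L})$ of $\det$, the classes coming from the $J_{g-1}(C)$ factor, being pulled back from the base $J_d(C)$ of $\det$, restrict to zero since the restriction to a fibre of a positive-degree class pulled back from the base vanishes; the surviving Prym contribution descends to a rational multiple of $[\Theta]$, because $\mathrm{Pic}\bigl(\mathcal{SU}_C(2,\mathcal{L})\bigr)$ is infinite cyclic generated by the determinant-of-cohomology line bundle, of which $\Theta$ is a generator (Drezet--Narasimhan, \cite{DN}; see also \cite{BNR}), and the divisor classes inject into $H^2$. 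Hence every $\iota^\ast\delta_j$ is a multiple of $[\Theta]$, as required.

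The main obstacle I anticipate is the first step, namely identifying $[\widetilde{W^{r,\mathcal{L}}_{2,2(g-1)}}]$ with the honest restriction $\iota^\ast[\widetilde{W^r_{2,2(g-1)}}]$: one must control the a priori excess intersection of the Brill--Noether locus with the fibre $\mathcal{SU}_C(2,\mathcal{L})$, which is why it is cleanest to run the argument equivariantly on the smooth Quot scheme $\mathcal{R}$ rather than on the singular coarse spaces. A secondary, purely technical, point is pinning down the precise isogeny and index factors in matching the descended Prym polarisation with $[\Theta]$; these affect only the rational coefficients in $P$ and not the statement.
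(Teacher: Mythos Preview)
Your proposal is correct and follows essentially the same route as the paper: restrict along the inclusion $j=\iota:\mathcal{SU}_C(2,\mathcal{L})\hookrightarrow\mathcal{U}_C(2,2(g-1))$, apply Theorem \ref{MT} so that the Brill--Noether class pulls back to a polynomial in divisor classes, and then use that $\mathrm{Pic}(\mathcal{SU}_C(2,\mathcal{L}))$ is cyclic with generator $\Theta$. The paper is more direct at the last step --- it invokes Proposition \ref{Picequiv} (together with \cite{DN}) to conclude immediately that every divisor class on $\mathcal{SU}_C(2,\mathcal{L})$ is a rational multiple of $[\Theta]$, so your third paragraph tracing the Prym/Jacobian decomposition through the isogeny is superfluous (though not incorrect); conversely, the paper does not pause over the excess-intersection issue you flag, and your equivariant justification on $\mathcal{R}$ is the right way to make that restriction step honest.
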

\begin{proof}
 Consider the inclusion:
$$
j: \mathcal{SU}_C(2, \mathcal{L}) \hookrightarrow \mathcal{U}_C(2,2(g-1)).
$$
The pullback map on the cohomology ring
$$
j^*: H^*(\mathcal{U}_C(2,2(g-1),\mathbb{Q}) \rightarrow  H^*(\mathcal{SU}_C(2,\mathcal{L}),\mathbb{Q})
$$ 
is a ring homomorphism. By Theorem \ref{MT},  the cohomology class of  the Brill-Noether locus is a polynomial expression on the divisor classes on $\mathcal{SU}_C(2,\mathcal{L})$. 
Using Proposition \ref{Picequiv}, we know that the Picard group of $\mathcal{SU}_C(2,\mathcal{L})$ is generated by the Theta divisor $\Theta$. This gives the relation, for any irreducible component:
$$
[\widetilde{W^{r,\mathcal{L}}_{2,2(g-1)}}]\, =\, \alpha. [\Theta]^{t(r)} \,\in\, H^*(\mathcal{SU}_C(2,\mathcal{L}),\mathbb{Q})
$$
for some $\alpha \in \mathbb{Q}$ and $t(r)$ is the codimension of an irreducible component of the Brill-Noether locus.

\end{proof}

\section*{Acknowledgements}  This is a part of the doctoral work of the third named author AM, and was done during his visits in 2019 at IMSc.  He was supported by the `Complex and Algebraic geometry' DAE-project at IMSc and by University Grants Commission (UGC) (ID - 424860) at University of Hyderabad.  Currently, he is supported by Indian Institute of Technology Madras (Office order No.F.ARU/R10/IPDF/2024).  The authors thank D. Arapura for pointing out his work (cf. \cite{Ar}), and for useful communications. The authors are grateful to P. Newstead for his remarks, for sharing his thoughts on the previous versions,  pointing out errors/mistakes and pointing out his related works.  The authors are grateful to the anonymous referee for many valuable suggestions, which have greatly improved the content and presentation of the paper.

\section*{Declaration of Interests}
The authors declare that there is no potential competing interest.

\end{document}